\newcounter{my_enumerate_counter}
\newcommand{\pushcounter}{\setcounter{my_enumerate_counter}{\value{enumi}}}
\newcommand{\popcounter}{\setcounter{enumi}{\value{my_enumerate_counter}}}
\DeclareMathOperator{\Aut}{Aut}
\DeclareMathOperator{\id}{id}
\newcommand{\cU}{\mathcal U}
\newcommand{\cO}{\mathcal O}
\newcommand{\bt}{\mathbf t}
\newcommand{\bbF}{{\mathbb F}}
\newcommand{\bbZ}{{\mathbb Z}}
\newcommand{\bbT}{\mathbb T}
\newcommand{\bbN}{{\mathbb N}}
\newcommand{\bbR}{\mathbb R}
\newcommand{\cZ}{{\mathcal Z}}
\newcommand{\rs}{\restriction}
\newcommand{\cF}{\mathcal F}
\newcommand{\e}{\varepsilon}
\newtheorem{thm}{Theorem}[section]
\newtheorem{theorem}{Theorem}
\newtheorem{coro}[thm]{Corollary}
\newtheorem{lemma}[thm]{Lemma}
\newtheorem{prop}[thm]{Proposition}
\DeclareMathOperator{\dist}{dist}
\theoremstyle{definition}
\newtheorem{definition}[thm]{Definition}
\newtheorem{example}[thm]{Example}
\newcommand{\cP}{\mathcal P} 
\DeclareMathOperator{\Th}{Th}
\DeclareMathOperator{\ThA}{Th_\forall}
\newcommand{\cstar}{$\mathrm{C}^*$}
\DeclareMathOperator{\Ad}{Ad}
\newcommand{\asympt}[1]{\ell_\infty(#1)/c_0(#1)}
\newcommand{\asymptot}[2]{\ell_\infty(#1)/c_{#2}(#1)}
\newcommand{\Dsat}{$D$-saturated}
\newcommand{\pDa}{potentially $D$-absorbing} 
\newcommand{\pBa}{potentially $B$-absorbing}
\title{Relative commutants of strongly self-absorbing \cstar-algebras}
\author{Ilijas Farah}
\address{Department of Mathematics and Statistics\\
York University\\
4700 Keele Street\\
North York, Ontario\\ Canada, M3J 1P3\\
and Matematicki Institut, Kneza Mihaila 35, Belgrade, Serbia}
\urladdr{http://www.math.yorku.ca/$\sim$ifarah}
\email{ifarah@mathstat.yorku.ca}
\author{Bradd Hart}
\address{Department of Mathematics and Statistics\\
McMaster University\\ 1280 Main Street\\ West Hamilton, Ontario\\
Canada L8S 4K1}
\email{hartb@mcmaster.ca}
\urladdr{http://www.math.mcmaster.ca/$\sim$bradd/}
\author{Mikael R{\o}rdam}
\address{
Department of Mathematics\\
University of Copenhagen\\
 Universitetsparken~5, 2100 Copenhagen, Denmark}
 \email{rordam@math.ku.dk}
\urladdr{http://www.math.ku.dk/~rordam/}
\author{Aaron Tikuisis}
\address{
Institute of Mathematics\\
University of Aberdeen\\
Aberdeen\\
United Kingdom}
\urladdr{http://homepages.abdn.ac.uk/a.tikuisis}
\email{a.tikuisis@abdn.ac.uk}
\date{\today}
\thanks{AT was supported by an NSERC PDF. 
MR was supported  by the Danish National Research Foundation (DNRF) through the Centre for Symmetry and Deformation at University of Copenhagen, and The Danish Council for Independent Research, Natural Sciences.
IF and BH were partially supported by NSERC}
\thanks{We are indebted to Leonel Robert for proving (2) of  Theorem~\ref{T0} and giving a large number of very useful remarks.}
\begin{document}
\begin{abstract} 
The relative commutant $A'\cap A^{\cU}$ of a strongly self-absorbing algebra $A$  
is indistinguishable from 
its ultrapower~$A^{\cU}$. This applies both to the case when $A$ is the hyperfinite II$_1$ factor and to the case when it is a   strongly self-absorbing 
 \cstar-algebra. 
In the latter case we prove analogous results for $\ell_\infty(A)/c_0(A)$ and reduced powers corresponding to other filters on~$\bbN$. 
Examples of algebras with approximately inner flip  and approximately inner half-flip are provided, showing the optimality of our results.  
 We also prove that strongly self-absorbing algebras are smoothly classifiable,  
unlike the algebras with  approximately inner half-flip. 
\end{abstract} 
\maketitle
Most uses of ultrapowers in operator algebras and elsewhere 
rely on two of their model-theoretic properties: countable saturation and \L o\'s' theorem, stating that the canonical embedding of a structure in its ultrapower is elementary.\footnote{All ultrafilters are assumed to be nonprincipal ultrafilters on $\bbN$ and $A^{\cU}$ denotes the ultrapower of 
$A$ associated with $\cU$.}
These conditions -- saturation and elementary equivalence -- characterize the ultrapower under the assumption of the continuum hypothesis: two elementary extensions of density character $\aleph_1$ of a separable structure $A$ which are saturated are necessarily isomorphic over $A$. This is the uniqueness theorem 
for saturated models; see \S\ref{S.Saturation} and in particular Theorem \ref{T.Keisler}.
For an arbitrary metric structure $A$, we can identify $A$ with its diagonal image in an ultrapower $A^{\cU}$ and provided that $A$ has a multiplicative structure consider the 
\emph{central sequence algebra} (also known as the \emph{relative commutant} of $A$ in its ultrapower) 
\[
A'\cap A^{\cU}:=\{b\in A^{\cU}: ab=ba\text{ for all }a\in A\}. 
\]
When $A$ is a \cstar-algebra, the relevant notion of ultraproduct is the \cstar-algebra ultraproduct and when $A$ is a II$_1$ factor and the notion of ultraproduct is the tracial ultraproduct of von Neumann algebras.
When $A$ is a unital \cstar-algebra, the central sequence algebra is denoted $F(A)$ by Kirchberg in \cite{Kirc:Central}
 (suppressing the choice of ultrafiler $\cU$). Kirchberg also defines $F(A)$ for non-unital \cstar-algebras in such a way that Theorem~\ref{T.RC} 
 extends to the case where $A$ is non-unital.  The relative commutant (and  the central sequence algebra in particular)
 plays an even more significant role in classification of both II$_1$ factors
and \cstar-algebras than the ultrapower itself 
 (\cite{Ror:Classification}, 
 \cite{McDuff:Central}, \cite{Connes:Class}, \cite{KircPhi:Embedding}, \cite{Kirc:Central}, \cite{Phi:Classification}, \cite{MaSa:Strict}, 
 \cite{kirchberg2014central}\dots). 
 Relative commutants do not satisfy the standard form 
 of \L o\'s' theorem since $A$ is not even a subalgebra of $A'\cap A^{\cU}$ 
 and are in general only known to be quantifier-free saturated (see \S \ref{S.Saturation}).
The latter property is closely related to Kirchberg's $\e$-test (see \cite[Lemma~A.1]{Kirc:Central} or  \cite[Lemma~3.1]{KirRo:Central}). 
Moreover,  unlike the ultrapower, the construction of a relative commutant
 does not have an analogue in general model theoretic terms. 
We aim to show that the central sequence algebra of strongly self-absorbing algebras is no different from its  ultrapower. 


For a \cstar-algebra A and a unital \cstar-algebra B, two $^\ast$-homomorphisms $\Phi$ and $\Psi$ from $A$ to $B$
are \emph{approximately unitarily equivalent} if there is 
a net of unitaries $u_\lambda$ in $B$ such that $\Ad u_\lambda \circ \Phi$ converges to $\Psi$ in the point-norm topology. 
An endomorphism of a \cstar-algebra is \emph{approximately inner} if it is approximately unitarily equivalent to the identity map.
Let $D$ be a separable \cstar-algebra. Recall the following definitions from 
\cite{effros1978c} and 
\cite{ToWi:Strongly}.

\begin{enumerate}
\item  $D$ has an \emph{approximately inner flip} if the automorphism of $D\otimes_{\mathrm{min}} D$ 
that interchanges $a\otimes b$ and $b\otimes a$ is approximately inner.\footnote{$\otimes_{\mathrm{min}}$ denotes the minimal, or spatial, tensor product of \cstar-algebras.
In most of this paper, we work with tensor products where one factor is nuclear, and then simply write $\otimes$ (since all \cstar-tensor norms are the same).}

\item   $D$ has an \emph{approximately inner half-flip} if the
maps $\id_D\otimes 1_D$ and $1_D\otimes \id_D$ from $D$ into $D\otimes_{\mathrm{min}} D$ 
are approximately unitarily equivalent. 
\item  A \cstar-algebra $A$ is \emph{$D$-absorbing} if $A\otimes_{\mathrm{min}} D\cong A$.
\item   $D$ is \emph{strongly self absorbing}  if $D$ is $D$-absorbing and
the map $\id_D\otimes 1_D$ is approximately unitarily equivalent to an isomorphism 
between $D$ and $D\otimes_{\mathrm{min}} D$. 
\end{enumerate}

If  $D$ is strongly self-absorbing then it has an approximately inner flip (\cite{ToWi:Strongly}), and
if $D$ has an approximately inner flip then it clearly has an approximately inner half-flip. 
Converses to both of these implications are well-known to be false (see Example~\ref{Ex.half-not-full}). 
The weakest of these properties implies that $D$ is nuclear, simple, unital, and has 
at most one trace (\cite{effros1978c}).
If $D$ has approximately inner half-flip then $D^{\otimes \infty}$ is strongly self-absorbing (\cite[Proposition~1.9]{ToWi:Strongly}). 

We now state Theorem \ref{T0} in its weaker form.  The result will be strengthened later.

 \begin{theorem} \label{T0}
Assume $D$ has approximately inner half-flip and $A$ is a separable, $D$-absorbing, unital \cstar-algebra. 
Then the continuum hypothesis (CH) implies the following. 
\begin{enumerate}
\item If $\Phi\colon D\to A^{\cU}$ is a unital $^\ast$-homomorphism then $\Phi(D)'\cap A^{\cU}$ is isomorphic to $A^{\cU}$. 
\item If $\Phi\colon D\to \asympt A$ is a unital $^\ast$-homomorphism then $\Phi(D)'\cap \asympt A$ 
is isomorphic to $\asympt A$. 
\end{enumerate}
Moreover, in both cases, the inclusion maps are approximately unitarily equivalent to isomorphisms.
\end{theorem}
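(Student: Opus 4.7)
My plan is to reduce both (1) and (2), under CH, to the uniqueness theorem for saturated metric structures (Theorem~\ref{T.Keisler}), which asserts that two elementarily equivalent countably saturated metric structures of density character $\aleph_1$ are isomorphic over any shared separable elementary substructure. I will write out~(1); the argument for~(2) is parallel, using that $\asympt A$ is also countably saturated of density character $\aleph_1$ under CH. As a first step, since $A$ is $D$-absorbing and $D$ has approximately inner half-flip, the Toms--Winter uniqueness theorem (\cite{ToWi:Strongly}) implies that any two unital $^\ast$-homomorphisms $D\to A^{\cU}$ are approximately unitarily equivalent; countable saturation of $A^{\cU}$ together with separability of $D$ then upgrade this to inner conjugacy by a unitary in $A^{\cU}$. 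Thus I may assume throughout that $\Phi$ is the canonical embedding $\Phi_0(d)=1\otimes d$ associated to a fixed isomorphism $A\cong A\otimes D^{\otimes\infty}$ placing $\Phi_0(D)$ in the first tensor factor.

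Under this identification, the map $\iota\colon A\to\Phi_0(D)'\cap A^{\cU}$ defined by $a\mapsto a\otimes 1_{D^{\otimes\infty}}$ (followed by the diagonal embedding) lands in the relative commutant, while the tensor factors at positions $\geq 2$ remain as spare copies of $D$ that centralize $\Phi_0(D)$. The two facts I need are: (i) $\iota$ is an elementary embedding; and (ii) $\Phi_0(D)'\cap A^{\cU}$ is countably saturated. Given (i) and (ii), both $A^{\cU}$ and $\Phi_0(D)'\cap A^{\cU}$ become countably saturated elementary extensions of $A$ of density character $\aleph_1$, and Theorem~\ref{T.Keisler} produces an isomorphism between them that is the identity on $A$.

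Point (i) should follow from the approximately inner half-flip combined with $D$-absorption: given a formula $\varphi(\bar x)$ and a tuple $\bar a$ in $A$, sequences of inner automorphisms of $A^{\cU}$ arising from the half-flip can be used to move $\iota(\bar a)$ onto the diagonal image of $\bar a$ while approximately preserving $\varphi$. Point (ii) is the main obstacle, since the paper observes that relative commutants are in general only quantifier-free saturated. Here I would apply Kirchberg's $\varepsilon$-test (\cite[Lemma~A.1]{Kirc:Central}): a countable type $p(\bar x)$ over parameters in $\Phi_0(D)'\cap A^{\cU}$ has finite approximations realized in $A^{\cU}$ by full saturation, and the spare $D^{\otimes\infty}$ factors together with the half-flip can be used to perturb each approximate realizer into the centralizer of $\Phi_0(D)$ at arbitrarily small cost; a diagonal reindexing produces an exact realizer inside $\Phi_0(D)'\cap A^{\cU}$.

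For the ``moreover'' clause, the back-and-forth construction underlying Theorem~\ref{T.Keisler} should be arranged so that at stage $n$ the partial isomorphism agrees, to within $2^{-n}$ on the first $n$ elements of fixed dense sequences on both sides, with conjugation by some unitary $u_n\in A^{\cU}$; such $u_n$ exist by the first paragraph together with countable saturation. The resulting isomorphism is then approximately unitarily equivalent to the inclusion $\Phi(D)'\cap A^{\cU}\hookrightarrow A^{\cU}$, completing the proof.
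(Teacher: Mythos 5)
Your opening reduction is the first genuine gap, and it is fatal as written. You claim that because $A$ is $D$-absorbing and $D$ has approximately inner half-flip, any two unital $^\ast$-homomorphisms $D\to A^{\cU}$ are approximately unitarily equivalent, and that one may fix an isomorphism $A\cong A\otimes D^{\otimes\infty}$ and take $\Phi$ to be a tensor-factor embedding. Neither assertion follows from these hypotheses: the paper's own Example~\ref{Ex.O-infty} (take $D=A=p\cO_\infty p$ in standard form) is $D$-absorbing and has approximately inner flip, yet composing the diagonal embedding with a $K_0$-reversing automorphism produces two unital $^\ast$-homomorphisms $D\to D^{\cU}$ that are not (even approximately) unitarily conjugate; moreover $D\otimes D^{\otimes\infty}\cong\cO_2\not\cong D$, so $D$-absorption does not yield $D^{\otimes\infty}$-absorption. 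What the uniqueness statement really requires (Lemma~\ref{L.flip}) is a copy of $D$ commuting with the images, i.e.\ that $A^{\cU}$ be \Dsat{} --- equivalently, countably saturated and \pDa{} --- and this condition ($A\models\bbT_D$) is strictly stronger than $A\cong A\otimes D$ when $D$ is not strongly self-absorbing. This is exactly the hypothesis under which the paper's Corollary~\ref{T0+} operates, and it is where the real content of the argument sits; a proof that starts from ``Toms--Winter uniqueness'' applied to a merely $D$-absorbing $A$ cannot get off the ground.

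Even granting the corrected hypothesis, your two pillars (i) and (ii) are precisely the theorems the paper has to prove, and your sketches do not establish them. For (i), the inner automorphisms of $A^{\cU}$ you invoke are not automorphisms of $\Phi(D)'\cap A^{\cU}$, so they do not preserve values of formulas whose quantifiers range over the relative commutant; elementarity here is the content of Theorem~\ref{T1}, proved via an automorphism form of the Tarski--Vaught test (Lemma~\ref{L2}) in which the needed automorphisms are produced by Lemma~\ref{L.flip} applied inside $S'\cap C$. For (ii), Kirchberg's $\varepsilon$-test gives only quantifier-free saturation of the relative commutant (the paper stresses that relative commutants are in general only known to be quantifier-free saturated); perturbing an approximate realizer into the commutant controls quantifier-free conditions but not $\sup/\inf$ over the commutant, and the paper obtains full countable saturation (Lemma~\ref{L.saturatedPre}, Corollary~\ref{C.saturated}) as a \emph{consequence} of the elementarity $D'\cap C\prec C$, not independently of it. Finally, for the ``moreover'' clause, an isomorphism coming from Theorem~\ref{T.Keisler} is not automatically approximately unitarily equivalent to the inclusion: the paper gets this from the transfinite one-sided intertwining of Theorem~\ref{thm:Intertwining}, whose engine is the fact that for every separable $S\subseteq D'\cap C$ and $b\in C$ there is a unitary $u\in S'\cap C$ with $ubu^*\in D'\cap C$ (again Lemma~\ref{L.flip}), with countable saturation used at the limit stages of an $\omega_1$-length construction; your $2^{-n}$ back-and-forth neither says where such unitaries come from nor how limit stages are passed. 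In short, the Keisler-uniqueness strategy over $A$ could be carried out, but each ingredient you take for granted --- uniqueness of the embedding, elementarity of the commutant, and its countable saturation --- fails or is unproved at the indicated points.
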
 

Theorem~\ref{T0} is a  consequence of  Corollary~\ref{T0+}, 
stating that, assuming CH, $\asymptot A\cF$ and $A'\cap \asymptot A\cF$ are 
 isomorphic for a number of free  filters $\cF$. 
 Our second result, Theorem~\ref{T-1},  does not require CH or any other 
 additional set-theoretic assumptions. 
 It is stated in terms of logic of metric structures (\cite{BYBHU}) and its adaptation to \cstar-algebras (\cite{FaHaSh:Model2}). 
 Some acquaintance with this logic is expected from the reader; \S\ref{S.Prelim} contains a brief  introduction and 
all the relevant definitions can be found in \cite{FaHaSh:Model2}.  

We say that $C$ is \pDa{} if $C$ is elementarily equivalent to a $D$-absorbing \cstar-algebra
 (see \S\ref{S.pDa}).  If $A$ is potentially $D$-absorbing then $A^\cF$ is potentially $D$-absorbing 
for every filter $\cF$ (see Proposition \ref{T.AF}).

 \begin{theorem}\label{T-1} 
 Assume $C$ and $D$ are unital \cstar-algebras, 
  $D$ has approximately inner half-flip and  $C$ is countably saturated and  \pDa. 
 Then:
 \begin{enumerate}
\item All unital $^\ast$-homomorphisms of $D$ into $C$ are unitarily conjugate. 
\item Fixing an inclusion of $D$ in $C$, we have that $D'\cap C$ is an elementary submodel of $C$, and 
\item $D'\cap C$ is countably saturated. 
\end{enumerate}
\end{theorem}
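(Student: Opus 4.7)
The plan is to prove part (3) first using an augmented-type argument, then (1) by combining countable saturation with elementary-equivalence transfer, and finally (2) by applying (1) inside the relative commutant of $\bar b$.

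For part (3), let $p(x)$ be a countable type over countable parameters $\bar b \subset D' \cap C$ that is finitely satisfiable in $D' \cap C$. Fix a countable norm-dense $D_0 \subset D$ (using separability of $D$) and form the augmented type $p^*(x) := p(x) \cup \{\|[x,d]\| \le 0 : d \in D_0\}$, still a countable type over countable parameters in $C$. Any finite fragment of $p^*$ is the union of a finite fragment of $p$ and finitely many commutation conditions; the $p$-part is realized by some $c \in D' \cap C$ by hypothesis, and this $c$ automatically satisfies the commutation conditions. By countable saturation of $C$, $p^*$ is realized by some $c \in C$ which, commuting with $D_0$, lies in $D' \cap C$ and realizes $p$.

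For part (1), given unital $^\ast$-homomorphisms $\Phi, \Psi \colon D \to C$ and a countable dense $D_0 \subset D$, by countable saturation it suffices to realize the type
\[
t(u) := \{u^*u = uu^* = 1\} \cup \{u\Phi(d)u^* = \Psi(d) : d \in D_0\},
\]
and finite satisfiability reduces to the first-order property (encoded via Kirchberg's $\epsilon$-test): for every finite $F \subset D$ and every $\epsilon > 0$, there is a unitary $v \in C$ with $\|v\Phi(d)v^* - \Psi(d)\| < \epsilon$ for $d \in F$. This transfers by elementary equivalence from any $D$-absorbing $C''$ with $C \equiv C''$. In $C'' \cong C'' \otimes D$, the approximately inner half-flip of $D$ implies every unital $^\ast$-homomorphism $D \to C''$ is approximately unitarily equivalent to the second-factor embedding $d \mapsto 1_{C''} \otimes d$, so any two are approximately unitarily equivalent.

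For part (2), use the Tarski-Vaught test: given a formula $\psi(x, \bar y)$ and $\bar b \in D' \cap C$, for each $c \in C$ we produce $c' \in D' \cap C$ with $\tp^C(c'/\bar b) = \tp^C(c/\bar b)$. Apply (1) inside the subalgebra $C_{\bar b} := \{\bar b\}' \cap C$: this is countably saturated by the argument of (3) applied with the finitely many commutation conditions for $\bar b$, and is potentially $D$-absorbing by a structural lemma on commutants in $D$-absorbing algebras. Construct a unital $^\ast$-homomorphism $\tilde\Phi \colon D \to \{c\}' \cap C_{\bar b}$ by countable saturation of $C$, with finite satisfiability coming from the existence, in a $D$-absorbing elementary equivalent $C''$, of an auxiliary copy of $D$ commuting with any prescribed finite tuple (using that $C''$ absorbs $D^{\otimes n}$ for all $n$). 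Then (1) applied inside $C_{\bar b}$ to $\Phi$ and $\tilde\Phi$ yields a unitary $u \in C_{\bar b}$ with $u\Phi(d)u^* = \tilde\Phi(d)$; set $c' := u^*cu$. Since $\tilde\Phi(D)$ commutes with $c$, $\Phi(D) = u^*\tilde\Phi(D)u$ commutes with $c'$, so $c' \in D' \cap C$; and since $u \in C_{\bar b}$ commutes with $\bar b$, $\phi(c', \bar b) = \phi(u^*cu, u^*\bar b u) = \phi(c, \bar b)$ for every formula $\phi$. The main obstacle in this plan is the structural lemma asserting that $C_{\bar b}$ is potentially $D$-absorbing; this requires exploiting the approximately inner half-flip of $D$ to manipulate tensor factors of $D$ inside the commutant.
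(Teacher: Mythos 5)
There are genuine gaps, and the most serious one is the order of your argument. Your proof of (3) "first" uses the augmented type $p^*=p\cup\{\|[x,d]\|:d\in D_0\}$, but $p$ is a type whose formulas may contain quantifiers, and "finitely satisfiable in $D'\cap C$" means the formulas are evaluated \emph{in} $D'\cap C$. Without already knowing $D'\cap C\prec C$, a finite fragment realized by $c\in D'\cap C$ in the sense of $D'\cap C$ need not be approximately realized in the sense of $C$, so you cannot invoke countable saturation of $C$; and even after realizing $p^*$ in $C$ by some $c\in D'\cap C$, you only know $\varphi^C(c,\bar b)$ is small, not $\varphi^{D'\cap C}(c,\bar b)$. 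For quantifier-free types the argument is fine (that is the known quantifier-free saturation of relative commutants), but full saturation genuinely requires elementarity: this is exactly why the paper proves (2) first (Theorem \ref{T1}) and then deduces (3) from it via Lemma \ref{L.saturatedPre}, whose hypothesis is $D'\cap B\prec B$. The same flaw reappears in your step (2), where you claim $\{\bar b\}'\cap C$ is countably saturated "by the argument of (3)"; in fact the paper explicitly notes one does not expect full saturation of such commutants, and only quantifier-free saturation is available — which, fortunately, is all that is needed there.

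The second gap is in (1): the statement you want to transfer — that the \emph{specific} maps $\Phi,\Psi$ into $C$ are approximately unitarily equivalent — has parameters $\Phi(d),\Psi(d)\in C$ and is not a sentence, so it does not "transfer by elementary equivalence" from a $D$-absorbing $C''\equiv C$; turning it into sentences would require a uniform stability/perturbation statement for approximate copies of $D$ that you do not have. The correct route stays inside $C$: countable saturation plus $C\models\bbT_D$ makes $C$ \Dsat{} (Lemma \ref{L.2.5}), so there is a unital copy of $D$ in $(\Phi(D)\cup\Psi(D))'\cap C$; the half-flip unitaries, pushed into $C$, give approximate unitary equivalence of each $\Phi_i$ with this commuting copy, and quantifier-free saturation upgrades this to exact conjugacy (Lemma \ref{L.flip}). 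Finally, in (2) your admitted "main obstacle" — that $\{\bar b\}'\cap C$ is \pDa{} — is an unnecessary and unproven detour: all you need is that every separable subset $X$ of $\{\bar b\}'\cap C$ admits a commuting unital copy of $D$, which is immediate from $D$-saturation of $C$ since $X'\cap(\{\bar b\}'\cap C)=(X\cup\{\bar b\})'\cap C$, together with quantifier-free saturation of that commutant. With those repairs your construction of $u\in\{\bar b\}'\cap C$ and $c':=u^*cu$ is exactly the paper's argument (Theorem \ref{T1} via Lemmas \ref{L.flip} and \ref{L2}), but as written the proposal does not close.
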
 

If the  continuum hypothesis holds and $C$ has density character $\aleph_1$ then items (2) and (3) of Theorem~\ref{T-1} 
imply $D'\cap C\cong C$ by a transfinite back-and-forth argument.  

 The natural intuition that an ultrapower $A^{\cU}$ of a $D$-absorbing \cstar-algebra $A$ 
is $D$-absorbing is wrong because countably quantifier-free saturated \cstar-algebras
 are, by  \cite{Gha:SAW*},  tensorially indecomposable. 
Since ultrapowers of $D$-absorbing \cstar-algebras are \pDa{} (Proposition~\ref{T.AF})
Theorem~\ref{T-1}  morally justifies this intuition.

\begin{proof}[Proof of Theorem~\ref{T-1}] 
Item (1) was  essentially proved in \cite{effros1978c}, see 
   Lemma~\ref{L.flip}. 
(2) is Theorem~\ref{T1}. 
The quantifier-free case of (3) is proved in \cite{FaHa:Countable}
and the general case is Corollary~\ref{C.saturated}. 
\end{proof} 
  
 The proofs of Theorem~\ref{T0}  and Theorem~\ref{T-1} 
 apply in the case of II$_1$ factors where, by a result of Connes, 
 the only strongly self-absorbing example is the hyperfinite II$_1$ factor $R$. In the following 
 the ultrapower is the tracial von Neumann ultrapower. 
  
 \begin{theorem} \label{T0.II1} 
 For any McDuff factor $M$, given an embedding of $R$ into $M^\cU$, we have $R'\cap M^\cU$ is countably saturated and
 $R'\cap M^{\cU}$ is an elementary submodel of $M^\cU$. If CH holds then $R' \cap M^\cU$ is isomorphic to $M^\cU$.
 
 If $M$ denotes the von Neumann subalgebra of $R^{\cU}$ generated by $R$ and $R'\cap R^{\cU}$ then 
 $R'\cap R^{\cU}$ is an elementary submodel of $M$ which itself is an elementary submodel of $R^{\cU}$. 
 \end{theorem}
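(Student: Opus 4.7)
The plan is to adapt the proofs of Theorems~\ref{T0} and~\ref{T-1} to the tracial ultrapower setting for II$_1$ factors. The role of being $D$-absorbing is played by being McDuff (that is, $M \cong M \bar\otimes R$), and that of being potentially $D$-absorbing by being elementarily equivalent to a McDuff factor. The hyperfinite II$_1$ factor $R$ plays the role of $D$: it has approximately inner flip in the $\|\cdot\|_2$ topology. For $M$ McDuff, the ultrapower $M^\cU$ is countably saturated and is itself McDuff, so the II$_1$-analogue of Theorem~\ref{T-1} applies with $C = M^\cU$ and $D = R$, yielding that $R' \cap M^\cU$ is countably saturated and is an elementary submodel of $M^\cU$. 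Under CH, both $M^\cU$ and $R' \cap M^\cU$ have density character $\aleph_1$, are countably saturated, and are elementarily equivalent, so by the uniqueness of saturated models (Theorem~\ref{T.Keisler}) they are isomorphic.

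For the final assertion, set $N := R' \cap R^\cU$; applying the preceding to $M = R$ gives $N \prec R^\cU$. Since $N \subseteq M \subseteq R^\cU$, for any formula $\phi(x, \bar y)$ and any $\bar a \in N$ one has
\[
\inf_{x \in N} \phi(x, \bar a) \;\geq\; \inf_{x \in M} \phi(x, \bar a) \;\geq\; \inf_{x \in R^\cU} \phi(x, \bar a) \;=\; \inf_{x \in N} \phi(x, \bar a),
\]
so equality holds throughout and by the Tarski--Vaught test $N \prec M$.

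The harder part is to show $M \prec R^\cU$. The plan is to use the approximately inner flip of $R$ to show that every tuple from $M$ is, within arbitrary tolerance, an inner $R^\cU$-conjugate of a tuple from $N$. Since $R$ is McDuff, $N$ contains a copy $R_1$ of $R$, and $R \vee R_1 \subseteq M$ is a copy of $R \bar\otimes R$; the approximately inner flip furnishes unitaries in $R \vee R_1 \subseteq M$ that approximately conjugate $R$ onto $R_1 \subseteq N$. Combined with the identification $M \cong R \bar\otimes N$ (using that $R$ is injective and commutes with $N$ inside $R^\cU$), this should allow one to approximately conjugate any tuple from $M$ into $N$ by an inner unitary of $R^\cU$. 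Since inner automorphisms of $R^\cU$ preserve formula values and $N \prec R^\cU$, one then obtains $\inf_{x \in M} \phi(x, \bar a) = \inf_{x \in R^\cU} \phi(x, \bar a)$ for $\bar a \in M$, so that $M \prec R^\cU$ by the Tarski--Vaught test. The main obstacle is carrying out this conjugation uniformly with respect to the formula $\phi$ and the tuple $\bar a$, which is the II$_1$ analogue of the argument in Theorem~\ref{T1}.
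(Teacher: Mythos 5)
Your route is the paper's route: Theorem~\ref{T0.II1} is obtained there precisely by observing that the proofs of Theorems~\ref{T0} and~\ref{T-1} (whose key lemmas, \ref{L.El.1}, \ref{L2+} and \ref{L2}, are stated and proved for arbitrary metric structures) go through for tracial von Neumann algebras with $D=R$ and ``$D$-absorbing'' replaced by ``McDuff''. Three corrections to your write-up. First, do not assert that $M^\cU$ is itself McDuff: all you need, and all you get from \L o\'s' theorem, is that $M^\cU\equiv M$ with $M$ McDuff, i.e.\ the analogue of being \pDa{}; the corresponding stronger claim for \cstar-algebras is actually false by \cite{Gha:SAW*}, and in the tracial setting it is neither needed nor established. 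Second, your Tarski--Vaught chain for $N\prec M$ (where $N=R'\cap R^\cU$) computes all interpretations in $R^\cU$, whereas the test for $N\prec M$ requires them computed in $M$; as written this step presupposes what you call the harder part. It is also redundant: once $N\prec R^\cU$ and $M\prec R^\cU$ are known, $N\prec M$ follows from the fact recorded in \S\ref{S.Prelim} that $A\prec C$ and $B\prec C$ with $A\subseteq B$ imply $A\prec B$ --- exactly how Theorem~\ref{T1} concludes. Third, the ``main obstacle'' you flag (uniformity over formulas and tuples) is already dealt with by Lemma~\ref{L.El.1}, applied with the roles of $(C,A,B)$ played by $(N,M,R^\cU)$: since $M$ is generated by $R\cup N$ and interpretations of formulas are uniformly $\|\cdot\|_2$-continuous on bounded sets, by Kaplansky density it suffices to treat tuples $(r_1,\dots,r_k,c_1,\dots,c_\ell)$ with $r_i\in R$ and $c_i\in N$. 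Countable saturation of $R^\cU$ together with $R^\cU\equiv M$ McDuff gives a copy $R_1$ of $R$ inside $N$ commuting with the $c_i$; since $R$ is a factor, the trace is multiplicative on commuting products, so the von Neumann algebra generated by $R$ and $R_1$ is $R\overline{\otimes}R_1$, and the unitaries approximately implementing the half-flip lie in it, hence in $M$, and commute with the $c_i$. The resulting inner automorphisms of $R^\cU$ restrict to automorphisms of $M$, fix the $c_i$, and move the $r_i$ arbitrarily close to $R_1\subseteq N$, which is the hypothesis of Lemma~\ref{L.El.1}; in particular the identification $M\cong R\overline{\otimes}N$ is not needed. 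With these adjustments your argument is the intended one.
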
 

A topological dynamical system associated to unitary equivalence classes of $^\ast$-homomorphisms from 
a  \cstar-algebra $A$ into an ultrapower of a \cstar-algebra $B$ was 
introduced by Brown in \cite{Bro:Topological}. Much is known about such systems in case of II$_1$ factors
and some of our results can be recast in Brown's terminology to give information about such dynamical 
systems when $A$ is strongly self-absorbing and $B$ is $A$-absorbing.

 We also give a new characterization of strongly self-absorbing algebras as those unital \cstar-algebras $D$
 such that all unital $^\ast$-homomorphisms of $D$ into its ultrapower are conjugate
 and $D$ is elementarily equivalent to $D\otimes D$ (Theorem~\ref{T.ssa}).

\subsection*{Organization of the paper} 
In \S\ref{S.Prelim} we review the basics of continuous model theory and countable saturation. 
\S\ref{S.criteria} is about arbitrary metric structures and 
in it we prove  some continuous variants of the Tarski--Vaught criterion for elementarity. 
The main results of the paper, including Theorem~\ref{T-1} and a strengthening of Theorem~\ref{T0},  are proved in 
  \S\ref{S.embedding}. 
The model-theoretic characterization of strongly self-absorbing algebras mentioned above 
is given in \S\ref{S.characterization}. In  \S\ref{S.classification}
we show that (unlike the class of all separable, nuclear, simple, unital \cstar-algebras) 
strongly self-absorbing algebras 
are smoothly classifiable by their universal theories, while \cstar-algebras with approximately inner half-flip\ are not. 
  Several limiting examples are given in \S\ref{S.L1}.

\section{Preliminaries} \label{S.Prelim}

\subsection{Logic of \cstar-algebras} 
  We quickly review  basic notions from the logic of metric structures 
  (\cite{BYBHU}) as modified for \cstar-algebras 
  (proofs and more details can be found in~\cite{FaHaSh:Model2} or \cite{Muenster}). 
\subsubsection{Formulas, theories, elementary submodels} \label{S.Formulas}
The \emph{formulas} for \cstar-al\-geb\-ras are recursively defined as follows.  
\begin{enumerate}
\item Terms are *-polynomials with complex coefficients in noncommuting 
variables $x_n$, for $n\geq 0$. 
\item Atomic formulas are expressions of the form $\|P\|$ where $P$ is a term. 
\item Formulas form the smallest set that includes all atomic formulas and satisfies the following requirements. 
\begin{enumerate}
\item\label{S.Formula.1}  If $n\geq 1$, $\phi_j$ for $0\leq j<n$ are formulas, and $f\colon \bbR^n\to \bbR$ is 
 continuous, then $f(\phi_0, \phi_1, \dots, \phi_{n-1})$ is a formula. 
 \item\label{S.Formula.2} If $\phi$ is a formula and $x$ is $x_j$ for some $j\geq 0$, then $\sup_{\|x\|\leq 1} \phi$ and $\inf_{\|x\|\leq 1}  \phi$ are formulas. 
 \end{enumerate}  
\pushcounter
 \end{enumerate}  
Formulas in the set containing all atomic formulas and closed only under condition 3(a) are called quantifier-free formulas.

This definition of formula is slightly more restrictive than the one given in \cite{FaHaSh:Model2} in that we consider only the unit ball of the \cstar-algebra as a domain of quantification. This simplification does not alter the expressive power of the logic and it suffices for our present purposes. 

We will frequently use $\bar x$ to denote the $n$-tuple $(x_0,\dots,x_{n-1})$ or $\omega$-tuple $(x_0,x_1,\dots)$ of variables, and likewise $\bar a$ (or $\bar b$, etc.) to denote a finite or countable tuple of elements in a \cstar-algebra $A$.  We will write $\bar a \in A$ to mean that $a_j \in A$ for each $a_j$ in the tuple $\bar a$.
A formula with variables included in the tuple $\bar x$ will be denoted $\phi(\bar x)$ (it is convenient to sometimes allow $\bar x$ to be a countable tuple, even though the definition of a formula only allows finitely many of the variables to be used).

Let $n=0,1,\dots,\omega$.
Given a formula $\phi(\bar x)$, where $\bar x$ is an $n$-tuple, a \cstar-algebra $A$ and an $n$-tuple $\bar a$ in $A$, one can define the interpretation of $\phi(\bar a)$ in $A$ recursively, by following the the structure of $\phi$ as in (1)--(3). 
This interpretation, denoted by $\phi(\bar a)^A$, defines a function $\phi^A\colon A^n\to \bbR$.
This function is uniformly continuous on the set of $n$-tuples in the unit ball of $A$, 
and the modulus of uniform continuity depends only on $\phi$ and not on $A$. 
We denote the set of formulas whose free variables are included in $\bar x$, by $\bbF^{\bar x}$. 

We can expand the notion of formula slightly by allowing parameters.  If $A \subseteq B$, $\varphi(\bar x,\bar y)$ is a formula and $\bar a \in A$ has the same length as $\bar y$ then we call $\varphi(\bar x,\bar a)$ a formula with parameters in $A$.  We denote by $\bbF^{\bar x}_A$ the set of formulas with free variables included in $\bar x$ and parameters in $A$.

A formula is a \emph{sentence} if it has no free variables; we denote the set of sentences as $\bbF^0$.
For a sentence $\phi$ the interpretation $\phi^A$ is a real number. 
Note that  $\bbF^0$ is an $\bbR$-algebra and that for every $A$ the map $\phi\mapsto \phi^A$ is linear.

The \emph{theory} of a \cstar-algebra $A$ is the kernel of this map on $\bbF^0$,
\[
\Th(A):=\{\phi\in \bbF^0: \phi^A=0\}. 
\]
Since $\phi^A=r$ if and only if $|\phi - r| \in \Th(A)$, the theory of $A$ can be identified with the 
linear functional $\phi\mapsto \phi^A$. 
We say that $A$ and $B$ are \emph{elementarily equivalent}, in symbols $A\equiv B$, if $\Th(A)=\Th(B)$. 

If $A$ is a subalgebra of $B$ 
and 
for every $n$-ary formula  $\phi$ the interpretations $\phi^A$ and $\phi^B$ agree on $A^n$ then 
we write $A\prec B$ and  say that $A$ is an \emph{elementary submodel} of $B$ and that $B$ is 
an \emph{elementary extension} of $A$.  
The following easily checked facts will be used tacitly. If 
 $A\subseteq B\subseteq C$ then $A\prec B$ and $B\prec C$ implies $A\prec C$. 
Also $A\prec C$ and $B\prec C$ implies $A\prec B$. (Note, however, that $A\prec C$ and $A\prec B$ does not 
imply $B\prec C$.)
By \L o\'s' theorem an ultrapower is always an elementary extension of $A$, 
$A\prec A^{\cU} $, via the diagonal embedding.

\subsubsection{Types and saturation}\label{S.Saturation}
If $A \subseteq B \prec C$ and $\bar c \in C$ then the complete type of $\bar c$ over $A$ with respect to $\Th(B)$ is the linear functional $p\colon \bbF^{\bar x}_A \to  \bbR$ given by $p(\varphi(\bar x,\bar a)) = \varphi^C(\bar c,\bar a)$.  A \emph{type} over $A$ with respect to  $\Th(B)$ is the restriction of a complete type to a subset of $\bbF^{\bar x}_A$; if the subset contains only quantifier-free formulas then we say that the type is a quantifier-free type.  As a type can be extended by linearity to a subspace of $\bbF^{\bar x}_A$, it is determined by its kernel.  For $\varphi \in \bbF^{\bar x}_A$ and a type $p$, we write $\varphi \in p$ if $p(\varphi) = 0$.  A set of formulas $\Sigma$ with parameters in $A$ is \emph{consistent} with  $\Th(B)$ if for some type $p$ over $A$ with respect to $\Th(B)$, we have $\Sigma \subseteq p$.  The following result, which is used throughout, can be proved by an application of  \L o\'s' theorem.

\begin{prop}Suppose that $A \subseteq B$, both metric structures in the same language, and $\Sigma$ is a set of formulas with parameters in $A$. 
 Then $\Sigma$ is consistent with $\Th(B)$ if and only if every finite subset  $\Sigma_0$ of  $\Sigma$  and $\e >0$, $\Sigma_0$ can be $\e$-approximated in $B$ i.e., there exists $\bar b \in B$ such that $| \varphi^B(\bar b,\bar a)| < \e$ for every $\varphi(\bar x,\bar a) \in \Sigma_0$.
 
 If $A$ is separable then every type over $A$ is realized in $A^\cU$.

\end{prop}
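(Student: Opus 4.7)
The plan is to prove both halves of the proposition by \L o\'s' theorem and an ultrapower-based compactness argument adapted to continuous logic.

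For the forward implication of the equivalence, assume $\Sigma$ is consistent with $\Th(B)$, witnessed by some type $p \supseteq \Sigma$. By definition, $p$ is the restriction of the complete type of a tuple $\bar c$ in some elementary extension $C \succ B$, where $\bar c$ lies in the unit ball of $C$ since formulas only permit bounded quantification. For a finite $\Sigma_0 \subseteq \Sigma$, the expression $\psi(\bar y) := \inf_{\|\bar x\| \leq 1} \max_{\varphi(\bar x,\bar y) \in \Sigma_0} |\varphi(\bar x,\bar y)|$ is itself a formula, and $\psi^C(\bar a) = 0$ because $\bar c$ is a witness. Elementarity $B \prec C$ gives $\psi^B(\bar a) = 0$, which is precisely the required $\e$-approximation in $B$ for any $\e > 0$.

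For the reverse implication, I would construct a realization inside an ultrapower of $B$. Let $I$ be the directed set of pairs $i = (\Sigma_0,\e)$ with $\Sigma_0 \subseteq \Sigma$ finite and $\e > 0$, ordered by $(\Sigma_0,\e) \leq (\Sigma_0',\e')$ iff $\Sigma_0 \subseteq \Sigma_0'$ and $\e \geq \e'$. By hypothesis, for each $i \in I$ one picks $\bar b_i \in B$ that $\e$-approximates $\Sigma_0$. Let $\cU$ be an ultrafilter on $I$ extending the filter generated by the terminal segments $\{j \in I : j \geq i\}$. The ultrapower $B^\cU$ is an elementary extension of $B$, and $\bar c := [\bar b_i]_\cU$ satisfies $\varphi^{B^\cU}(\bar c,\bar a) = 0$ for every $\varphi \in \Sigma$, since for each such $\varphi$ and each $n$ the set $\{i : |\varphi^B(\bar b_i,\bar a)| < 1/n\}$ contains a terminal segment and hence lies in $\cU$. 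The complete type of $\bar c$ over $A$ with respect to $\Th(B) = \Th(B^\cU)$ then contains $\Sigma$, exhibiting consistency.

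For the second assertion, I would combine part (1) with countable saturation of the ultrapower $A^\cU$. Since $A \prec A^\cU$, any type $p$ over $A$ is consistent with $\Th(A)$, so by (1) every finite subset of $p$ is finitely approximable in $A$. When $A$ is separable, the space of formulas in a fixed tuple of variables $\bar x$ with parameters in $A$ is separable in the natural uniform-convergence seminorm (using separability of $A$ together with the uniform continuity of formulas whose modulus depends only on the formula). Hence $p$ is determined by a countable dense subfamily $\Sigma_0 \subseteq p$. Countable saturation of $A^\cU$, a standard consequence of $\aleph_1$-incompleteness of nonprincipal ultrafilters on $\bbN$ via \L o\'s' theorem, yields a tuple $\bar c \in A^\cU$ realizing the countable collection of conditions $\{|\varphi| \leq 1/k : \varphi \in \Sigma_0,\, k \geq 1\}$; density of $\Sigma_0$ in $p$ then promotes this to a realization of the whole $p$. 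The main point that requires care is this separability-based reduction from an arbitrary type to a countable subfamily; once it is in place, countable saturation combined with part (1) finishes the argument.
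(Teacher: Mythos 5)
Your proof is correct and follows the route the paper itself indicates: the paper gives no details beyond the remark that the proposition ``can be proved by an application of \L o\'s' theorem,'' and your argument is exactly that --- \L o\'s plus an ultraproduct compactness argument over the directed set of pairs $(\Sigma_0,\e)$ for the equivalence, and countable saturation of $A^{\cU}$ (with the standard reduction to a countable dense family of formulas over a separable parameter set) for the second claim. The only caveat, inherited from the statement's own loose phrasing, is the implicit convention that the tuples range over the unit ball, which your argument needs both for the formula $\inf_{\|\bar x\|\le 1}\max_{\varphi\in\Sigma_0}|\varphi(\bar x,\bar a)|$ to dominate the witness $\bar c$ and for $[\bar b_i]_{\cU}$ to define an element of the ultrapower.
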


A \cstar-algebra $C$ is \emph{countably saturated} if for every separable subset $A$ of  $C$, every type over $A$ is realized in $C$; it is countably quantifier-free saturated if over every separable subset $C$, every quantifier-free type is realized in $C$.
The second key property of ultrapowers alluded to in the introduction is the fact that
if $\cU$ is a nonprincipal ultrafilter on $\bbN$ then  $B^{\cU}$ is countably saturated for every \cstar-algebra (and even every metric structure) $B$.
We record the following classical result which is proved by a transfinite back-and-forth construction. The density character
 of a metric space is the minimal cardinality of a dense subset.
\begin{thm} \label{T.Keisler} 
Assume $C$ and $D$ are elementarily equivalent, countably saturated, and 
both have density character $\aleph_1$. Then $C$ and $D$ are isomorphic.

In particular, if the continuum hypothesis holds and  $A$ is separable then 
every  countably saturated model of density character $\aleph_1$ elementarily equivalent to $A$
is isomorphic to every ultrapower of $A$ associated to a nonprincipal ultrafilter on $\bbN$.~\qed
\end{thm}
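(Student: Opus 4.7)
The plan is a standard transfinite back-and-forth of length $\omega_1$. Fix dense sequences $(c_\alpha)_{\alpha<\omega_1}$ in $C$ and $(d_\alpha)_{\alpha<\omega_1}$ in $D$, and build by recursion on $\alpha<\omega_1$ an increasing chain of separable \cstar-subalgebras $A_\alpha\subseteq C$ and $B_\alpha\subseteq D$ together with $^*$-isomorphisms $f_\alpha\colon A_\alpha\to B_\alpha$ which are elementary as partial maps from $C$ to $D$ (that is, $\varphi^C(\bar a)=\varphi^D(f_\alpha(\bar a))$ for every formula $\varphi$ and every tuple $\bar a\in A_\alpha$), and satisfying $c_\beta\in A_\alpha$ and $d_\beta\in B_\alpha$ whenever $\beta<\alpha$.

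Start with $A_0=B_0=\bbC\cdot 1$ and $f_0$ the identity, which is elementary because $C\equiv D$. At a countable limit $\lambda$, set $A_\lambda=\overline{\bigcup_{\alpha<\lambda}A_\alpha}$ and $B_\lambda=\overline{\bigcup_{\alpha<\lambda}B_\alpha}$, and let $f_\lambda$ be the isometric extension of $\bigcup_{\alpha<\lambda}f_\alpha$; separability is preserved because $\lambda$ is countable, and elementarity of $f_\lambda$ follows from uniform continuity of formulas on bounded sets. At a successor $\alpha+1$ I perform one forth step and one back step. For the forth step, form the complete type $p:=\tp^C(c_\alpha/A_\alpha)$ and transport it along $f_\alpha$ to a type $f_\alpha(p)$ over $B_\alpha$; because $f_\alpha$ is elementary and $\Th(C)=\Th(D)$, the pushforward $f_\alpha(p)$ is consistent with $\Th(D)$. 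Since $B_\alpha$ is separable and $D$ is countably saturated, some $c'\in D$ realizes $f_\alpha(p)$, and $f_\alpha\cup\{(c_\alpha,c')\}$ then extends uniquely to an elementary $^*$-isomorphism between the \cstar-subalgebras generated by $A_\alpha\cup\{c_\alpha\}$ and $B_\alpha\cup\{c'\}$. The symmetric back step uses countable saturation of $C$ to produce a preimage of $d_\alpha$, yielding $A_{\alpha+1}$, $B_{\alpha+1}$, and $f_{\alpha+1}$.

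After $\omega_1$ stages, $A:=\bigcup_\alpha A_\alpha$ is dense in $C$ because it contains every $c_\alpha$, $B:=\bigcup_\alpha B_\alpha$ is dense in $D$ by symmetry, and $f:=\bigcup_\alpha f_\alpha\colon A\to B$ is an isometric $^*$-isomorphism. Since the $^*$-algebra operations are uniformly continuous on bounded sets with modulus independent of the ambient algebra, $f$ extends uniquely to a $^*$-isomorphism $\bar f\colon C\to D$. The second assertion is then immediate: under CH we have $|A^\cU|\le|A|^{\aleph_0}=2^{\aleph_0}=\aleph_1$, while by \L o\'s' theorem $A^\cU\equiv A$ and $A^\cU$ is countably saturated, so the first part applies with $C=A^\cU$ and $D$ any other countably saturated model of density character $\aleph_1$ elementarily equivalent to $A$.

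The only point requiring care is that each $f_\alpha$ be genuinely elementary rather than merely a $^*$-isomorphism, so that pushforwards of complete types through $f_\alpha$ lie in $\Th(D)$; this is preserved at successor steps precisely because the new image is chosen to realize the transported type, and at countable limits it follows from the uniform continuity of formulas on the unit ball. Otherwise this is the standard saturated-model uniqueness theorem transposed verbatim from the discrete to the metric setting, and no serious obstacle arises.
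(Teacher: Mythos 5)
Your proof is correct and is exactly the argument the paper has in mind: the theorem is stated there as a classical fact ``proved by a transfinite back-and-forth construction,'' and your $\omega_1$-stage back-and-forth, using countable saturation over separable parameter sets to realize transported complete types and uniform continuity of formulas to pass through limit stages and to the closure, is the standard instantiation of that construction, with the CH consequence handled as in the paper.
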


We should note that, unless the continuum hypothesis holds, 
there are no infinite-dimensional and countably saturated \cstar-algebras
of density character $\aleph_1$ by the main result in \cite{FaHaSh:Model2}.

We end this section with a well-known lemma. 

\begin{lemma} \label{L.elementary} For any pair of \cstar-algebras $A,B$, 
the set of elementary embeddings from $A$ to $B$ is closed in the point-norm topology. 
In particular, if  $\Phi\colon A\to B$ is  approximately unitarily equivalent 
to an elementary embedding $\Psi\colon A\to B$ then  $\Phi$ is an elementary embedding. 
\end{lemma}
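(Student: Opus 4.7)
\textbf{Proof plan for Lemma~\ref{L.elementary}.} The plan is to leverage the uniform continuity of formula interpretations recorded in \S\ref{S.Formulas}: for every formula $\varphi(\bar x)$, the function $\varphi^B$ is uniformly continuous on tuples from the unit ball of $B$, with a modulus of continuity depending only on $\varphi$ and not on $B$. Once that fact is in hand, closure under point-norm limits should follow by simply passing the limit through $\varphi^B$.

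Concretely, I would first fix a net $(\Phi_\lambda)$ of elementary embeddings $A\to B$ converging in point-norm to some $\Phi\colon A\to B$. Since the $*$-algebra operations are norm-continuous, $\Phi$ is a $*$-homomorphism. Given a formula $\varphi(\bar x)$ and a tuple $\bar a\in A$, after rescaling I may assume $\bar a$ lies in the unit ball of $A$; then $\Phi(\bar a)$ and each $\Phi_\lambda(\bar a)$ lies in the unit ball of $B$ (as $*$-homomorphisms are contractive). By uniform continuity of $\varphi^B$ on the unit ball and componentwise norm convergence $\Phi_\lambda(\bar a)\to\Phi(\bar a)$,
\[
\varphi^B(\Phi(\bar a))=\lim_\lambda \varphi^B(\Phi_\lambda(\bar a))=\varphi^A(\bar a),
\]
where the last equality uses that each $\Phi_\lambda$ is elementary. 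This shows $\Phi$ is elementary; applying this to $\varphi(x):=\|x\|$ gives that $\Phi$ is isometric, hence an embedding.

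For the second statement, there is a net of unitaries $(u_\lambda)\subseteq B$ with $\Ad u_\lambda\circ\Psi\to\Phi$ in point-norm. Each $\Ad u_\lambda$ is an inner automorphism of $B$, hence an elementary map $B\to B$, so $\Ad u_\lambda\circ\Psi$ is an elementary embedding as a composition of elementary maps. The first part of the lemma then yields that $\Phi$ is an elementary embedding. There is no real obstacle here; the only essential input is the uniform continuity of formulas on bounded tuples, which is built into the syntax and does not depend on the particular structure $B$.
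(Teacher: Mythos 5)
Your proposal is correct and follows essentially the same route as the paper: pass the point-norm limit through the (uniformly) continuous interpretation of each formula to see that a limit of elementary maps is elementary, and deduce the second statement because each $\Ad u_\lambda\circ\Psi$ is elementary. The only cosmetic point is that the rescaling step is unnecessary -- continuity of $\varphi^B$ at the tuple $\Phi(\bar a)$ (or uniform continuity on any bounded ball) already suffices, which is exactly what the paper invokes.
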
 

\begin{proof} Let $\Phi_\lambda$, for $\lambda\in \Lambda$, be a net of elementary maps from $A$ to $B$ that converges to $\Phi$ 
point-norm topology. In order to check that $\Phi$ is elementary fix a formula $\psi(\bar x)$. 
Then for every $\bar a$ in $A$ of the  same length as $\bar x$ we have that 
$\psi(\bar a)^A=\psi(\Phi_\lambda(\bar a))^B$ for all $\lambda$. If $\bar b=\lim_\lambda \Phi_\lambda(\bar a)$
then by the continuity of the interpretation of $\psi$ we have $\psi(\bar b)^B=\lim_\lambda \psi(\Phi_\lambda(\bar a))^B=\psi(\bar a)^A$.
Since $\psi$ was arbitrary this completes the proof. The second statement follows immediately. 
\end{proof} 

\subsection{Criteria for elementarity} \label{S.criteria}

Lemmas in this subsection are stated and proved for general metric structures (as in \cite{BYBHU}), 
although we shall need only the case when they are \cstar-algebras. Although not technically difficult, 
these lemmas do not appear elsewhere to the best of our knowledge. They were inspired by analysis of 
the model-theoretic behaviour of central sequences of strongly self-absorbing algebras. 
In all of these lemmas $A,B$ and $C$ denote metric structures in the same language. 
The notion of elementary substructure is a general one for metric structures and we prove these results in that generality.
For the reader interested only in \cstar-algebras, this notion agrees with the earlier notation and these results can be read as statements about \cstar-algebras without affecting the rest of the paper.

\begin{lemma} \label{L.El.1} 
Assume $C\prec B$ and $C\subseteq A\subseteq B$. Assume moreover that 
for every $m$ and every  $\bar a\in A$ we have 
\[
\inf\{ \dist(\alpha(\bar a), C^m): \alpha\in \Aut(B), \alpha\rs A\in \Aut(A)\} =0. 
\]
Then $C\prec A\prec B$. 
\end{lemma}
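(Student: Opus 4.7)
My plan is to reduce the lemma to showing just $A\prec B$; once this is established, the transitivity principle recalled after the definition of elementary submodel — namely that $C\prec B$, $C\subseteq A\subseteq B$, and $A\prec B$ together imply $C\prec A$ — immediately gives the conclusion.

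To prove $A\prec B$ I would induct on the complexity of formulas, showing that $\phi^A(\bar a)=\phi^B(\bar a)$ for every formula $\phi(\bar x)$ and every tuple $\bar a$ in $A$ of the appropriate length. Atomic formulas are handled by $A$ being a substructure of $B$, and the closure under continuous combinations (clause 3(a)) is automatic.

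The real content is the quantifier step. Suppose $\psi(\bar y)=\inf_{\|x\|\le 1}\phi(x,\bar y)$ (the $\sup$ case is dual) and fix $\bar a\in A^n$. Given $\e>0$, the hypothesis applied with $m=n$ produces $\alpha\in\Aut(B)$ with $\alpha\rs A\in\Aut(A)$ and $\bar c\in C^n$ with $\|\alpha(\bar a)-\bar c\|<\e$. Since $\alpha$ and $\alpha\rs A$ are automorphisms, $\psi^B(\bar a)=\psi^B(\alpha(\bar a))$ and $\psi^A(\bar a)=\psi^A(\alpha(\bar a))$. The interpretation of $\psi$ is uniformly continuous on the unit ball with a modulus $\delta_\psi$ depending only on $\psi$ (not on the structure), so $|\psi^B(\alpha(\bar a))-\psi^B(\bar c)|$ and $|\psi^A(\alpha(\bar a))-\psi^A(\bar c)|$ are both at most $\delta_\psi(\e)$. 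Moreover $\psi^B(\bar c)=\psi^C(\bar c)$ because $C\prec B$.

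What remains is to identify $\psi^A(\bar c)$ with $\psi^C(\bar c)$. Using the inductive hypothesis for the simpler formula $\phi$,
\[
\psi^A(\bar c)=\inf_{a'\in A,\,\|a'\|\le 1}\phi^A(a',\bar c)=\inf_{a'\in A,\,\|a'\|\le 1}\phi^B(a',\bar c).
\]
Since this infimum is taken over a subset of $B$, it is $\ge \psi^B(\bar c)=\psi^C(\bar c)$. For the reverse inequality, $C\subseteq A$ gives
\[
\psi^A(\bar c)\le \inf_{c'\in C,\,\|c'\|\le 1}\phi^B(c',\bar c)=\inf_{c'\in C,\,\|c'\|\le 1}\phi^C(c',\bar c)=\psi^C(\bar c),
\]
the middle equality being a single application of $C\prec B$ to $\phi$ at tuples from $C$. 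Concatenating all these comparisons yields $|\psi^A(\bar a)-\psi^B(\bar a)|\le 2\delta_\psi(\e)$; letting $\e\to 0$ completes the induction.

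The main point requiring care is the quantifier step, and within it the verification that $\psi^A(\bar c)=\psi^C(\bar c)$ using only facts available at that stage of the induction. No deeper obstacle appears: we exploit the automorphism hypothesis to push $\bar a$ close to $C$, use uniform continuity to replace $\alpha(\bar a)$ by $\bar c$ inside $\psi$ up to a structure-independent error, and then sandwich $\psi^A(\bar c)$ between $\psi^B(\bar c)$ and $\psi^C(\bar c)$ using $C\subseteq A\subseteq B$ together with $C\prec B$.
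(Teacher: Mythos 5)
Your proof is correct and takes essentially the same route as the paper's: reduce everything to showing $A\prec B$, then handle the quantifier case by using the automorphism hypothesis to move $\bar a$ close to $C$, uniform continuity of the interpretation (with structure-independent modulus), and the sandwich $C\subseteq A\subseteq B$ together with $C\prec B$. The only difference is presentational: the paper invokes the Tarski--Vaught test, so it only compares $\inf_{y\in A}$ and $\inf_{y\in B}$ of $B$-interpretations and needs no induction, whereas you in effect re-prove that test by inducting on formula complexity; the key step is identical.
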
 

\begin{proof}
It suffices to prove that $A \prec B$, since from this it follows that $C \prec A$.
Fix a formula $\inf_y\phi(\bar x,y)$ and $\bar a$ in $A$. By the Tarski--Vaught test (\cite[Proposition~4.5]{BYBHU})
we need to check that $\inf_{y\in A}\phi(\bar a,y)^B=\inf_{y\in B} \phi(\bar a,y)^B$. 

Since $C\prec B$,  if  $\bar c$ is in $C^m$ 
we have 
\[
\inf_{y\in B}\phi(\bar c, y)^B=\inf_{y\in C}\phi(\bar c, y)^B
\]
 for all $m$. 
The monotonicity of taking $\inf$ implies  
\[
\inf_{y\in B}\phi(\bar c, y)^B\leq 
\inf_{y\in A}\phi(\bar c, y)^B\leq 
\inf_{y\in C}\phi(\bar c, y)^B
\]
and therefore we have the  equality 
\[
\inf_{y\in A}\phi(\bar c, y)^B=\inf_{y\in B}\phi(\bar c,y)^B.
\]
Fix a sequence $\alpha_n$, for $n\in \bbN$, of automorphisms of $B$ such that $\alpha_n\rs A$ is an automorphism 
of $A$ such that $\lim_n\dist(\alpha_n(\bar a), C^m)=0$. Let $\bar a(n)$ be a tuple from $C$ such that 
$\lim_n \dist(\alpha_n(\bar a), \bar a(n))=0$. 

Then $\inf_{y\in B}\phi(\bar a,y)^B=\inf_{y\in B} \phi(\alpha_n(\bar a), y)^B$ 
and $\lim_n |\inf_{y\in B}\phi(\alpha_n(\bar a))^B-\inf_{y\in B} \phi(\bar a(n))^B|=0$
by the uniform continuity of the interpretation of $\phi$. 
By the above, the conclusion follows. 
\end{proof}

\begin{lemma} \label{L2+} Assume $A\subseteq B$.
Assume in addition that for  all $m,n$, and $\bar b\in B^m$ and  $\bar a \in A^n$ 
we have
\[
\inf\{ 
d(\alpha(\bar a),\bar a)+ \dist(\alpha(\bar b),A^m) : \alpha\in \Aut(B)\}  =0. 
\]
Then $A\prec B$. 
If $B$ is in addition separable, then $A\cong B$. 
\end{lemma}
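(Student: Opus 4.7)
For $A\prec B$ I would verify the Tarski--Vaught test; for the isomorphism assertion I would carry out a back-and-forth limit construction of automorphisms of $B$ whose pointwise limit is an isomorphism onto $A$.

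To verify Tarski--Vaught, fix a formula $\inf_y\phi(\bar x,y)$ and a tuple $\bar a$ in $A$; only the inequality $\inf_{y\in A}\phi(\bar a,y)^B\le\inf_{y\in B}\phi(\bar a,y)^B$ requires work. Given $\varepsilon>0$ and $b\in B$ almost realizing the right-hand infimum, applying the hypothesis to $\bar a$ and to the singleton $(b)$ yields $\alpha\in\Aut(B)$ and $a'\in A$ with $d(\alpha(\bar a),\bar a)$ and $d(\alpha(b),a')$ arbitrarily small. Since $\alpha$ is an automorphism, $\phi^B(\bar a,b)=\phi^B(\alpha(\bar a),\alpha(b))$, and by the uniform continuity of $\phi$ on the unit ball this equals $\phi^B(\bar a,a')$ up to $O(\varepsilon)$. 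As $a'\in A$, this bounds the left-hand infimum as required.

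For the isomorphism statement, assume $B$ is separable and fix countable dense sequences $(b_n)$ in $B$ and $(a_n)$ in $A$. I would build $\alpha_n\in\Aut(B)$ inductively, $\alpha_0=\id$ and $\alpha_n=\beta_n\circ\alpha_{n-1}$, with each $\beta_n$ supplied by the hypothesis at tolerance $2^{-n}$, taking as $\bar a$ the cumulative list of $A$-parameters committed so far---the current forth-targets $a^{(n-1)}_k\in A$ close to $\alpha_{n-1}(b_k)$, together with the back-targets $a_j$ added at previous back stages---and as $\bar b$ the tuple $(\alpha_{n-1}(b_k))_k$ of currently-tracked images. At odd stages $n=2k-1$ (the forth step) a new $b_k$ enters the tracked list and the hypothesis produces updated forth-targets $a^{(n)}_k\in A$ within $2^{-n}$ of $\alpha_n(b_k)$; at even stages $n=2k$ (the back step) the new $a_k$ is added to the $A$-parameter list to be approximately fixed thereafter. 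The summability of the perturbations $2^{-n}$ then yields two convergence facts: first, the sequences of forth-targets $(a^{(n)}_k)_n$ are Cauchy in the closed set $A$ (each subsequent $\beta_{n+1}$ approximately fixes $a^{(n)}_k$ and is an isometry), so $\alpha_n(b_k)$ converges to some $\beta(b_k)\in A$; second, the preimages $c^{(m)}_k:=\alpha_m^{-1}(a_k)$ are Cauchy in $B$ (since $\beta_{m+1}^{-1}$ approximately fixes $a_k$), with limit $c_k\in B$ satisfying $\beta(c_k)=\lim_m\alpha_m(c^{(m)}_k)=a_k$. Extending $\beta$ by uniform continuity gives a map $B\to B$ which is elementary, and in particular isometric, by Lemma~\ref{L.elementary}; its image is contained in $A$ (forth commitments), contains the dense set $\{a_k\}\subseteq A$ (back commitments), and is closed as the isometric image of a complete space. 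Hence $\beta(B)=A$, so $\beta\colon B\to A$ is the desired isomorphism.

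The principal technical obstacle is the bookkeeping that keeps every forth- and back-commitment made at earlier stages valid through each new application of the hypothesis. This succeeds precisely because the hypothesis permits arbitrarily long parameter tuples in its $\bar a$-slot, allowing the entire history of $A$-commitments to be absorbed at every step without being significantly disturbed.
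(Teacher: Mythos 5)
Your proof is correct and follows essentially the same route as the paper: the elementarity claim is verified via the Tarski--Vaught test exactly as in the paper's argument (applying the hypothesis to $\bar a$ and a near-optimal witness $b$, then using uniform continuity of $\phi$), and for the separable case the paper simply invokes ``a standard back and forth argument,'' which your limit-of-automorphisms construction carries out in detail. The bookkeeping you describe (accumulating forth-targets and back-targets in the $\bar a$-slot with summable tolerances) is exactly the standard argument the paper has in mind, and your convergence estimates go through.
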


\begin{proof} We need to check the Tarski--Vaught test,
that for every $n$-ary formula $\inf_{y} \psi(\bar x,y) $ and all $\bar a\in A$ 
we have 
\[
\inf_{y\in B} \psi(\bar a, y)^B
=
\inf_{y\in A} \psi(\bar a, y)^B.
\]
We clearly have ``$\leq$.'' In order to prove ``$\geq$,''
set $r:=\inf_{y\in B} \psi(\bar a, \bar y)^B$. 
Fix $\e>0$ and $b\in B$ such that 
$\psi(\bar a, b)^B<r+\e/2$. Now $\psi^B$ is uniformly continuous so fix $\delta$ corresponding to $\e/2$ 
for 
the uniform continuity modulus of $\psi$.
Pick an automorphism $\alpha$ of $B$ such that
\[
d(\bar a,\alpha(\bar a)) + d(\alpha(b),A) < \delta.
\]
Then $\psi(\alpha(\bar a), \alpha(b))^B=\psi(\bar a, b)^B<r+\e/2$.
Now pick $b' \in A$ such that $d(\alpha(b),b')) < \delta$.
Since we also have $d(\bar a,\alpha(\bar a)) < \delta$, altogether
we get
\[
|\psi^B(\bar a, b') -  \psi^B(\alpha(\bar a),\alpha(b))| < \e/2
\]
  Since $b' \in A$, the conclusion follows.

In the case when $B$ is separable, this is a standard back and forth argument.
\end{proof} 

We record a slightly simpler form of Lemma~\ref{L2+} (for use when $B$ is countably quantifier-free saturated, under which condition this version is equivalent). 

\begin{lemma} \label{L2} Assume $A\subseteq B$ and that for all $m,n$ and $\bar b\in B^m$ and  $\bar a \in A^n$ there exists
an automorphism $\alpha$ of $B$ such that (with a slight abuse of notation) 
$\alpha(\bar a)=\bar a$ and $\alpha(\bar b)\in A$. Then $A\prec B$. \qed
\end{lemma}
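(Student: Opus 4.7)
The plan is to derive Lemma \ref{L2} as an immediate specialization of Lemma \ref{L2+}. The hypothesis of Lemma \ref{L2} is strictly stronger than that of Lemma \ref{L2+}: instead of asking that the quantity
\[
\inf\bigl\{ d(\alpha(\bar a),\bar a)+ \dist(\alpha(\bar b),A^m) : \alpha\in \Aut(B)\bigr\}
\]
merely equal $0$, it asks that the infimum be \emph{attained} by some single automorphism $\alpha$. So I would simply observe that if $\alpha\in\Aut(B)$ satisfies $\alpha(\bar a)=\bar a$ and $\alpha(\bar b)\in A^m$, then $d(\alpha(\bar a),\bar a)=0$ and $\dist(\alpha(\bar b),A^m)=0$, whence the infimum in Lemma \ref{L2+} is $0$. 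Applying Lemma \ref{L2+} then gives $A\prec B$.

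Because the deduction is a one-line reduction, there is essentially no obstacle: the substantive work was already carried out in the proof of Lemma \ref{L2+}, where a Tarski--Vaught argument combined with the uniform continuity of the interpretation of each formula was used to pass from approximate fixing of $\bar a$ and approximate inclusion of $\alpha(\bar b)$ into $A$ to genuine elementarity. Lemma \ref{L2} merely packages the exact (rather than approximate) version of the hypothesis. Under the additional assumption that $B$ is countably quantifier-free saturated, the exact version is actually equivalent to the approximate one (by realizing the quantifier-free type that expresses ``$\alpha$ fixes $\bar a$ and moves $\bar b$ into $A$'' in the automorphism group acting on $B$), which is the sense in which Lemma \ref{L2} is a slightly simpler reformulation intended for use in the saturated setting.

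Thus the entire proof proposal is: invoke Lemma \ref{L2+} with, for each $\bar a\in A^n$ and $\bar b\in B^m$, the witnessing automorphism $\alpha$ provided by the hypothesis of Lemma \ref{L2}; this witness makes the infimum equal to $0$, and Lemma \ref{L2+} yields $A\prec B$.
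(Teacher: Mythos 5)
Your reduction is correct and is exactly how the paper handles it: Lemma \ref{L2} is stated with no separate proof precisely because the exact hypothesis (an automorphism fixing $\bar a$ with $\alpha(\bar b)\in A^m$) makes the infimum in Lemma \ref{L2+} equal to $0$, so Lemma \ref{L2+} applies immediately. Nothing further is needed.
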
 

%
%
%

\section{Main results} 
\label{S.pDa}

Let  $\cF$ be a filter on $\bbN$ (we stick to $\bbN$ only for convenience).
Identifying $a\in \ell_\infty(A)$ with the sequence $(a(n): n\in \bbN)$  we set  
\[
c_{\cF}(A):=\{a\in \ell_\infty(A): \inf_{X\in \cF} \sup_{n\in X} \|a(n)\|=0\}. 
\]
This is a two-sided, norm-closed, (therefore) self-adjoint ideal in $\ell_\infty(A)$. Set 
\[
A^{\cF}:=\ell_\infty(A)/c_{\cF}(A). 
\]
Important special cases are ultrapowers (when $\cF$ is an ultrafilter) and the asymptotic sequence 
algebra (when $\cF$ is the Frech\'et filter).

We record a key fact about \cstar-algebras with approximately inner half-flip\  taken from 
\cite[Theorem~7.2.2]{Ror:Classification} (as extracted from  \cite{effros1978c}).

\begin{thm}\label{T.RC} 
Suppose $A$ and $D$ are separable, unital \cstar-algebras, $D$ has approximately inner half-flip, 
and $\cF$ is a free filter (i.e.,  it includes the  Frech\'et filter). 
\begin{enumerate}
\item  Then $A$ is $D$-absorbing if 
$D$ unitally embeds into $A'\cap A^{\cF}$.
\item If $D$ is in addition strongly self-absorbing  
then the converse holds, so $A$ is $D$-absorbing if and only if $D$ unitally embeds into $A'\cap A^{\cF}$.
 \end{enumerate}
\end{thm}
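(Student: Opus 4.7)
The plan is to prove both directions via a single preliminary lemma that extracts the essence of the approximately inner half-flip: if $D$ has approximately inner half-flip and $\phi_1,\phi_2\colon D\to B$ are unital $^\ast$-homomorphisms into a unital \cstar-algebra whose images commute, then $\phi_1\sim_u \phi_2$ in $B$. This is because the commuting ranges assemble into a single unital $^\ast$-homomorphism $\Theta\colon D\otimes_{\mathrm{min}} D\to B$ with $\Theta\circ(\id_D\otimes 1_D)=\phi_1$ and $\Theta\circ(1_D\otimes \id_D)=\phi_2$, while the half-flip hypothesis says $\id_D\otimes 1_D$ and $1_D\otimes \id_D$ are approximately unitarily equivalent inside $D\otimes_{\mathrm{min}}D$. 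I would use this lemma at every step where two embeddings need to be reconciled.

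For direction (1), suppose $\Phi\colon D\to A'\cap A^\cF$ is a unital embedding. Using separability of $D$, I would lift $\Phi$ to a sequence $\phi_n\colon D\to A$ of unital $^\ast$-homomorphisms that are asymptotically central along~$\cF$ (taking representatives in $\ell_\infty(A)$ and invoking nuclearity of $D$, which follows from the approximately inner half-flip). This reduces the task to showing that $A\otimes D\cong A$ given such asymptotically central unital $^\ast$-homomorphisms. I would then run a one-sided Elliott approximate intertwining between $\alpha\colon A\to A\otimes D$, $a\mapsto a\otimes 1_D$, and maps $\beta\colon A\otimes D\to A$, $a\otimes d\mapsto a\phi_n(d)$ (a genuine homomorphism up to arbitrarily small error for $n$ large). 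One direction $\beta\alpha\approx \id_A$ is immediate from $\phi_n(1_D)=1_A$; for the other, $\alpha\beta$ sends $a\otimes d$ to $a\phi_n(d)\otimes 1_D$, and to compare this with $a\otimes d$ inside $A\otimes D$ I would apply the preliminary lemma to the commuting unital embeddings $1_A\otimes \id_D$ and $\phi_n(\cdot)\otimes 1_D$ of $D$ into $A\otimes D$, getting unitaries that conjugate one into (an approximation of) the other. Diagonalizing over a cofinal sequence of finite subsets yields the desired isomorphism $A\otimes D\cong A$.

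For direction (2), assume $D$ is strongly self-absorbing and $A\otimes D\cong A$. Iterating gives isomorphisms $A\cong A\otimes D^{\otimes k}$ for every $k\geq 1$, chosen compatibly so that passing from $k$ to $k+1$ is realized by adjoining a fresh $D$-factor and applying a fixed strongly self-absorbing isomorphism $D^{\otimes(k+1)}\cong D^{\otimes k}$ (whose first $k$ tensor coordinates are approximately fixed, by the definition of strong self-absorption). Placing $\id_D$ in the $k$-th fresh coordinate yields unital $^\ast$-homomorphisms $\iota_k\colon D\to A$ whose images approximately commute with any given finite subset of $A$ once $k$ is large. The sequence $(\iota_k(d))_k$ therefore lies in $A'\cap A^\cF$ for any free filter $\cF$, and $d\mapsto [\iota_k(d)]_\cF$ is the required unital embedding $D\to A'\cap A^\cF$.

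The main obstacle is the approximate-intertwining step in direction (1): one must arrange the unitaries delivered by the commuting-embedding lemma to make the intertwining squares close on increasing finite subsets, so that the Elliott machinery produces a genuine isomorphism in the limit rather than merely a sequence of approximate ones. A second, milder issue is the lifting from $A^\cF$ to an actual sequence of $^\ast$-homomorphisms $\phi_n\colon D\to A$; for an ultrafilter this is standard, and for a general free filter it requires choosing representatives along a set in $\cF$, which is harmless because $\cF$ contains the Fr\'echet filter.
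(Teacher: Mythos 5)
The main gap is in your direction (1): you assert that, by nuclearity, the unital embedding $\Phi\colon D\to A'\cap A^{\cF}$ lifts to a sequence of unital $^\ast$-homomorphisms $\phi_n\colon D\to A$. Nuclearity (via Choi--Effros) gives only unital completely positive lifts; a $^\ast$-homomorphism into a quotient $\ell_\infty(A)/c_{\cF}(A)$ lifts multiplicatively only under (weak) semiprojectivity-type hypotheses that $D$ is not assumed to have, and at this stage of the argument you do not even know that any unital $^\ast$-homomorphism $D\to A$ exists at all -- that is part of what the theorem delivers, since $A\cong A\otimes D$ gives a unital copy of $D$ in $A$. The correct substitute is a sequence of unital completely positive maps $D\to A$ that are approximately multiplicative and approximately central on prescribed finite sets (obtained from the c.p.\ lift along sets of $\cF$, which are infinite because $\cF$ is free), and the one-sided intertwining must then be run with these approximate morphisms: the unitaries produced by your commuting-ranges lemma live in $D\otimes D$ and have to be pushed into $A\otimes D$ by an only approximately multiplicative map, so the error control you defer as ``the main obstacle'' is exactly the substance of the proof. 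Relatedly, your $\beta(a\otimes d)=a\phi_n(d)$ is not a $^\ast$-homomorphism (nor obviously well defined and bounded on $A\otimes D$) when $\phi_n(D)$ only approximately commutes with $A$; the standard argument avoids any global $\beta$ and instead verifies directly the hypothesis of the one-sided intertwining criterion, namely unitaries in $A\otimes D$ almost fixing a given finite subset of $A\otimes 1$ and moving a given finite subset of $A\otimes D$ close to $A\otimes 1$. This is precisely Theorem~7.2.2 of R{\o}rdam's book, which the paper cites for (1) rather than reproving.

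Direction (2) is essentially right but imprecisely justified: the definition of strong self-absorption gives approximate unitary equivalence of $D^{\otimes(k+1)}\cong D^{\otimes k}$ with the first-factor inclusion, not that the isomorphism ``approximately fixes'' the first $k$ coordinates, and conjugating $\iota_k$ by $k$-dependent unitaries is not harmless when testing commutation against a fixed $a\in A$. The clean repair: strong self-absorption gives $D\cong D^{\otimes\infty}$, hence $A\cong A\otimes D\cong A\otimes D^{\otimes\infty}$; the copy of $D$ in the $k$-th far-out tensor factor commutes exactly with $A\otimes D^{\otimes(k-1)}\otimes 1$, and density of $\bigcup_k A\otimes D^{\otimes k}\otimes 1$ yields asymptotic centrality in the full limit, so the resulting sequence defines a unital (automatically isometric, as each coordinate is) embedding into $A'\cap A^{\cF}$ for every free $\cF$, since $\cF$ contains all cofinite sets. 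Note that the paper treats the arbitrary-filter step differently: it embeds $D$ into $A'\cap A^{\cF_0}$ for the Fr\'echet filter $\cF_0$ and composes with the quotient map $A^{\cF_0}\to A^{\cF}$, using simplicity of $D$ for injectivity; your ``one sequence works for all free filters'' observation is a legitimate alternative once the centrality argument is fixed as above.
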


\begin{proof} Only the  converse direction in (2) may require a proof. 
Suppose $D$ is strongly self-absorbing and $A$ is $D$-absorbing. 
Then $D$ unitally embeds into $A'\cap A^{\cF_0}$, where $\cF_0$  
is the Frech\'et filter. So  $A^{\cF}$ is a quotient of $A^{\cF_0}$ and the 
quotient map sends    
$A'\cap A^{\cF}$ into  $A'\cap A^{\cF_0}$. 
Since $D$ is a simple unital subalgebra of $A'\cap A^{\cF_0}$, 
the  quotient map is injective on  $D$. 
\end{proof} 

The fact that $D$ unitally embeds into $A'\cap A^{\cF_0}$ when $A$ is $D$-absorbing
easily follows from the fact that $A'\cap A^{\cF_0}$  is countably quantifier-free saturated
and that $A\models \bbT_D$ (see \S\ref{S.TD}). 
Note that  $A'\cap A^{\cF}$ need not be countably saturated (e.g. if $\cF=\cZ_0$, 
by an argument from  Example~\ref{Ex.Z0}). 

The choice of the ultrafilter $\cU$ is irrelevant in this statement, although 
the isomorphism type of $A'\cap A^{\cU}$ depends on the choice of $\cU$ unless the continuum hypothesis holds; see \cite{FaHaSh:Model1}.

Example \ref{Ex.O-infty} shows that the converse in Theorem \ref{T.RC} need not hold if $D$ has approximately inner half-flip, 
or even approximately inner flip,  but isn't strongly self-absorbing.

\subsection{Theory $\bbT_D$} \label{S.TD} 
Fix a separable \cstar-algebra $D$.  Suppose that $p_1,\ldots,p_n$ are *-polynomials with complex coefficients 
in the noncommuting 
variables $y_1,\ldots,y_n$ and let $d_1,\ldots,d_n \in D$.  Let $r_j= \|p_j(d_1,\ldots,d_n)\|$ for $j = 1,\ldots,n$.  Let $\Delta=(p_1,\ldots,p_n,d_1,\ldots,d_n)$. We define the formulas
\begin{align*}
\phi_{D,\Delta}(\bar x,\bar y)&:= \max_{i<n,j<n} \|[x_i, y_j]\|+\max_{j<n} |r_j-\|p_j(\bar y)\||, \\
\psi_{D,\Delta}&:=\sup_{\bar x} \inf_{\bar y}\phi_{D,\Delta}(\bar x,\bar y). 
\end{align*}
%
Set 
\[
\bbT_D:=\{\psi_{D,\Delta}: \Delta\},
\]
where $\Delta$ ranges over all $(p_1,\dots,p_n,d_1,\dots,d_n)$, over all $n\in \bbN$.
Recall that $\Th(A)=\{\phi: \phi^A=0\}$. We write $A\models \bbT$ if $\bbT\subseteq \Th(A)$. 
The \cstar-algebra $D$ in the following lemma is not assumed to have any special properties such as approximately inner half-flip and in particular need not be unital.

\begin{lemma} \label{L.RC.D} For  separable \cstar-algebras $A$ and $D$, and an  ultrafilter $\cU$, $D$ embeds into 
the central sequence algebra $A'\cap A^{\cU}$ if and only if $A\models \bbT_D$. 
\end{lemma}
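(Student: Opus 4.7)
The plan is to prove both directions by making explicit the intuitive content of $\psi_{D,\Delta}$: this sentence says that every $n$-tuple $\bar x$ in the unit ball of $A$ can be approximately complemented by an $n$-tuple $\bar y$ in the unit ball that commutes with $\bar x$ and has the same polynomial-norm data as $\bar d$. This is precisely what an embedding of $D$ into $A'\cap A^\cU$ furnishes for every $\bar x$ coming from $A$. Without loss of generality, and by rescaling each $d_j$ (absorbing scalars into the $p_j$'s), I assume throughout that $\|d_j\|\le 1$.

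For the forward direction ($\Rightarrow$), assume $\iota\colon D\hookrightarrow A'\cap A^\cU$ is an injective $^\ast$-homomorphism, hence isometric. Fix $\Delta=(p_1,\dots,p_n,d_1,\dots,d_n)$ and an $n$-tuple $\bar x$ in the unit ball of $A$. Lift $\iota(d_j)=[(y_j^{(k)})_k]_\cU$ with $\|y_j^{(k)}\|\le 1$. Since $\iota(d_j)$ commutes with the diagonal image of $\bar x$ in $A^\cU$ and $\iota$ is isometric, we have $\lim_{k\to\cU}\|[x_i,y_j^{(k)}]\|=0$ and $\lim_{k\to\cU}\|p_j(\bar y^{(k)})\|=r_j$. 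Consequently, for every $\e>0$ the set $\{k:\phi_{D,\Delta}(\bar x,\bar y^{(k)})^A<\e\}$ belongs to $\cU$ and is in particular nonempty, witnessing $\inf_{\bar y}\phi_{D,\Delta}(\bar x,\bar y)^A=0$. Taking $\sup$ over $\bar x$ in the unit ball yields $\psi_{D,\Delta}^A=0$, so $A\models\bbT_D$.

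For the reverse direction ($\Leftarrow$), assume $A\models\bbT_D$. Fix countable sequences $(d_k)$ dense in the unit ball of $D$, $(a_i)$ dense in $A$, and $(q_l)$ enumerating all $^\ast$-polynomials with $\bbQ[i]$-coefficients in the variables $(y_k)_{k\ge 1}$. For each $k$, pick $n_k\ge k$ so that $q_1,\dots,q_k$ mention only variables $y_1,\dots,y_{n_k}$. Apply $\psi^A_{D,\Delta_k}=0$ to the datum $\Delta_k$ consisting of the polynomials $q_1,\dots,q_k$ padded to length $n_k$ by the zero polynomial, the elements $d_1,\dots,d_{n_k}$, and the tuple $\bar x_k=(a_1,\dots,a_k,0,\dots,0)$, obtaining $\bar y^{(k)}\in A^{n_k}$ in the unit ball satisfying $\phi_{D,\Delta_k}(\bar x_k,\bar y^{(k)})^A<1/k$. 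Setting $y_j:=[(y_j^{(k)})_{k\ge j}]_\cU\in A^\cU$, the $1/k$ approximations converge to exact equalities along the nonprincipal ultrafilter $\cU$: one obtains $[a_i,y_j]=0$ for all $i,j$, and $\|q(y_{j_1},\dots,y_{j_m})\|=\|q(d_{j_1},\dots,d_{j_m})\|$ for every such $q$. Hence $d_j\mapsto y_j$ is an isometric $^\ast$-homomorphism on the dense $^\ast$-subalgebra generated by $(d_j)$, extending by continuity to an embedding $\iota\colon D\hookrightarrow A^\cU$ whose image commutes with a dense subset of $A$, and therefore with all of $A$.

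The main technical point is ensuring that the assignment $d_j\mapsto y_j$, defined only on a countable dense set, yields a well-defined isometric $^\ast$-homomorphism on $D$. This is packaged uniformly by the exact polynomial-norm equalities $\|q(\bar y)\|=\|q(\bar d)\|$ for all $\bbQ[i]$-$^\ast$-polynomials $q$: the same conditions force algebraic relations to be preserved (since $\|q(\bar d)\|=0$ implies $\|q(\bar y)\|=0$) and simultaneously guarantee isometry. The ultrafilter hypothesis is used only through the $\cU$-limit step, so the same argument goes through for any free filter $\cF$ such that $A^\cF$ is countably quantifier-free saturated.
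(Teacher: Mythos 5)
Your proof is correct and follows essentially the same route as the paper: the forward direction unpacks the paper's appeal to \L o\'s' theorem ($A\prec A^{\cU}$) by lifting the embedded copy of $D$ to representing sequences, and the reverse direction constructs by hand, via a diagonal choice of witnesses and $\cU$-limits, the realization of the commutation-plus-polynomial-norm type that the paper obtains by citing countable saturation of $A^{\cU}$, finishing with the same ``extend $d_j\mapsto y_j$ by continuity'' step. Your preliminary normalization $\|d_j\|\le 1$ (absorbing scalars into the polynomials) is a reasonable reading of the definition of $\bbT_D$, since the paper's quantifiers range over unit balls and its own proof implicitly uses the same convention.
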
 

\begin{proof} Assume $D$ embeds into $A'\cap A^{\cU}$ and fix $n$. 
Since $A$ is an elementary submodel of $A^{\cU}$, for every $n$-tuple $\bar a$ in $A$ we have
$\phi_{D,n}(\bar a)^A=\phi_{D,n}(\bar a)^{A^{\cU}}=0$. 
Therefore $A\models \bbT_D$. 

Now assume $A\models \bbT_D$. Introduce variables $y_d$ for every $d \in X$ where $X$ is a countable dense subset of $D$. Consider the type over $A$ given by all the formulas
$\phi_{D,\Delta}(a_1,\ldots,a_n,y_{d_1},\ldots,y_{d_n})$ where $\Delta = (p_1,\ldots,p_n,d_1,\ldots,d_n)$ and $a_1,\ldots,a_n \in A$.  This type is consistent because $A \models \bbT_D$ and hence is realized in $A^\cU$ by some set $\{ c_d : d \in X\}$. The map $d \mapsto c_d$ can be extended to a $^\ast$-homomorphism from $D$ into $A'\cap A^{\cU}$.
\end{proof}

Theorem~\ref{T.RC} and Lemma~\ref{L.RC.D} imply the following. 
 
\begin{lemma}\label{L.D.abs} If $A$ is separable and $D$ has approximately inner half-flip then $A\models \bbT_D$ implies that 
$A$ is $D$-absorbing.
If $D$ is in addition strongly self-absorbing then $A\models \bbT_D$ if and only if
$A$ is $D$-absorbing.
 \qed
\end{lemma}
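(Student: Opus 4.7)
The plan is to derive both directions by concatenating Theorem~\ref{T.RC} with Lemma~\ref{L.RC.D}, since each direction passes through an embedding $D \hookrightarrow A' \cap A^{\cU}$ for some free ultrafilter $\cU$ on $\bbN$.

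For the first statement, I would assume $A \models \bbT_D$ and use Lemma~\ref{L.RC.D} to produce a $^\ast$-homomorphism $\Phi\colon D \to A' \cap A^{\cU}$. Since $D$ has approximately inner half-flip, $D$ is unital (and simple), and the formulas $\psi_{D,\Delta}$ in $\bbT_D$ with $\Delta$ including $d_j = 1_D$ together with the polynomial $p_j(y) = y - 1$ (so $r_j = 0$) force $\|\Phi(1_D) - 1_{A^{\cU}}\| = 0$, i.e., $\Phi$ is unital. Theorem~\ref{T.RC}(1) then gives that $A$ is $D$-absorbing.

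For the second statement, assume in addition that $D$ is strongly self-absorbing. The implication $A \models \bbT_D \Rightarrow A$ is $D$-absorbing is already covered by the first statement. For the converse, assume $A$ is $D$-absorbing. Theorem~\ref{T.RC}(2) yields a unital embedding of $D$ into $A' \cap A^{\cU}$, and then the forward direction of Lemma~\ref{L.RC.D} delivers $A \models \bbT_D$.

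The only point requiring care is unit-preservation of the embedding produced from $A \models \bbT_D$; this is a routine bookkeeping step once one records that $1$ is available as a constant in the language of unital \cstar-algebras (so polynomials of the form $y - 1$ are admissible). With that confirmed, the lemma reduces to a short chain of applications of the two prior results, so there is no substantial technical obstacle beyond assembling the pieces correctly.
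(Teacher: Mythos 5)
Your proof is correct and is essentially the paper's own argument: the lemma is stated there as an immediate consequence of Theorem~\ref{T.RC} combined with Lemma~\ref{L.RC.D}, which is exactly the chain you assemble (a nonprincipal ultrafilter contains the Fr\'echet filter, so Theorem~\ref{T.RC} applies). Your extra bookkeeping to make the embedding unital, using that $1$ is available in the language of unital \cstar-algebras so that conditions of the form $\|y-1\|$ may be included in the type, is a legitimate way to handle a point the paper leaves implicit.
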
 

The separability of $A$ is necessary in Lemma~\ref{L.D.abs} since by \cite{Gha:SAW*} 
ultrapowers are always tensorially indecomposable.
By Example~\ref{Ex.O-infty}
$A$ being $D$-absorbing for some $D$ with approximately inner flip does not imply $A\models \bbT_D$. 

\begin{definition} \label{D.pDa} 
Fix a separable, unital \cstar-algebra $D$ and a (typically nonseparable) \cstar-algebra $C$.  
\begin{enumerate}
\item  We say $C$ is \emph{\pDa} if $C\models \bbT_D$. 
\item We say $C$ is \emph{\Dsat} if for every separable $X\subseteq C$ there is an injective unital 
$^\ast$-homomorphism of $D$ into $X'\cap C$. 
\end{enumerate}
\end{definition} 

The notion that a \cstar-algebra is \Dsat{} is meant to convey the idea 
that $D$ is present everywhere in $C$, so that $C$ is saturated with isomorphic copies of $D$. 
If $C$ is \Dsat{} then a copy of $D$ can be found in the relative commutant of any separable subalgebra of $C$.  

Notice that if a \cstar-algebra is \pDa{}  and countably saturated then it is \Dsat.
In fact, 

\begin{lemma} \label{L.2.5} Suppose $C$ and $D$ are unital \cstar-algebras and $D$ is separable. 
\begin{enumerate}
\item If $C$ is countably saturated then it is \pDa{} if and only if it is \Dsat. 

\item If $D$ has approximately inner half-flip and $C$ is \pDa{} then every separable $A$ elementarily equivalent to $C$ 
is $D$-absorbing.

\item If $D$ is strongly self-absorbing then $C$ is \pDa{} if and only if every separable $A$ elementarily equivalent to $C$ 
is $D$-absorbing.
\qed
\end{enumerate}
\end{lemma}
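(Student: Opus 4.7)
The plan is to deduce (2) and (3) quickly from Lemma~\ref{L.D.abs}, then devote the main work to (1), where countable saturation bridges the syntactic condition $C\models\bbT_D$ with the semantic condition that every separable subalgebra of $C$ has a copy of $D$ in its relative commutant.

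For (2), $\bbT_D$ consists of sentences, so the assumption $C\models\bbT_D$ transfers to any $A\equiv C$; then separability of $A$ together with approximate inner half-flip of $D$ lets Lemma~\ref{L.D.abs} conclude that $A$ is $D$-absorbing.  For (3), the forward direction is (2); for the converse, apply the downward L\"owenheim--Skolem theorem for metric structures \cite{BYBHU} to produce a separable $A\prec C$.  Then $A\equiv C$, so by hypothesis $A$ is $D$-absorbing, and the strengthened form of Lemma~\ref{L.D.abs} (valid because $D$ is strongly self-absorbing) gives $A\models\bbT_D$; elementarity transfers this back to $C$.

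For the easy direction of (1), assume $C$ is \Dsat{} and fix $\psi_{D,\Delta}\in\bbT_D$ with $\Delta=(p_1,\dots,p_n,d_1,\dots,d_n)$.  Given any $n$-tuple $\bar c$ in the unit ball of $C$, let $X$ be the separable $^*$-subalgebra generated by $\bar c$ and let $\iota\colon D\to X'\cap C$ be a unital injective $^*$-homomorphism supplied by \Dsat.  Setting $y_j:=\iota(d_j)$ annihilates every commutator $[c_i,y_j]$ and, by isometry of $\iota$, forces $\|p_j(\bar y)\|=r_j$ exactly, so $\phi_{D,\Delta}(\bar c,\bar y)^C=0$.  As $\bar c$ was arbitrary, $\psi_{D,\Delta}^C=0$, hence $C\models\bbT_D$.

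The converse direction of (1) is the main step.  Assume $C$ is countably saturated and $C\models\bbT_D$, fix a separable $X\subseteq C$ containing $1_C$, and pick a countable $\bbQ[i]$-$^*$-subalgebra $X_0\subseteq D$ that contains $1_D$ and is norm-dense in $D$.  Consider the type $p(\bar y)$ over $X$ in variables $y_d$ indexed by $d\in X_0$, whose conditions say that each $y_d$ commutes with every element of a countable dense subset of $X$, that $y_{1_D}=1_C$, and that $\|q(y_{d_1},\dots,y_{d_k})\|=\|q(d_1,\dots,d_k)\|$ for every $\bbQ[i]$-$^*$-polynomial $q$ and every tuple from $X_0$.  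To verify that $p$ is consistent with $\Th(C)$, fix a finite fragment $\Sigma_0$ and $\varepsilon>0$; the parameters, variables, and polynomials mentioned in $\Sigma_0$ can be combined, with padding by zeros and harmless repetitions, into some $\Delta=(\bar p,\bar d)$ of common arity $n$.  Since $\psi_{D,\Delta}^C=0$, there exist witnesses $\bar y^*\in C^n$ making $\phi_{D,\Delta}^C$ less than $\varepsilon$ on the padded parameter tuple, and the relevant coordinates of $\bar y^*$ $\varepsilon$-satisfy $\Sigma_0$.  Countable saturation now realizes $p$ by some $(c_d)_{d\in X_0}\subseteq C$; the isometric polynomial-norm clauses force $d\mapsto c_d$ to be a $^*$-homomorphism on $X_0$, which extends uniquely by continuity to a unital injective $^*$-homomorphism $D\to X'\cap C$ (the image lies in the commutant because commutation is a continuous condition in the parameter).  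The one delicate step is the padding used to absorb an arbitrary finite fragment of $p$ into a single $\psi_{D,\Delta}$ of fixed uniform arity, but this is pure bookkeeping and introduces no new idea.
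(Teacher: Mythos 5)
Your proof is correct and takes essentially the route the paper intends: the paper states this lemma without proof as an immediate consequence of its surrounding results, and your argument is exactly that—part (1) is the type-realization-by-countable-saturation argument of Lemma~\ref{L.RC.D} transplanted from the ultrapower to a countably saturated $C$ (with the unit clause absorbed into the polynomial conditions of $\bbT_D$), while parts (2) and (3) follow from Lemma~\ref{L.D.abs}, the fact that the sentences of $\bbT_D$ are preserved under elementary equivalence, and the downward L\"owenheim--Skolem theorem. The only loose ends (indexing the type by a dense subset versus a dense $\bbQ[i]$-subalgebra so that the variables stay in the unit ball, and treating the unit as a constant in the $^*$-polynomials) are normalizations the paper itself elides, and they do not affect the argument.
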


We summarize these results with
\begin{prop} \label{P.1}\label{T.AF} Assume  $A$ and $B$ are separable, unital \cstar-algebras. Then for a nonprincipal ultrafilter $\cF$ on $\bbN$ such that $A^{\cF}$ is countably saturated, the following are equivalent:
\begin{enumerate}
\item $A$ is potentially $B$-absorbing.
\item $A'\cap A^{\cF}$ is \pBa.
\item $B$ embeds into $A'\cap A^{\cF}$.
\end{enumerate}
 \end{prop}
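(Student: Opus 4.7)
The plan is to get $(1) \Leftrightarrow (3)$ for free from Lemma~\ref{L.RC.D}, and then to obtain $(1) \Leftrightarrow (2)$ by unpacking the $\sup\inf$ shape of $\bbT_B$ against the two saturation properties in play: full countable saturation of $A^\cF$ (automatic since $\cF$ is a nonprincipal ultrafilter on $\bbN$) and countable quantifier-free saturation of the relative commutant $A'\cap A^\cF$ from Kirchberg's $\e$-test.

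For $(1) \Rightarrow (2)$, I would fix $\bar c\in A'\cap A^\cF$ and data $\Delta=(p_1,\dots,p_n,d_1,\dots,d_n)$ with $d_j\in B$, and consider the type $\pi(\bar y)$ over $A\cup\{\bar c\}$ in $A^\cF$ asserting that $\|[y_i,a]\|=0$ for all $a\in A$, that $\|[y_i,c_\ell]\|=0$, and that $\|p_j(\bar y)\|=\|p_j(\bar d)\|$ for each $j$. Each finite subtype, involving only finitely many $a$'s, can be $\e$-approximated in $A^\cF$ by applying $\psi_{B,\Delta}^{A^\cF}=0$ (valid since $A\equiv A^\cF\models\bbT_B$) to the parameter tuple $(\bar a,\bar c)$. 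Thus $\pi$ is consistent, and countable saturation of $A^\cF$ yields a realizer $\bar y\in A^\cF$ that lies in $A'\cap A^\cF$, commutes exactly with $\bar c$, and exactly matches the $B$-norms. Consequently $\phi_{B,\Delta}(\bar c,\bar y)^{A'\cap A^\cF}=0$, and since $\bar c$ and $\Delta$ were arbitrary, $A'\cap A^\cF\models\bbT_B$.

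For $(2) \Rightarrow (3)$, I would write $C:=A'\cap A^\cF$ and exploit that $C$ is countably quantifier-free saturated. Picking a countable dense set $\{d_m\}\subseteq B$ with corresponding variables $y_m$, I would consider the quantifier-free type over $\emptyset$ consisting of $\|p(\bar y)\|=\|p(\bar d)\|$ for every $\ast$-polynomial $p$ in finitely many $y_m$'s. Consistency with $\Th(C)$ follows from $C\models\psi_{B,\Delta}$ applied with $\bar x=0$, which trivializes the commutator part of $\phi_{B,\Delta}$ and leaves exactly the required approximate norm conditions. Quantifier-free saturation of $C$ then realizes the type, producing an isometric and hence injective $\ast$-homomorphism $B\hookrightarrow C$.

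The only point requiring care is in $(1)\Rightarrow(2)$: one must confirm that a single instance of $\psi_{B,\Delta}$ with the concatenated parameter $(\bar a,\bar c)$ simultaneously certifies approximate commutation with an arbitrary finite piece of $A$ and with the fixed tuple $\bar c$, so that the approximate realizations it supplies assemble into a genuinely consistent type over the full parameter set $A\cup\{\bar c\}$. Once this is absorbed, each implication is a short application of the appropriate saturation principle together with Lemma~\ref{L.RC.D}.
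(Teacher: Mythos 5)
Your proof is correct and follows essentially the route the paper intends: the proposition is stated as a summary of the preceding results, namely Lemma~\ref{L.RC.D} for the equivalence of (1) and (3), countable saturation of $A^{\cF}$ to push $\bbT_B$ into the relative commutant, and countable quantifier-free saturation of $A'\cap A^{\cF}$ (from \cite{FaHa:Countable}) to realize a copy of $B$ there. The padding of $\Delta$ that you flag is indeed the only point needing care, and it is handled exactly as you indicate.
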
 
 

A few remarks on the optimality of   Proposition~\ref{P.1} are in order. 

(a) There are separable $A$ and $B$ such that $A\models\bbT_B$ but $A\otimes B$ is not isomorphic to $A$. 
If $A$ is an infinite-dimensional unital \cstar-algebra, then $A'\cap A^{\cU}$ is nonseparable and 
in particular $C([0,1])$ embeds into it. However, if the center of $A$ is trivial then $A$ does not absorb $C([0,1])$ 
tensorially. 

(b) There are separable $A$ and $B$ such that $B$ has approximately inner flip and $A\otimes B\cong A$, but $A\not\models \bbT_B$
(see Example~\ref{Ex.O-infty}). However, if $A\cong A\otimes B$ and the isomorphism is approximately unitarily equivalent  to the 
inclusion map on $A$ (approximate unitary equivalence  is typically determined by $K$-theoretic 
invariants; see Theorem~\ref{KP:uniqueness} and \cite{lin2012approximate}), then $A\otimes 1$ is an elementary submodel of $A\otimes B$ and therefore $A\models \bbT_B$. 


(c) If $A\models \bbT_B$ and $B$ is nuclear and separable  then $A\models \bbT_{B^{\otimes\infty}}$ (the nuclearity of $B$ is 
used only to assure that $B^{\otimes\infty}$ is uniquely defined). In particular, 
if $B$ is nuclear then every $B$-saturated \cstar-algebra is $B^{\otimes\infty}$-saturated.  
This was proved by Kirchberg  in \cite{Kirc:Central} for $A'\cap A^{\cU}$. 

(d) There exist nonprincipal filters $\cF$ such that $A^{\cF}$ is 
not necessarily countably saturated (Example~\ref{Ex.Z0}). 
However, many nonprincipal filters $\cF$ on $\bbN$ have the property that $A^{\cF}$ is countably saturated for every \cstar-algebra $A$ \cite[Theorem~2.7]{FaSh:Rigidity}. 

The following is essentially contained in \cite{effros1978c}. 

\begin{lemma} \label{L.flip} Assume $C$ and $D$ are unital \cstar-algebras, 
$D$ has approximately inner half-flip
and $C$ is \Dsat. 
Then all unital $^\ast$-homomorphisms of $D$ into $C$ are approximately unitarily equivalent.
If $C$ is in addition countably quantifier-free saturated, then all unital $^\ast$-homomorphisms from $D$ to $C$ are in fact unitarily conjugate.
\end{lemma}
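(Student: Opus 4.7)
The plan is to run the standard argument from \cite{effros1978c}, adapted to the $D$-saturation hypothesis, and then to promote approximate unitary equivalence to unitary conjugacy using quantifier-free saturation.

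Let $\Phi_1,\Phi_2\colon D\to C$ be two unital $^\ast$-homomorphisms, and set $X$ to be the separable $\mathrm C^*$-subalgebra of $C$ generated by $\Phi_1(D)\cup\Phi_2(D)$. By $D$-saturation of $C$ there is a unital embedding $\Psi\colon D\to X'\cap C$. Since a \cstar-algebra with approximately inner half-flip is nuclear (\cite{effros1978c}), for $i=1,2$ the pair of commuting unital maps $\Phi_i$ and $\Psi$ induces a single unital $^\ast$-homomorphism $\Theta_i\colon D\otimes D\to C$ determined by $\Theta_i(a\otimes b)=\Phi_i(a)\Psi(b)$. Note that $\Theta_i\circ(\id_D\otimes 1_D)=\Phi_i$ and $\Theta_i\circ(1_D\otimes\id_D)=\Psi$.

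By hypothesis, $\id_D\otimes 1_D$ and $1_D\otimes\id_D$ are approximately unitarily equivalent in $D\otimes D$, so applying $\Theta_i$ to a sequence of approximating unitaries yields unitaries in $C$ witnessing that $\Phi_i$ and $\Psi$ are approximately unitarily equivalent. Transitivity of approximate unitary equivalence then gives $\Phi_1\sim_{a.u.}\Phi_2$, which is the first claim.

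For the second statement, suppose in addition that $C$ is countably quantifier-free saturated. Fix a countable dense subset $\{a_n:n\in\bbN\}$ of $D$ and consider the quantifier-free type $p(u)$ over $X$ consisting of the conditions
\[
\|u^*u-1\|=0,\quad \|uu^*-1\|=0,\quad \|u\Phi_1(a_n)u^*-\Phi_2(a_n)\|=0\quad(n\in\bbN).
\]
The approximate unitary equivalence just established shows that every finite subset of $p(u)$ is approximately realized in $C$, so $p(u)$ is consistent. By countable quantifier-free saturation there is a unitary $u\in C$ realizing $p(u)$, and by norm-continuity $u\Phi_1(a)u^*=\Phi_2(a)$ for every $a\in D$.

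The only substantive point, and the one I expect to need some care, is the passage through $D\otimes D$: one must know that the approximate inner half-flip hypothesis applies in the spatial tensor product we are using, which is exactly where the nuclearity of $D$ enters and lets us treat $\Phi_i$ and $\Psi$ as a single homomorphism on $D\otimes D$.
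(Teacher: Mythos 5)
Your proof is correct and follows essentially the same route as the paper: produce a copy $\Psi$ of $D$ commuting with both $\Phi_1(D)$ and $\Phi_2(D)$ via \Dsat{}ness, push the half-flip unitaries of $D\otimes D$ through the induced map into $C$ to get approximate unitary equivalence of each $\Phi_i$ with $\Psi$, and then realize the obvious quantifier-free type by countable quantifier-free saturation to upgrade to genuine unitary conjugacy. Your explicit remark that nuclearity of $D$ (which follows from approximately inner half-flip) is what lets the commuting pair $\Phi_i,\Psi$ define a $^\ast$-homomorphism on $D\otimes_{\mathrm{min}}D$ is a point the paper's proof leaves tacit, and it is correctly placed.
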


\begin{proof} This is almost tautological.
We include a proof for the reader's convenience. 
Let $\Phi_1$ and $\Phi_2$ be unital $^\ast$-homomorphisms of $D$ into $C$. 
By hypothesis, we can find a unital $^\ast$-homomorphism $\Psi$ from $D$ into $C$ whose range commutes with 
ranges of $\Phi_1$ and $\Phi_2$.
It suffices to prove that $\Psi$ and $\Phi_i$ are unitarily conjugate, i.e., to prove the assertion in case
when $\Phi_1$ and $\Phi_2$ have commuting ranges. 

In this case, $d_1\otimes d_2 \mapsto \Phi_1(d_1)\Phi_2(d_2)$ defines 
a map $\Theta:D \otimes D\to C$.
Let $u_n$, for $n \in \bbN$, be a sequence of unitaries in $D \otimes D$ such that $\lim_n \Ad(u_n)(d \otimes 1) = 1 \otimes d$.
Then $\Theta(u_n)$, for $n \in \bbN$, is a sequence of unitaries in $C$ which satisfy
\[ \lim_n \Ad(\Theta(u_n))\Phi_1(d) = \Phi_2(d), \]
as required.

Now assume that $C$ is countably quantifier-free saturated.
 Consider the type $\bt(x)$ consisting of 
the all the quantifier-free conditions $\|xx^*-1\|=0$, $\|x^*x-1\|=0$, $\|x\Phi_2(d) x^*-\Phi_1(d)\|=0$ for $d \in D$.
Since $\Phi_1,\Phi_2$ are approximately unitarily equivalent, any finite subset of this type is approximately realized, so this type is consistent with $\Th(C)$.
Therefore, since $C$ is countably quantifier-free saturated, the type is realized, i.e., there is a unitary $u \in C$ such that $\Ad(u)\circ\Phi_2 = \Phi_1$, as required.
\end{proof} 

\subsection{Proofs of Theorem~\ref{T0} and Theorem~\ref{T-1}}
\label{S.embedding} 

The next few results complete the proof of Theorem~\ref{T-1}. 

 \begin{thm} \label{T1} Assume $C$ is \pDa{} and countably saturated  and $D$ has approximately inner half-flip.  
 Then, fixing an embedding of $D$ in $C$, we have
 \[
 D'\cap C\prec
 C^*(D, D'\cap C)
 \text{ and } 
 C^*(D,D'\cap C)\prec C. 
  \]
 \end{thm}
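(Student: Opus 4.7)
The plan is to deduce both elementary inclusions simultaneously from Lemma~\ref{L.El.1}, taking the roles of the ambient, middle, and smallest algebras there to be $C$, $C^*(D, D'\cap C)$, and $D'\cap C$. This requires two inputs: the hypothesis $D'\cap C \prec C$, and the moveability condition that for every tuple $\bar a$ in $C^*(D,D'\cap C)^m$,
\[
\inf\bigl\{\dist(\alpha(\bar a),(D'\cap C)^m): \alpha\in\Aut(C),\ \alpha\rs C^*(D,D'\cap C)\in\Aut(C^*(D,D'\cap C))\bigr\} = 0.
\]

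For $D'\cap C \prec C$ I would invoke Lemma~\ref{L2+}. Given $\bar e\in(D'\cap C)^n$ and $\bar b\in C^m$, set $X:=C^*(D,\bar e,\bar b)$. Since $C$ is countably saturated and \pDa, Lemma~\ref{L.2.5}(1) produces a unital embedding $\Psi:D\to X'\cap C$, giving a second copy $\Psi(D)\subseteq D'\cap C$ commuting with $\bar e$ and $\bar b$. As in the proof of Lemma~\ref{L.flip}, the approximately inner half-flip together with the commutation of $D$ and $\Psi(D)$ supplies unitaries $v_n\in C^*(D,\Psi(D))$ with $v_n d v_n^*\to\Psi(d)$ (hence also $v_n^*\Psi(d)v_n\to d$) for every $d\in D$; crucially, each $v_n$ commutes with $\bar e$, since $\bar e$ commutes with both $D$ and $\Psi(D)$. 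Consequently the type over a countable dense subset of $\Psi(D)\cup\bar e$ asserting that $u$ is unitary, $u\Psi(d)u^*=d$, and $[u,\bar e]=0$ is consistent, and by countable saturation of $C$ it is realized by some unitary $u\in C$. Then $\alpha:=\Ad(u)$ satisfies $\alpha(\bar e)=\bar e$, while $\alpha(\bar b)=u\bar b u^*$ commutes with $u\Psi(D)u^*=D$ and therefore lies in $(D'\cap C)^m$, delivering the required zero infimum.

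To check the moveability condition, fix $\bar a\in C^*(D,D'\cap C)^m$ and $\e>0$, and approximate the entries of $\bar a$ within $\e$ by a common polynomial expression in finite tuples $\bar d\subseteq D$ and $\bar e\subseteq D'\cap C$. Applying $D$-saturation to the separable $C^*(D,\bar e)$ gives $\Psi:D\to C^*(D,\bar e)'\cap C\subseteq D'\cap C$, and the half-flip yields unitaries $v_n\in C^*(D,\Psi(D))\subseteq C^*(D,D'\cap C)$, so $\Ad(v_n)$ restricts to an automorphism of $C^*(D,D'\cap C)$. Since $\Ad(v_n)(\bar d)\to\Psi(\bar d)\in D'\cap C$ and $\Ad(v_n)(\bar e)=\bar e$, the polynomial approximation of $\bar a$ is carried by $\Ad(v_n)$ to a limit in $(D'\cap C)^m$; together with the $\e$-approximation this gives the required zero infimum, and Lemma~\ref{L.El.1} delivers $D'\cap C\prec C^*(D,D'\cap C)\prec C$. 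The main obstacle is the first step: the approximately inner half-flip yields only approximate intertwiners $v_n$ between $D$ and $\Psi(D)$, not exact ones; the key point is that the $v_n$ already lie in $C^*(D,\Psi(D))$, which commutes with $\bar e$, so countable saturation of $C$ promotes the approximate intertwining to an exact one that additionally fixes the parameter tuple $\bar e$.
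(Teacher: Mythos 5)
Your proposal is correct and takes essentially the same route as the paper: it uses the same decomposition, proving $D'\cap C\prec C$ via Lemma~\ref{L2+}/\ref{L2} by producing a commuting copy $\Psi(D)$ from $D$-saturation, using the half-flip unitaries in $C^*(D,\Psi(D))$ and countable saturation to get an exact conjugating unitary fixing the central tuple, and then verifying the moveability hypothesis of Lemma~\ref{L.El.1} with inner automorphisms $\Ad(v_n)$, $v_n\in C^*(D,\Psi(D))\subseteq C^*(D,D'\cap C)$. The only cosmetic difference is that you construct the exact intertwining unitary directly from a type including the constraints $[u,\bar e]=0$, whereas the paper gets it by applying Lemma~\ref{L.flip} inside the quantifier-free saturated relative commutant $\{\bar a\}'\cap C$.
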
 
 
 \begin{proof} 
 Set $A:=D'\cap C$. 
In order to  prove $A\prec C$ 
we   verify the assumptions of Lemma~\ref{L2}. 
Fix $\bar a\in D'\cap C$ and $\bar b\in C$. 
Since $C$ is \pDa{} we can fix   a unital subalgebra $D_1$ of  $C^*(D,\bar a, \bar b)'\cap C$ isomorphic to $D$. Since 
$\{\bar a\}' \cap C$ is \Dsat{} and countably quantifier-free saturated, 
by Lemma~\ref{L.flip} there exists a unitary $u \in \{\bar a\}' \cap C$ such that 
$D_1=\Ad(u)(D)$.
Set $\alpha:=\Ad(u) \in \Aut(C)$, so that $\alpha(\bar a)=\bar a$. 
By Lemma~\ref{L2} the conclusion follows. 

We now prove $C^*(D, A)\prec C$. Let us write $B:=C^*(D,A)$. 
 Since $A\prec C$, by Lemma~\ref{L.El.1} we need to show 
that for every  $m$ and every  $\bar a\in B$ we have 
\[
\inf\{ \dist(\alpha(\bar a), A^m): \alpha\in \Aut(C), \alpha\rs B\in \Aut(B)\} =0. 
\]
Since $B$ is generated by $D$ and $A$, it suffices to consider the case when $\bar a =(d_1,\dots,d_k, c_1,\dots,c_\ell)$ where $d_i \in D$ and $c_i \in A$.
Since $C$ is \Dsat, so is $A$. Therefore there is a unital copy $D_1$ of $D$ in $A$ that commutes with all $c_i$. 
Since $D$ has approximately inner half-flip, the flip automorphism of $C^*(D,D_1)\cong D\otimes D_1$ is approximately inner, 
and the unitaries witnessing this belong to $C^*(D,D_1)\subseteq B$. 
Therefore for any $\e>0$ we can  find an inner automorphism of $B$ 
 that moves all $d_i$ to within $\e$ of $D_1$ and that does not move any of the  $c_i$. 
 This automorphism extends to an inner automorphism of $C$ as required. 

Since $A\prec C$ and $A\subseteq C^*(D, A)\prec C$, 
$A\prec C^*(D, A)$ follows. 
\end{proof} 
 


By \cite{FaHa:Countable} if $B$ is countably saturated and $A\subseteq B$ is separable then $A'\cap B$ is countably quantifier-free saturated. 
In general, we don't expect that $A' \cap B$ is countably saturated, though we have the following.

\begin{lemma}\label{L.saturatedPre}
Assume $D$ is a separable \cstar-algebra, $D \subseteq B$, such that $B$ is countably saturated and $D'\cap B \prec B$.
Then $D' \cap B$ is countably saturated.
\end{lemma}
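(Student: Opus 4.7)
The plan is to reduce countable saturation of $D'\cap B$ to countable saturation of $B$ by enlarging any given type on $D'\cap B$ with commutation conditions, and then using the elementary embedding hypothesis to transfer the realizing element back.

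Concretely, suppose we are given a separable subset $X \subseteq D'\cap B$ and a type $p(\bar x)$ over $X$, consistent with $\Th(D'\cap B)$, that we wish to realize. Since $D'\cap B \prec B$, we have $\Th(D'\cap B) = \Th(B)$, and moreover any formula with parameters from $X$ evaluates identically in $D'\cap B$ and in $B$. So $p$ is also consistent with $\Th(B)$ in the parameter set $X$. Now fix a countable dense subset $D_0 \subseteq D$ and enlarge $p$ to the type
\[
p'(\bar x) := p(\bar x) \cup \bigl\{\, \|[x_i, d]\| = 0 : i < \omega, \ d \in D_0 \,\bigr\},
\]
which has parameters in the separable set $X \cup D_0 \subseteq B$.

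The main step is to verify that $p'$ is consistent with $\Th(B)$. Given a finite subset $\Sigma_0 \subseteq p'$, let $\Sigma_0^p := \Sigma_0 \cap p$ and let $d_1,\dots,d_k \in D_0$ be the parameters appearing in the commutator conditions of $\Sigma_0$. Fix $\varepsilon > 0$. Since $p$ itself is consistent with $\Th(D'\cap B)$, by the approximation criterion (the proposition on consistency) there exists $\bar b' \in D'\cap B$ with $|\varphi^{D'\cap B}(\bar b',\bar a)| < \varepsilon$ for every $\varphi(\bar x, \bar a) \in \Sigma_0^p$. Using $D'\cap B \prec B$ once more, the same estimate holds when $\varphi$ is evaluated in $B$. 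Crucially, $\bar b'$ commutes exactly with every element of $D$, hence in particular with $d_1,\dots,d_k$, so all commutator conditions in $\Sigma_0$ are satisfied exactly. Thus $\Sigma_0$ is $\varepsilon$-approximated in $B$, proving that $p'$ is consistent with $\Th(B)$.

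Since $B$ is countably saturated and the parameter set is separable, $p'$ is realized by some $\bar b \in B$. The commutator conditions force $\|[b_i, d]\| = 0$ for all $d \in D_0$, and by continuity and density, $\bar b$ commutes with all of $D$; hence $\bar b \in D'\cap B$. Using $D'\cap B \prec B$ a final time, $\bar b$ realizes $p$ in $D'\cap B$ as well. The only obstacle worth noting is the verification of consistency of $p'$, which hinges entirely on the fact that a witness for $p$ drawn from $D'\cap B$ automatically commutes with all of $D$; everything else is a routine application of the hypotheses. \qed
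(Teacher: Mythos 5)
Your proof is correct and follows essentially the same route as the paper: you augment the type with commutator conditions $\|[x_i,d]\|$, verify consistency using the fact that approximate witnesses from $D'\cap B$ commute with $D$ exactly while $D'\cap B\prec B$ transfers the estimates to $B$, realize the enlarged type by countable saturation of $B$, and pull the realization back into $D'\cap B$ by elementarity. The only cosmetic difference is that you index the commutator conditions by a countable dense subset $D_0\subseteq D$ and invoke continuity, whereas the paper uses all of $D$ (harmless either way, since $D$ is separable).
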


\begin{proof}
Let $p$ be a type consistent with $\Th(D' \cap B)$ over a separable subalgebra $A \subseteq D' \cap B$ and let $\bar x$ be the tuple of variables occurring in this type.
Consider the set of formulas $\Sigma$ with parameters in $A \cup D$, consisting of all $\phi(\bar x, \bar a) \in p$ together with the formula $\|[x_i, d]\|$ for each $i$ and each $d \in D$.
Let us show that $\Sigma$ is consistent with $\Th(B)$.

For this, let $\Sigma_0$ be a finite subset of $\Sigma$ and let $\e >0$.
Then, since $p$ is consistent, all formulas in $\Sigma$ that are from $p$ are $\e$-approximated in $D' \cap B$ by some $\bar b \in D' \cap B$.
Since $D' \cap B \prec B$, these conditions are $\e$-approximately realized by $\bar b$ in $B$.
For the formulas of the form $\|[x_i,d]\|$, we also have $\|[b_i,d]\|=0$ since $\bar b \in D'$.
This concludes the proof that $\Sigma$ is consistent with $\Th(B)$, i.e., it is a type in $\Th(B)$.

Since $B$ is countably saturated, $\Sigma$ is realized by some $\bar b$ in $B$.
The definition of $\Sigma$ ensures that $\bar b$ in $D' \cap B$ and that $\bar b$ realizes $p$ in $B$.
Finally, since $D' \cap B \prec B$, it follows that $p$ is realized in $D' \cap B$ by $\bar b$.
\end{proof}

Model-theorists will notice that  the proof of Lemma~\ref{L.saturatedPre} gives a more general statement. 
If $B$ is a countably saturated model and $p$ is a 
1-type over a separable substructure of $B$ such that the set  $C$ of realizations of $p$ 
is an  elementary submodel of $B$, then $C$ is countably saturated.

\begin{coro} \label{C.saturated}  Assume $D$ has approximately inner half-flip
 and $B$ is countably saturated and  \pDa. 
Then $D'\cap B$ is countably saturated. 
\end{coro}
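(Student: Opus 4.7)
The plan is to observe that this corollary is essentially a direct concatenation of the two immediately preceding results, Theorem~\ref{T1} and Lemma~\ref{L.saturatedPre}, so the proof will be short. The first step is to make sense of $D' \cap B$: since $B$ is potentially $D$-absorbing and countably saturated, Lemma~\ref{L.2.5}(1) tells us $B$ is $D$-saturated, and in particular $D$ embeds unitally into $B$. Fix any such embedding and identify $D$ with its image in $B$.

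Next, I would invoke Theorem~\ref{T1} with $C = B$. Its hypotheses are exactly that $C$ is potentially $D$-absorbing and countably saturated, and that $D$ has approximately inner half-flip; its conclusion gives $D' \cap B \prec C^*(D, D' \cap B) \prec B$, and transitivity of $\prec$ then yields $D' \cap B \prec B$.

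At this point the triple $(D, D' \cap B, B)$ satisfies the hypotheses of Lemma~\ref{L.saturatedPre}: $D$ is separable, $D \subseteq B$ is countably saturated, and $D' \cap B \prec B$. Applying that lemma directly concludes that $D' \cap B$ is countably saturated, which is the desired statement.

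There is no genuine obstacle at the level of this corollary; the substantive content was packed into the two preceding results. Theorem~\ref{T1} carried the real model-theoretic weight (using the half-flip to move generators inside the relative commutant via Lemma~\ref{L2}), and Lemma~\ref{L.saturatedPre} is the mechanism by which elementarity of $D' \cap B$ in $B$ is upgraded to countable saturation (by adjoining commutation formulas $\|[x_i,d]\|$ to an arbitrary type and realizing the enlarged type in the saturated $B$). So the only thing the proof needs to do is to cite these two results in order.
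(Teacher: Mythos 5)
Your proposal is correct and follows the paper's own proof exactly: the paper likewise deduces Corollary~\ref{C.saturated} by combining Theorem~\ref{T1} (to get $D'\cap B\prec B$) with Lemma~\ref{L.saturatedPre}. Your additional remark about fixing a unital embedding of $D$ into $B$ via \Dsat ness is a harmless and reasonable piece of bookkeeping that the paper leaves implicit.
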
 

\begin{proof} 
This is a consequence of Lemma \ref{L.saturatedPre} and Theorem \ref{T1}.
%
%
%
\end{proof} 

\begin{proof}[Proof of Theorem~\ref{T-1}]
(1) is by Lemma \ref{L.flip}, (2) is by Theorem \ref{T1}, and (3) is Corollary \ref{C.saturated}.
\end{proof}

We now turn to the proof of Theorem~\ref{T0} -- in fact, a strengthening of it.
We use a nonseparable variant on an intertwining argument.

\begin{thm}[cf.\ {\cite[Proposition 2.3.5]{Ror:Classification}}]
\label{thm:Intertwining}
Suppose $A \subseteq B$ are unital \cstar-algebras with the same unit 
 and that for every separable set $S \subseteq A$ and every $b\in B$ there exists a unitary $u \in S' \cap B$ such that $ubu^*\in A$.  Then 
\begin{enumerate}
\item $A \prec B$,
\item if $B$ is countably saturated then so is $A$, and
\item if $B$ is countably saturated and of density character $\aleph_1$
then $A$ is isomorphic to $B$ and the inclusion $A \to B$ is approximately unitarily equivalent to an isomorphism.
\end{enumerate}
\end{thm}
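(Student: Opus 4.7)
Part~(1) is an application of Lemma~\ref{L2}: given $\bar a \in A^n$ and $\bar b \in B^m$, I must produce $\alpha \in \Aut(B)$ with $\alpha(\bar a) = \bar a$ and $\alpha(\bar b) \in A^m$. Since the hypothesis handles one coordinate of $\bar b$ at a time, I would iterate, successively picking unitaries $u_1, \dots, u_m \in B$ so that $u_j$ commutes with $\{a_1,\dots,a_n\} \cup \{u_i \cdots u_1 b_i u_1^* \cdots u_i^* : i<j\}$ (all of which lie in $A$) and $u_j\, u_{j-1}\cdots u_1 b_j u_1^*\cdots u_{j-1}^*\, u_j^* \in A$. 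Setting $u := u_m\cdots u_1$, the commutation constraints force $u\bar a u^* = \bar a$ and $u b_j u^* = u_j\cdots u_1 b_j u_1^*\cdots u_j^* \in A$, so $\alpha = \Ad(u)$ works. For part~(2), a type $p(\bar x)$ over separable $A_0\subseteq A$ consistent with $\Th(A)$ is, by~(1), consistent with $\Th(B)$, and hence is realized in $B$ by some $\bar b$ via countable saturation. Re-running the iteration from~(1) with each $u_j$ chosen in $A_0' \cap B$ (permissible since the hypothesis allows any separable $S\subseteq A$) yields $u\in A_0'\cap B$ with $u\bar b u^*\in A^m$. Since $\Ad(u)$ fixes $A_0$ pointwise, $u\bar b u^*$ realizes $p$ in $B$ and, by~(1), in $A$.

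For part~(3), the plan is a transfinite back-and-forth intertwining of length $\omega_1$, a nonseparable analogue of Rørdam's Proposition~2.3.5. Fix dense enumerations $\{a_\xi\}_{\xi<\omega_1}$ of $A$ and $\{b_\xi\}_{\xi<\omega_1}$ of $B$. I would construct inductively, for each $\xi<\omega_1$, separable subalgebras $B_\xi\subseteq B$ and $A_\xi\subseteq A$ together with unitaries $u_\xi \in B$ such that $u_\xi B_\xi u_\xi^* = A_\xi$, $\{b_\eta:\eta\le\xi\}\subseteq B_\xi$, $\{a_\eta:\eta\le\xi\}\subseteq A_\xi$, and the compatibility condition $u_\xi c u_\xi^* = u_\eta c u_\eta^*$ holds whenever $c\in B_\eta$ and $\eta<\xi$. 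At a successor stage, to handle the next $b_\xi$ I would apply the hypothesis with $S=A_\xi$ and $b = u_\xi b_\xi u_\xi^*$ to get $v \in A_\xi'\cap B$ with $vbv^*\in A$, and set $u_{\xi+1}:=vu_\xi$; compatibility at the new stage follows from $v \in A_\xi'$, and the back direction adjoining $a_\xi$ is symmetric. The resulting map $\phi:B\to A$ defined on $\bigcup_\xi B_\xi$ by $\phi(b) := u_\xi b u_\xi^*$ whenever $b\in B_\xi$ is well-defined by compatibility, extends by continuity to an isometric $^*$-isomorphism, and its inverse $\phi^{-1}:A\to B$ is the pointwise limit of $\Ad(u_\xi^*)$ on the dense set $\bigcup_\xi A_\xi$, witnessing approximate unitary equivalence of $\phi^{-1}$ to the inclusion $A\hookrightarrow B$.

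The main obstacle is the treatment of limit stages $\xi<\omega_1$, where no actual infinite product of unitaries exists in $B$. Here countable saturation of $B$ serves as the substitute: the type saying ``$u$ is a unitary with $u c u^* = u_\eta c u_\eta^*$ for all $c\in B_\eta$, $\eta<\xi$'' has parameters in the separable subalgebra $\bigcup_{\eta<\xi}B_\eta$ and is finitely realized by $u_\eta$ (for $\eta$ sufficiently large), hence is realized by some $u_\xi\in B$, closing the induction.
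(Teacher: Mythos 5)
Your proposal is correct and follows essentially the same route as the paper: part (1) via Lemma~\ref{L2} with an iterated application of the hypothesis, part (2) by realizing the type in $B$ and conjugating the realization into $A$ by a unitary commuting with the separable parameter set, and part (3) by a length-$\omega_1$ intertwining that applies the hypothesis at successor stages and uses countable saturation of $B$ at limit stages. One small caution: the ``back'' step in (3) is not a symmetric use of the (one-sided) hypothesis but simply the adjunction of $u_{\xi+1}^* a_\xi u_{\xi+1}$ to $B_{\xi+1}$, which is exactly how the paper handles surjectivity (via its elements $d_\mu = u_\lambda^* a_\mu u_\lambda$), so your construction goes through as set up.
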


 Any infinite-dimensional \cstar-algebra is unstable by the main result of \cite{FaHaSh:Model2} which implies that any countably saturated \cstar-algebra is of density character continuum.  So it follows that the assumptions of (3) above imply the continuum hypothesis. 


\begin{proof} (1) follows from Lemma \ref{L2}.  Once we have (1), then, since $B$ is countably saturated, any type over a separable submodel $C$ of $A$ is realized in $B$ by some $b$.  But then by assumption, there is a unitary $u \in C' \cap B$ such that $ubu^* \in A$ and so the type is realized in $A$ as well.  In (3), the fact that there is an isomorphism at all follows from Theorem \ref{T.Keisler}.  In order to get approximate unitary equivalence, we do the following.
 
Enumerate dense subsets of $A,B$ as $(a_\lambda)_{\lambda < \omega_1}$ and $(b_\lambda)_{\lambda < \omega_1}$.
By transfinite induction we will show that there exists $c_\lambda \in A, d_\lambda \in B$ and a unitary $u_\lambda \in B$ for each $\lambda < \omega_1$, such that, for each $\mu < \lambda$,
\begin{equation}
\label{eq:IntertwiningIndHypothesis}
 u_\lambda b_\mu u_\lambda^* = c_\mu \quad \text{and} \quad u_\lambda d_\lambda u_\lambda^* = a_\mu.
\end{equation}
Note: at stage $\lambda$, we construct $u_\lambda$ and, when applicable, $c_{\lambda-1}$ and $d_{\lambda-1}$.

Let us describe stage $\lambda$, depending on whether $\lambda$ is a successor or limit ordinal.

If $\lambda = \lambda'+1$, we set $S:=\{c_\mu, a_\mu \mid \mu < \lambda\}$ (which is countable), and use the hypothesis to find a unitary $v \in B \cap S'$ such that $vu_{\lambda'}b_{\lambda'} u_{\lambda'}v^* \in A$.
We set $u_\lambda := vu_{\lambda'}$ and then set
\[ c_{\lambda'} := u_\lambda b_{\lambda'} u_\lambda^* \in A, \quad d_{\lambda'} := u_\lambda^* a_{\lambda'} u_\lambda^*. \]
For $\mu < \lambda$, either $\mu = \lambda'$ in which case clearly \eqref{eq:IntertwiningIndHypothesis} holds, or else $\mu < \lambda'$, in which case \eqref{eq:IntertwiningIndHypothesis} still holds since $v$ commutes with $S$.

Now, suppose $\lambda$ is a limit ordinal.
Consider the set $\Sigma$ of formulas in the variable $u$, consisting of $\|u^*u-1\|$, $\|uu^*-1\|$, $\|u b_\mu u^* -c_\mu\|$ and $\|ud_\mu u^* -a_\mu\|$, for $\mu < \lambda$.
These formulas are over a countable set of parameters, since $\lambda < \omega_1$, and the induction hypothesis shows that $\Sigma$ is consistent.
By countable saturation, it follows that there exists a unitary $u_\lambda \in B$ realizing $p$; this says that \eqref{eq:IntertwiningIndHypothesis} holds.

This concludes the construction (existence proof) of $c_\lambda, d_\lambda$ and $u_\lambda$.

We now define $\Phi:A \to B$ by $\Phi(a_\lambda) = d_\lambda$.
By construction $\Phi$ is approximately unitarily equivalent to the inclusion $A \to B$, and it follows that it is an injective $^\ast$-homomorphism.
We need only show that it is surjective, and to do this, we show that $\Phi(c_\lambda) = b_\lambda$.
Certainly, there exists $\mu > \lambda$ such that $a_\mu = c_\lambda$, and therefore,
\[ \Phi(c_\lambda) = d_\mu = u_{\mu+1}^* a_\mu u_{\mu+1} = u_{\mu+1}^* c_\lambda u_{\mu+1} = b_\lambda. \]
\end{proof}

\begin{coro}
\label{T0+} 
Suppose that $D$ is a unital separable \cstar-algebra with approximately inner half-flip,   
 $C$ is a countably saturated unital \pDa{} \cstar-algebra, and $D$ is a unital subalgebra of 
 $ C$.
Then 
\begin{enumerate}
\item $C^*(D,D' \cap C) \otimes 1 \prec C \otimes D$ and the inclusion is approximately unitarily equivalent to an isomorphism between $C^*(D,D' \cap C)$ and $(D' \cap C) \otimes D$,
\item $D' \cap C \prec C$, and
\item if the continuum hypothesis holds and $C$ has density character $\aleph_1$ then this inclusion is approximately unitarily equivalent to an isomorphism.
\end{enumerate}

In particular, if $\cF$ is a filter on $\bbN$ such that $\asymptot A\cF$ is countably saturated and $A$ is potentially $D$-absorbing
then 
\[
D'\cap \asymptot A\cF \prec A^\cF,
\] 
and if the continuum hypothesis holds then this embedding is approximately unitarily equivalent to an isomorphism.
\end{coro}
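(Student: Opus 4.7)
The plan is to derive (2) directly from Theorem~\ref{T1}, prove (1) by combining the approximately inner half-flip with a Tarski--Vaught style argument (Lemma~\ref{L2}), and prove (3) via Theorem~\ref{thm:Intertwining}. For (2), Theorem~\ref{T1} gives $D' \cap C \prec C^*(D, D' \cap C) \prec C$, whence $D' \cap C \prec C$ by transitivity.

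For (1), I would first observe that since $D$ is simple and nuclear (both consequences of the approximately inner half-flip) and commutes with $D' \cap C$ inside $C$, the natural multiplication map $\Theta\colon (D' \cap C) \otimes D \to C^*(D, D' \cap C)$ sending $c \otimes d \mapsto cd$ is an isomorphism (surjectivity is clear, and injectivity follows from simplicity of $D$ via the standard classification of ideals of $E \otimes_{\min} D$ as $J \otimes D$). Let $(w_n)_n$ be unitaries in $D \otimes D \subseteq C \otimes D$ witnessing the half-flip, so $\Ad(w_n)(d \otimes 1) \to 1 \otimes d$ for all $d \in D$. Because $D' \cap C$ commutes elementwise with $D \otimes D$, each $w_n$ commutes with $(D' \cap C) \otimes 1$, and therefore $\Ad(w_n)(cd \otimes 1) = (c \otimes 1)\,\Ad(w_n)(d \otimes 1) \to c \otimes d$ for $c \in D' \cap C$, $d \in D$. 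This exhibits the inclusion $x \mapsto x \otimes 1$ of $C^*(D, D' \cap C)$ into $C \otimes D$ as approximately unitarily equivalent to the composition of $\Theta^{-1}$ with the inclusion of $(D' \cap C) \otimes D$ into $C \otimes D$. By Lemma~\ref{L.elementary}, to conclude $C^*(D, D' \cap C) \otimes 1 \prec C \otimes D$ it now suffices to establish $(D' \cap C) \otimes D \prec C \otimes D$.

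For this I would apply Lemma~\ref{L2}. Given $\bar a \in (D' \cap C) \otimes D$ and $\bar b \in C \otimes D$, choose separable $A_1 \subseteq D' \cap C$ and $A_0 \subseteq C$ with $\bar a \in A_1 \otimes D$ and $\bar b \in A_0 \otimes D$. Since $C$ is \Dsat{} (Lemma~\ref{L.2.5}(1)), pick a unital copy $D_1 \subseteq (A_0 \cup A_1)' \cap C$ of $D$. As $A_1 \subseteq D' \cap C$ commutes with $D$, both $D$ and $D_1$ are unital copies of $D$ in $E := A_1' \cap C$; moreover $E$ is countably quantifier-free saturated by \cite{FaHa:Countable} and is \Dsat{} since $C$ is. Lemma~\ref{L.flip} therefore supplies a unitary $u \in E$ with $u D_1 u^* = D$. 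Setting $\alpha := \Ad(u \otimes 1) \in \Aut(C \otimes D)$, one verifies $\alpha(\bar a) = \bar a$ (using $u \in A_1'$) and $\alpha(\bar b) \in (u A_0 u^*) \otimes D \subseteq (D' \cap C) \otimes D$ (using $u^* D u = D_1 \subseteq A_0'$), as required by Lemma~\ref{L2}.

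For (3), assuming CH and that $C$ has density character $\aleph_1$, I would apply Theorem~\ref{thm:Intertwining} to the inclusion $D' \cap C \subseteq C$. Its hypothesis is verified by a one-dimensional version of the previous argument: given separable $S \subseteq D' \cap C$ and $b \in C$, a copy $D_1$ of $D$ in $(S \cup \{b\})' \cap C$ exists by \Dsat-ness of $C$, and Lemma~\ref{L.flip} applied in the \Dsat, countably quantifier-free saturated algebra $S' \cap C$ yields $u \in S' \cap C$ with $u D_1 u^* = D$, so that $u b u^* \in D' \cap C$. Theorem~\ref{thm:Intertwining}(3) then produces the required isomorphism. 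The ``in particular'' statement follows by specialising $C$ to $A^\cF$, which is countably saturated by hypothesis and \pDa{} by Proposition~\ref{T.AF}; under CH the density character of $A^\cF$ equals $\aleph_1$, so (2) and (3) apply. The main obstacle is the elementarity $(D' \cap C) \otimes D \prec C \otimes D$: elementary inclusions need not survive tensoring, and the argument requires coordinating \Dsat-ness of $C$ with the unitary uniqueness of $D$-embeddings (Lemma~\ref{L.flip}) inside the auxiliary algebra $A_1' \cap C$.
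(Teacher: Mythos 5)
Your proof is correct and follows essentially the same route as the paper: the conjugation trick (\Dsat-ness of $S'\cap C$ plus Lemma~\ref{L.flip}) feeding into Theorem~\ref{thm:Intertwining} for (2)--(3), and the half-flip unitaries in $D\otimes D$ together with the elementarity of $(D'\cap C)\otimes D$ in $C\otimes D$ for (1). Your only deviations are cosmetic: you get (2) from Theorem~\ref{T1} by transitivity, and you spell out via Lemma~\ref{L2} the step $(D'\cap C)\otimes D\prec C\otimes D$ that the paper dispatches with ``from the proof of (2)''.
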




\begin{proof} We prove (2) and (3) first.
For $D' \cap C$ and $C$, we will use Theorem \ref{thm:Intertwining}.
Therefore, let $S \subseteq D' \cap C$ be separable and let $b \in C$.
Let $\Psi:D \to S' \cap D' \cap C$ be an embedding.
$S' \cap C$ is \Dsat{} and countably quantifier-free saturated, so by Lemma \ref{L.flip}, there exists a unitary $u \in S' \cap C$ such that $u^*Du=\Psi(D)$.
It follows that $[ubu^*,D] = u[b,u^*Du]u^* = u[b,\Psi(D)]u^* = 0$, i.e., $ubu^* \in D' \cap C$.
This verifies the hypotheses of Theorem \ref{thm:Intertwining} and so $D' \cap C \prec C$ and since $C$ is countably saturated, so is $D'\cap C$.
If $C$ has density character $\aleph_1$ and the continuum hypothesis holds then inclusion $D' \cap C \to C$ is approximately unitarily equivalent to an isomorphism.

For $C^*(D' \cap C, D) \otimes 1$ and $C \otimes D$, note that since $D$ has approximately inner half-flip, the embedding $C^*(D' \cap C, D) \otimes 1_D \to C \otimes D$ is approximately unitarily equivalent,  by unitaries in $D \otimes D \subseteq C \otimes D$, to an isomorphism $C^*(D' \cap C, D) \to (D' \cap C) \otimes D$.
From the proof of (2), $(D' \cap C) \otimes D \prec C \otimes D$.
Composing these, the inclusion $C^*(D' \cap C, D) \otimes 1 \to C \otimes D$ is an elementary map.
\end{proof}


The previous result can be somewhat refined. 
If $A$ is a \cstar-algebra and~$B$ is a \cstar-subalgebra, then we can consider  the pair $(A,B)$ 
as a model in an appropriate expansion of the language of \cstar-algebras. 
To the standard language we add the unary predicate $\dist_B$, the distance from $a\in A$ to $B$. 
This function is 1-Lipshitz and therefore can be added to the language.

\begin{coro}
Assume $D$ is a separable \cstar-algebra with approximately inner half-flip, $C$ is a \cstar-algebra that is countably saturated, \pDa, and that $D \subseteq C$.
Then $(C^*(D,D'\cap C),D' \cap C)$ can be elementarily embedded into $(C \otimes D,C \otimes 1)$.
\end{coro}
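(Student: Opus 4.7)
The plan is to extend Corollary~\ref{T0+}. Its proof shows that the embedding $C^*(D, D'\cap C)\to C\otimes D$, $a\mapsto a\otimes 1$, is approximately unitarily equivalent, via unitaries in $D\otimes D$, to an isomorphism $\Theta\colon C^*(D, D'\cap C)\to (D'\cap C)\otimes D$. Since each $c\in D'\cap C$ commutes with $D\otimes D\subseteq C\otimes D$, the conjugating unitaries fix $c\otimes 1$, so $\Theta(c)=c\otimes 1$ and $\Theta$ is an isomorphism of pairs
\[
(C^*(D, D'\cap C),\, D'\cap C)\xrightarrow{\cong}((D'\cap C)\otimes D,\, (D'\cap C)\otimes 1).
\]
It therefore suffices to show that the inclusion $((D'\cap C)\otimes D,\, (D'\cap C)\otimes 1)\hookrightarrow (C\otimes D,\, C\otimes 1)$ is elementary in the expanded language containing the predicate $\dist_B$.

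I would proceed by a pair analogue of Lemma~\ref{L2+}. Given finite tuples $\bar a$ in $(D'\cap C)\otimes D$ and $\bar b$ in $C\otimes D$, and $\e>0$, the aim is to construct an automorphism $\alpha$ of $C\otimes D$ satisfying $\alpha(C\otimes 1)=C\otimes 1$, $\|\alpha(\bar a)-\bar a\|<\e$, and $\dist(\alpha(\bar b),(D'\cap C)\otimes D)<\e$. Since such an $\alpha$ automatically preserves the $\dist_B$-predicate (being an automorphism of the pair), the proof of Lemma~\ref{L2+} transfers verbatim to the expanded language. The same automorphism construction, applied with $\bar a=x$ and $\bar b=c\otimes 1$ for arbitrary $c\in C$, simultaneously establishes the a priori nontrivial substructure identity $\dist(x,(D'\cap C)\otimes 1)=\dist(x, C\otimes 1)$ for $x\in(D'\cap C)\otimes D$, which is what allows the inclusion of pairs to be regarded as a substructure inclusion in the expanded language to begin with.

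To build $\alpha$, I would reuse the mechanism from the proofs of Theorems~\ref{T1} and~\ref{T0+}: pick separable $A_0\subseteq D'\cap C$ and $B_0\subseteq C$ so that $\bar a$ lies within $\e$ of $A_0\otimes D$ and $\bar b$ within $\e$ of $B_0\otimes D$; potential $D$-absorption of $C$ furnishes a unital copy $D_1\cong D$ inside $C^*(D, A_0, B_0)'\cap C$; the relative commutant $A_0'\cap C$ is countably quantifier-free saturated by \cite{FaHa:Countable} and \Dsat{} by Lemma~\ref{L.2.5}, so Lemma~\ref{L.flip} supplies a unitary $u\in A_0'\cap C$ with $\Ad(u)(D_1)=D$. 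Set $\alpha:=\Ad(u)\otimes\id_D$. This fixes $C\otimes 1$ setwise, fixes $A_0\otimes D$ pointwise (so moves $\bar a$ by at most $O(\e)$), and sends $B_0\otimes D$ into $(D'\cap C)\otimes D$ because $B_0\subseteq D_1'\cap C$ forces $\Ad(u)(B_0)\subseteq \Ad(u)(D_1)'\cap C=D'\cap C$. The main technical point to check carefully is that Lemma~\ref{L2+}'s Tarski--Vaught argument lifts cleanly to the $\dist_B$-expanded language; this reduces to the observation that automorphisms of $C\otimes D$ preserving $C\otimes 1$ setwise preserve $\dist_B$, which holds for $\alpha$ by construction.
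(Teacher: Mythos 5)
Your proposal is correct and follows essentially the same route as the paper: the paper's proof reduces the corollary to the pair isomorphism $(C^*(D,D'\cap C),D'\cap C)\cong((D'\cap C)\otimes D,(D'\cap C)\otimes 1)$ (via the half-flip unitaries in $D\otimes D$, which fix $(D'\cap C)\otimes 1$) together with the pair elementarity $((D'\cap C)\otimes D,(D'\cap C)\otimes 1)\prec(C\otimes D,C\otimes 1)$, citing ``the proof of the previous corollary'' for the latter. Your argument simply makes that citation explicit, running the Lemma~\ref{L2+}/Lemma~\ref{L.flip} automorphism construction with unitaries of the form $u\otimes 1$ and checking preservation of the $\dist$-predicate and the substructure identity, which is exactly what the paper's appeal to Corollary~\ref{T0+} amounts to.
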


\begin{proof}
Looking at the proof of the previous corollary, we see that 
\[
( (D' \cap C) \otimes D, (D' \cap C) \otimes 1) \prec (C \otimes D, C \otimes 1)
\]
 We also have
\[
( C^*(D' \cap C,D), D' \cap C) \cong ( (D' \cap C) \otimes D, (D' \cap C) \otimes 1)
\]
 and so the conclusion follows.
\end{proof}

 \subsection{A model-theoretic characterization of strongly self-absorbing algebras} 
\label{S.characterization}

The hyperfinite II$_1$ factor is  the only II$_1$ factor with a separable predual all of whose embeddings
into its ultrapower are conjugate (\cite{Jung:Amenability}). 
We give a similar characterization of strongly self-absorbing algebras.

\begin{thm} \label{T.ssa}  
A \cstar-algebra  $D$ is strongly self-absorbing if and only if the following hold.
\begin{enumerate}
\item \label{I.T.ssa1} 
All unital $^\ast$-homomorphisms of $D$ into its ultrapower $D^{\cU}$ are unitarily conjugate, and 
\item \label{I.T.ssa2} 
 $D\equiv D\otimes D$. 
\end{enumerate}
\end{thm}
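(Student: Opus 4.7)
For the forward direction, (2) is immediate from $D \otimes D \cong D$. For (1), I would argue that $D$ being $D$-absorbing together with approximately inner half-flip gives $D \models \bbT_D$ by Lemma~\ref{L.D.abs}; since $D \equiv D^{\cU}$ this transfers to $D^{\cU}$, so Lemma~\ref{L.2.5}(1) makes $D^{\cU}$ both countably saturated and \Dsat, and Lemma~\ref{L.flip} yields unitary conjugacy of any two unital $^\ast$-homomorphisms $D \to D^{\cU}$.

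For the harder reverse direction, assume (1) and (2); the strategy is to first extract $D \models \bbT_D$ and approximately inner half-flip for $D$, and then upgrade to strong self-absorption. The first move is to produce an elementary embedding $\Phi\colon D \otimes D \to D^{\cU}$: such a $\Phi$ exists because by (2), $D \otimes D \equiv D \equiv D^{\cU}$, $D \otimes D$ is separable, and $D^{\cU}$ is countably saturated. Applying (1) to the two unital embeddings $\Phi \circ (\id_D \otimes 1_D)$ and the diagonal $\iota_0\colon D \to D^{\cU}$ produces a unitary $u \in D^{\cU}$ such that $\Phi':= \Ad(u) \circ \Phi$ is still elementary and satisfies $\Phi'(d \otimes 1) = d$ for every $d \in D$. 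Since $1 \otimes D$ commutes with $D \otimes 1$, the map $\Psi := \Phi'(1 \otimes \,\cdot\,)$ lands in $D' \cap D^{\cU}$, so by Lemma~\ref{L.RC.D} we obtain $D \models \bbT_D$.

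The key step, which I expect to be the main obstacle, is deriving approximately inner half-flip for $D$. Applying (1) a second time to the pair $\iota_0$ and $\Psi$ produces a unitary $v \in D^{\cU}$ with $v d v^* = \Psi(d) = \Phi'(1 \otimes d)$ for all $d \in D$. Consider the type $p(x)$ over $D \otimes D$ consisting of $\|x^*x - 1\|$, $\|xx^* - 1\|$, and $\|x(d \otimes 1)x^* - (1 \otimes d)\|$ for $d$ in a countable dense subset of $D$; via $\Phi'$ its parameters become $d$ and $\Psi(d)$, and the image of $p$ is realized in $D^{\cU}$ by $v$. Elementarity of $\Phi'$ therefore makes $p$ consistent with $\Th(D \otimes D)$, which by the definition of consistency says: for every finite $F \subseteq D$ and every $\e > 0$ there is a unitary $w \in D \otimes D$ with $\|w(d \otimes 1)w^* - (1 \otimes d)\| < \e$ for $d \in F$. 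That is precisely approximately inner half-flip for $D$.

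The final step is the upgrade from ``approximately inner half-flip and $D \models \bbT_D$'' to strong self-absorption. Lemma~\ref{L.D.abs} gives $D \otimes D \cong D$; fix any isomorphism $\phi\colon D \to D \otimes D$. From (2) and $D \models \bbT_D$ we get $D \otimes D \models \bbT_D$, so $(D \otimes D)^{\cU}$ is countably saturated and \pDa. Theorem~\ref{T-1}(1) then makes all unital $^\ast$-homomorphisms $D \to (D \otimes D)^{\cU}$ unitarily conjugate. Applied to the diagonal composites of $\phi$ and of $\id_D \otimes 1_D$, and lifting the conjugating unitary in $(D \otimes D)^{\cU}$ to a sequence of unitaries in $D \otimes D$, this gives $\phi \sim_u \id_D \otimes 1_D$; since $\phi$ is an isomorphism, this is exactly the definition of $D$ being strongly self-absorbing.
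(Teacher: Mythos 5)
Your proof is correct and takes essentially the same route as the paper: the forward direction via Lemma~\ref{L.flip} (after noting $D^{\cU}$ is $D$-saturated), and the converse by placing an elementary copy of $D\otimes D$ in $D^{\cU}$ conjugated so that $D\otimes 1$ is the diagonal, extracting approximately inner half-flip from hypothesis (1) plus elementarity, obtaining $D\cong D\otimes D$ from Theorem~\ref{T.RC} (via $\bbT_D$ and Lemma~\ref{L.D.abs}), and finally showing the isomorphism is approximately unitarily equivalent to $\id_D\otimes 1_D$ by a unitary-conjugacy-in-an-ultrapower argument. The only cosmetic difference is the last step, where you invoke Theorem~\ref{T-1}(1) in $(D\otimes D)^{\cU}$ and lift the unitary to a sequence, whereas the paper simply reapplies hypothesis (1) inside $D^{\cU}$ and uses $D\otimes D\prec D^{\cU}$.
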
 

\begin{proof}
 Assume $D$ is strongly self-absorbing Then $D\cong D\otimes D$ and therefore \eqref{I.T.ssa2} holds, 
and \eqref{I.T.ssa1} holds by Lemma~\ref{L.flip}. 

Now assume \eqref{I.T.ssa1} and \eqref{I.T.ssa2}.  By \eqref{I.T.ssa2}, $D \otimes D$ embeds elementarily into $D^\cU$ and by \eqref{I.T.ssa1}, $D \otimes 1$ is unitarily conjugate to the diagonal copy of $D$ embedded into $D^\cU$ so we may assume
\[
D \prec D \otimes D \prec D^\cU
\]
where the first embedding is $D \to D \otimes 1$. Now $1 \otimes D \subseteq D' \cap D^\cU$ and by \eqref{I.T.ssa1}, $D \otimes 1$ and $1 \otimes D$ are unitarily conjugate so $D$ has approximately inner half-flip.  So by Theorem \ref{T.RC}, we have $D \cong D \otimes D$ say by an isomorphism $\Phi$.  But again by \eqref{I.T.ssa1}, $\Phi$ can be implemented by a unitary in $D^\cU$.  Since $D \otimes D \prec D^\cU$, $\Phi$ is approximately unitarily equivalent to the map $D \to D \otimes 1$ and so $D$ is strongly self-absorbing.
\end{proof} 

Unlike the case of II$_1$ factors, 
the assumption that all unital *-ho\-mo\-mor\-phi\-sms of $A$ into its ultrapower are unitarily conjugate alone 
does not imply that $A$ is strongly self-absorbing or even that it is self-absorbing (i.e., that $A\otimes A\cong A$). For example, every UHF algebra has this property 
so it suffices to take a UHF algebra that is not of infinite type. 
There are even \cstar-algebras without approximately inner half-flip  such that all of their $^\ast$-homomorphisms
into their ultrapowers are unitarily conjugate  (one such algebra is $\cO_3$; see Example~\ref{Ex.O3}). 
  If $D$ has approximately inner half-flip  then 
  any two unital $^\ast$-homomorphisms of $D$ into $D^{\cU}$ are unitarily conjugate in $D^{\cU}\otimes D$
(by the proof of Lemma~\ref{L.flip}) but not necessarily in $D^{\cU}$ (Example~\ref{Ex.O-infty}).

\subsection{Classification} \label{S.classification}
Recall that a model $D$ is \emph{prime} if for every $A\equiv D$ there is an elementary embedding of $D$ into $A$. 
For separable models this is equivalent to being \emph{atomic} (see \cite{BYBHU}). In order to avoid confusion with 
terminology established in \cstar-algebras (and since we are using the concept only for separable algebras) we shall refer to prime models as atomic models. 

\begin{prop} \label{P.atomic} Every strongly self-absorbing algebra $D$ is an atomic model of its theory. 
\end{prop}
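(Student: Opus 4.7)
The plan is to show that every strongly self-absorbing $D$ admits an elementary embedding into any $A \equiv D$. By the downward L\"owenheim--Skolem theorem it suffices to handle separable $A$, so assume $A$ is separable.

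First, because $D$ is $D$-absorbing we have $D \models \bbT_D$, and elementary equivalence gives $A \models \bbT_D$. Applying Lemma~\ref{L.D.abs} (whose hypotheses are met since $D$ has approximately inner half-flip) yields that $A$ is $D$-absorbing, and in particular there is a unital $*$-homomorphism $\Phi\colon D \to A$. It suffices to verify that $\Phi$ is elementary.

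Next, I would invoke Keisler--Shelah for metric structures (cf.\ \cite{BYBHU}) to fix an ultrafilter $\cV$, on some index set, such that $D^\cV \cong A^\cV$. Composing the elementary diagonal embedding $D \prec D^\cV$ with this isomorphism produces one elementary map $\Psi\colon D \to A^\cV$. Being an ultrapower by a countably incomplete ultrafilter, $A^\cV$ is countably saturated, and since $A^\cV \models \bbT_D$ it is \pDa, hence \Dsat{} by Lemma~\ref{L.2.5}(1). Therefore Lemma~\ref{L.flip} applies: every unital $*$-homomorphism $D \to A^\cV$ is unitarily conjugate to $\Psi$, and by Lemma~\ref{L.elementary} every such $*$-homomorphism is elementary.

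Finally, the diagonal embedding $A \prec A^\cV$ sends $\Phi$ to a unital $*$-homomorphism $\tilde\Phi\colon D \to A^\cV$ whose image lies inside the diagonal copy of $A$. By the previous step $\tilde\Phi$ is elementary, and since $A \prec A^\cV$ we obtain $\phi^D(\bar d) = \phi^{A^\cV}(\tilde\Phi(\bar d)) = \phi^A(\Phi(\bar d))$ for every formula $\phi$ and every tuple $\bar d \in D$, proving that $\Phi\colon D \to A$ is elementary. The main potential obstacle is the appeal to Keisler--Shelah in continuous logic; the alternative route is to assume CH and invoke Theorem~\ref{T.Keisler} to get $D^\cU \cong A^\cU$ for a free ultrafilter $\cU$ on $\bbN$, which serves the same purpose in the argument above.
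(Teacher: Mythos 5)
Your argument is correct, and its engine is the same as the paper's: obtain one elementary unital copy of $D$ in a countably saturated, \pDa{} algebra, use Lemma~\ref{L.2.5}(1) and Lemma~\ref{L.flip} to see that \emph{all} unital $^\ast$-homomorphisms of $D$ into that algebra are unitarily conjugate (hence elementary, by Lemma~\ref{L.elementary} or because $\Ad u$ is an automorphism), and then pull the conclusion down to the separable $A\equiv D$ through an elementary inclusion. The only genuine difference is how the reference elementary copy is produced. The paper uses the standard universality property of countably saturated models: since $A$ is separable and $A\equiv D$, one may assume $A\prec D^{\cU}$ for a nonprincipal $\cU$ on $\bbN$, and the diagonal copy of $D$ in $D^{\cU}$ is the elementary reference point. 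You instead invoke the continuous Keisler--Shelah theorem to get $D^{\cV}\cong A^{\cV}$; this works, but it is heavier than needed and introduces avoidable caveats: the ultrafilter $\cV$ lives on an uncountable index set (outside the paper's stated convention of ultrafilters on $\bbN$), and countable saturation of $A^{\cV}$ requires you to note that $\cV$ is countably incomplete. Your CH fallback via Theorem~\ref{T.Keisler} is also unnecessary and would weaken an unconditional statement if it were the only route. The streamlined version of your own argument is simply: $D$ is separable and $D\equiv A\equiv A^{\cU}$, and $A^{\cU}$ is countably saturated, so $D$ embeds elementarily into $A^{\cU}$; take that as your $\Psi$ and the rest of your proof goes through verbatim, with no Keisler--Shelah and no CH. In short: correct, same key lemmas, but replace the Keisler--Shelah/CH step by the elementary universality of countably saturated models, which is exactly the shortcut the paper takes.
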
 

\begin{proof} 
By the Downward L\"owenheim--Skolem theorem, it will suffice to show that 
$D$ elementarily embeds into every separable $A\equiv D$.  Fix such an $A$. Then since $A \equiv D$ we have $A$ is $D$-absorbing and we can assume that $A \prec D^\cU$.  But since $A \cong A \otimes D$, we also have an embedding of $D$ into $A$.  Since all copies of $D$ embedded in $D^\cU$ are conjugate to the diagonal embedding, any embedding of $D$ into $A$ is elementary.
\end{proof}   

It is well-known that elementarily equivalent separable atomic models are isomorphic, but we shall show that in case of 
strongly self-absorbing algebras a stronger result is true.

A quantifier-free formula $\psi(\bar x)$ is called an $\bbR^+$-formula if for all \cstar-algebras $A$ and all $\bar a \in A$, $\psi^A(\bar a) \geq 0$.
A sentence $\phi$ is \emph{universal} if it is of the form $\sup_{\bar x}\psi(\bar x)$ for some quantifier-free $\bbR^+$ -formula $\psi(\bar x)$. 
The \emph{universal theory} of a \cstar-algebra
is 
\[
\ThA(A)=\{\phi: \phi^A=0\text{ and $\phi$ is universal}\}. 
\]
This terminology is adopted in order to match discrete first order logic: for a universal sentence $\phi = \sup_{\bar x} \psi(\bar x)$ and any \cstar-algebra $A$,
$\phi^A = 0$ if and only if for all $\bar a \in A, \psi^A(\bar a) = 0$.
If $A$ is a subalgebra of $B$ then clearly $\ThA(A)\supseteq \ThA(B)$. 
For separable $A$ and countably saturated $C$, one has that $A$ embeds into $C$ if and only if 
$\ThA(A)\supseteq \ThA(C)$ (\cite{FaHaSh:Model3}). 

The following result was announced in \cite{Fa:Logic}. 

\begin{thm}\label{T.Class}  Assume $D$ and $E$ are strongly self-absorbing.
\begin{enumerate}
\item \label{T.Class.1} $D$ is $E$-absorbing if and only if $\ThA(D) \subseteq \ThA(E)$. 
\item \label{T.Class.2} $D\cong E$ if and only if $\ThA(D)=\ThA(E)$. 
\end{enumerate}
\end{thm}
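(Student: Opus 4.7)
The plan is to reduce both statements to (the reverse direction of) (1), using the structural results already established for strongly self-absorbing algebras together with the model-theoretic principle that, for a countably saturated $C$ and a separable $E$, a unital embedding $E \hookrightarrow C$ exists if and only if $\ThA(C) \subseteq \ThA(E)$ (the relevant version for unital \cstar-algebras, with $1$ in the language, is the content of \cite{FaHaSh:Model3}).

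For the forward direction of (1), assume $D$ is $E$-absorbing, so $D \cong D \otimes E$. Since $E$ embeds unitally into $D \otimes E$ via $e \mapsto 1_D \otimes e$, and universal theory is anti-monotone under unital inclusions, we get $\ThA(D) = \ThA(D \otimes E) \subseteq \ThA(E)$.

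For the reverse direction of (1), assume $\ThA(D) \subseteq \ThA(E)$. Fix a nonprincipal ultrafilter $\cU$ on $\bbN$ and set $C := D' \cap D^{\cU}$. Since $D$ is strongly self-absorbing, it is (trivially) potentially $D$-absorbing, and $D^\cU$ is countably saturated, so by Theorem~\ref{T-1} (equivalently Corollary~\ref{C.saturated} combined with Theorem~\ref{T1}), $C$ is countably saturated and $C \prec D^{\cU}$. By \L{}o\'s' theorem $D^{\cU} \equiv D$, hence $\ThA(C) = \ThA(D) \subseteq \ThA(E)$. By the embedding principle recalled above, $E$ admits a unital embedding into $C = D' \cap D^{\cU}$. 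Since $E$ has approximately inner half-flip, Theorem~\ref{T.RC}(1) then gives that $D$ is $E$-absorbing.

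For (2), the forward direction is trivial. For the reverse direction, apply (1) symmetrically: if $\ThA(D) = \ThA(E)$, then $D$ is $E$-absorbing and $E$ is $D$-absorbing, so $D \cong D \otimes E \cong E \otimes D \cong E$.

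The one step to be careful about is the appeal to \cite{FaHaSh:Model3}: we need the unital version of ``universal theory inclusion yields embedding into a countably saturated model.'' This holds because $C$ is unital, the language of unital \cstar-algebras contains $1$ as a constant, and universal sentences in this expanded language control unital embeddings; alternatively, one can realize the unit-preserving type of $E$ over the empty set directly in the countably saturated $C$. Beyond this point the argument is essentially a bookkeeping exercise using Theorem~\ref{T.RC}, Theorem~\ref{T-1}, and \L{}o\'s' theorem.
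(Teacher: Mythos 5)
Your proposal is correct and follows essentially the same route as the paper: both reduce everything to the converse of (1), use $\ThA(D)=\ThA(D'\cap D^{\cU})$ together with countable saturation of $D'\cap D^{\cU}$ and the embedding criterion of \cite{FaHaSh:Model3} to embed $E$ into $D'\cap D^{\cU}$, and then conclude via Theorem~\ref{T.RC}, with (2) obtained by applying (1) twice. Your extra care about the unital version of the embedding criterion is a reasonable refinement of a point the paper passes over silently.
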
 
 
 \begin{proof} \eqref{T.Class.1} 
 Only the converse implication requires a proof.
Assume  $\ThA(D)\subseteq \ThA(E)$.
Since $D \equiv D'\cap D^{\cU}$, we have $\ThA(D) = \ThA(D'\cap D^{\cU})$. 
Since $D'\cap D^{\cU}$ is countably saturated, $E$ embeds into 
   $D'\cap D^{\cU}$ and therefore $D\otimes E\cong D$ by Theorem~\ref{T.RC}. 
 
 \eqref{T.Class.2} Again only the converse implication requires a proof. 
 By using \eqref{T.Class.1} twice we have that $D\cong D\otimes E\cong E$. 
 \end{proof} 
 
 We record a consequence on the classification problem for strongly self-absorbing algebras  (see \cite{FaToTo:Descriptive} for the definitions). 
While the isomorphism relation 
for separable \cstar-algebras (and even separable AI algebras) is not classifiable by countable structures (\cite{FaToTo:Turbulence}), 
our result shows that the isomorphism relation for strongly self-absorbing algebras is much simpler. 
Since the computation of a theory of a \cstar-algebra is given by a Borel function (\cite{FaToTo:Descriptive}) 
 the following is an immediate consequence of Theorem~\ref{T.Class}. 
 
 \begin{coro} The isomorphism relation of strongly self-absorbing algebras is smooth. \qed
 \end{coro}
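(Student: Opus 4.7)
The plan is to produce a Borel reduction from the isomorphism relation on strongly self-absorbing \cstar-algebras to the equality relation on a standard Borel space, which is precisely the definition of smoothness. The target space will be the space of universal theories, viewed as a subspace of $\bbR^{\bbF^0_\forall}$ (where $\bbF^0_\forall$ denotes the countable set of universal sentences), identifying a theory with the linear functional $\phi \mapsto \phi^A$ restricted to universal sentences.

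First I would recall that \cite{FaToTo:Descriptive} provides a standard Borel space parametrizing separable \cstar-algebras, and shows that for each sentence $\phi$ the evaluation map $A \mapsto \phi^A$ is Borel. Hence the assignment $A \mapsto \ThA(A)$ is a Borel map from the space of (codes for) separable \cstar-algebras into $\bbR^{\bbF^0_\forall}$. Restricting this Borel map to the Borel subset consisting of (codes for) strongly self-absorbing algebras yields a Borel function $\Theta$.

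Next I would invoke Theorem~\ref{T.Class}\eqref{T.Class.2}: for strongly self-absorbing $D$ and $E$, we have $D \cong E$ if and only if $\ThA(D) = \ThA(E)$. Thus $\Theta$ is a Borel reduction of isomorphism on strongly self-absorbing algebras to equality on $\bbR^{\bbF^0_\forall}$, and equality on a standard Borel space is smooth by definition.

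There is essentially no obstacle here beyond citing the Borel measurability of the theory map from \cite{FaToTo:Descriptive}; the entire content of the corollary is packaged inside Theorem~\ref{T.Class}. The only point requiring a small check is that the set of codes for strongly self-absorbing algebras is itself Borel, but this also follows from the Borel measurability of $\phi \mapsto \phi^A$ applied to sentences axiomatizing the property ``$D \otimes D \cong D$ with approximately inner flip,'' which can be expressed by a countable scheme as in \S\ref{S.TD}.
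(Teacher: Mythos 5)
Your proposal is correct and follows essentially the same route as the paper: the paper likewise deduces the corollary immediately from Theorem~\ref{T.Class}\eqref{T.Class.2} together with the Borel measurability of the map $A \mapsto \Th(A)$ from \cite{FaToTo:Descriptive}, which gives a Borel reduction of isomorphism of strongly self-absorbing algebras to equality of universal theories. Your additional remark on Borelness of the set of strongly self-absorbing codes is a reasonable technical footnote but not something the paper dwells on.
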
 
 
On the other hand, \cstar-algebras with approximately inner half-flip behave much differently: 
Elementarily equivalent separable \cstar-algebras with approximately inner half-flip 
are not necessarily isomorphic, the isomorphism relation of these \cstar-algebras is not smooth, 
and they are not necessarily atomic models of their theories 
(Example~\ref{Ex.aihf}).

\section{Limiting examples} 
\label{S.L1}

Examples promised earlier on are collected in  this concluding section. 
Most interesting examples involve nontrivial properties of Kirchberg algebras 
reviewed in  \S\ref{Rm:Kirchberg}.

Examples~\ref{Ex.Cantor} and 
\ref{Ex.O3}  show two directions in which  Theorem~\ref{T.ssa} cannot be improved. 
  
\begin{example}\label{Ex.Cantor}
A separable unital nuclear \cstar-algebra $A$ such that $A\cong A\otimes A$ and
the images of 
all unital $^\ast$-homomorphisms of $A$ into 
$A^{\cU}$ are conjugate by an automorphism of $A^{\cU}$, but $A$ does not have approximately inner half-flip. 

Take $A$ to be $C(\{0,1\}^{\bbN})$, the \cstar-algebra of continuous functions on the Cantor space. 
Then $A\otimes A\cong A$ since $\{0,1\}^{\bbN}$ is homeomorphic to its own square. 
Since the theory of $A$ allows elimimation of quantifiers (\cite{EaVi:Saturation}) every embedding of $A$ into a model of its theory
 is elementary and 
therefore the standard back-and-forth argument shows that any two such embeddings into a saturated model are conjugate. 

If one considers $A$, a UHF algebra not of infinite type, then we have that every unital $^\ast$-homomorphism of $A$ into $A^\cU$ is conjugate by a unitary but $A \not \equiv A \otimes A$.
\end{example} 

The following example is well-known but is included for completeness. 

\begin{example} \label{Ex.Z0} 
A free filter $\cF$ on $\bbN$ 
such that  $\asymptot A\cF$ is 
not countably quantifier-free saturated (and therefore fails Kirchberg's $\e$-test, \cite[Lemma~3.1]{KirRo:Central})  for any unital, simple \cstar-algebra $A$.  
 
Recall that $\cZ_0$ is the ideal of sets of asymptotic density zero, 
\[
\cZ_0:=\{X\subseteq \bbN:  \lim_n |X\cap \{0, \dots, n-1\}|/n=0\}. 
\]
Define the upper density on $\cP(\bbN)$ by 
\[
d(X):=\limsup_n |X\cap \{0, \dots, n-1\}|/n. 
\]
 Then $\cZ_0$ is the ideal of sets of asymptotic density 
zero and the dual filter is equal to 
\[
\cF_0=\{X: d(\bbN\setminus X)=0\}. 
\]
The following is an elaboration of the  well-known fact 
that the quotient Boolean algebra $\cP(\bbN)/\cZ_0$ is not countably 
saturated.

Let $X_n=\{ j 2^n:  j\in \bbN\}$. Then $d(X_n)=2^{-n}$ and $X_n\supseteq X_{n+1}$. 
If $Y\subseteq \bbN$ is such that $Y\setminus X\in \cZ_0$ then $d(Y)=0$ and therefore $\bbN\setminus Y\in \cF_0$. 
Let $p_n$ be the projection in $\asymptot A\cF$ 
whose representing sequence satisfies $p_n(j)=1$  if $j\in X_n$ and $p_n(j)=0$ if $j\notin  X_n$.  
Then $p_n$, for $n\in \bbN$, is  a strictly decreasing sequence of projections in $\asymptot A\cF$ and 
the type of a nonzero projection $q$ such that $q\leq p_n$ for all 
$n$ is consistent, but not realized, in $\asymptot A\cF$. 
\end{example}

The argument in Example~\ref{Ex.Z0} shows that $\asymptot A{\cF}$ is not 
 countably degree-1 saturated (see \cite{FaHa:Countable}) and  even not SAW* (\cite{Pede:Corona}). 
No example of a \cstar-algebra which is countably quantifier free saturated
but not countably  saturated is known (see~\cite{EaVi:Saturation}). 
The Calkin algebra is an example of an \cstar-algebra that is countably degree-1 saturated 
but not quantifier-free saturated (\cite[\S 4]{FaHa:Countable}).

\subsection{Kirchberg algebras} \label{Rm:Kirchberg}
We review some facts on Kirchberg algebras. More details, including an exposition of Kirchberg and Phillips' classification result, can be found in e.g.,  \cite{Ror:Classification}. 
A separable, simple, purely infinite and nuclear \cstar-algebra is called a \emph{Kirchberg algebra}. Phillips and Kirchberg have classified all Kirchberg algebras up to $KK$-theory, and unital Kirchberg algebras in the so-called UCT class are classified by their $K$-groups together with the position of the unit in $K_0$. To each pair $(G_0,G_1)$ of countable abelian groups there is a Kirchberg algebra $A$ in the UCT class with $K_0(A) \cong G_0$ and $K_1(A) \cong G_1$. 

The classification theorem contains the following fact  (see {\cite[Theorem~4.4.1]{Phi:Classification} or  \cite[Theorem 8.2.1]{Ror:Classification}}) that we shall use.
The last statement of the theorem follows using \cite[Remark 2.4.8]{Ror:Classification}. 

\begin{thm}[Kirchberg, Phillips]
\label{KP:uniqueness}
If $A$ and $B$ are unital Kirchberg algebras and if $\varphi, \psi \colon A \to B$ are unital $^\ast$-homomorphisms, then $\varphi$ is asymptotically unitarily equivalent to $\psi$ if and only if $KK(\varphi) = KK(\psi)$ in $KK(A,B)$.  If $A$ and $B$ are in the UCT class and with finitely generated $K$-groups, then this, in turn, is equivalent to $\varphi$ and $\psi$ being approximately unitarily equivalent. 
\end{thm}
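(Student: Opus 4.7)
The plan is to establish the two equivalences separately, with the forward (necessity) implication being comparatively easy and the converse (sufficiency) being the substantive content. For the forward direction, an asymptotic unitary equivalence $u_t$ with $t \in [1,\infty)$ from $\varphi$ to $\psi$ yields a *-homomorphism $A \to C_b([1,\infty), B)$ sending $a \mapsto (\Ad(u_t)\circ \varphi)(a)$; composing with evaluations at $t=1$ and the quotient by $C_0([1,\infty),B)$ exhibits $\varphi$ and $\psi$ as cobordant through asymptotic morphisms, which by homotopy invariance of $KK$ forces $KK(\varphi) = KK(\psi)$. The analogous argument for approximate unitary equivalence (in the UCT/finitely generated case) is even more direct, as approximate unitary equivalence preserves $KK$-classes immediately.

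For the converse under the weaker hypothesis, the strategy follows Kirchberg and Phillips. First I would reduce to the $\mathcal{O}_\infty$-stable setting using Kirchberg's absorption theorem $B\cong B\otimes \mathcal{O}_\infty$: under this isomorphism, any unital $^\ast$-homomorphism $\varphi\colon A\to B$ is approximately unitarily equivalent to $\varphi\otimes \id_{\mathcal{O}_\infty}$, which gives substantial freedom to manipulate images via absorption. Second, I would separate the problem into an \emph{existence} statement (every $\alpha \in KK(A,B)$ compatible with the class of the unit arises as $KK(\varphi)$ for some unital $\varphi$) and a \emph{uniqueness} statement (if $KK(\varphi) = KK(\psi)$ then $\varphi$ and $\psi$ are asymptotically unitarily equivalent). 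The uniqueness part is the heart: starting from the Cuntz pair picture of $KK$-classes, the hypothesis $KK(\varphi)=KK(\psi)$ produces a Cuntz pair representing zero, which via $\mathcal{O}_\infty$-stability and the richness of purely infinite simple algebras can be conjugated to the trivial pair by a path of unitaries, yielding the required asymptotic equivalence.

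For the strengthening in the UCT-plus-finitely-generated case, I would use the UCT short exact sequence
\[ 0 \to \mathrm{Ext}^1_{\bbZ}(K_*(A), K_{*+1}(B)) \to KK(A,B) \to \mathrm{Hom}(K_*(A), K_*(B)) \to 0, \]
noting that finite generation of $K_*(A)$ allows one to detect equality of $KK$-classes from data on finitely many generators. This finiteness lets one pass from a continuous path of unitaries $(u_t)_{t\geq 1}$ implementing asymptotic equivalence to a sequence $(u_n)_{n\in\bbN}$ implementing approximate equivalence, since only finitely many $K$-theoretic obstructions need to be tracked at each stage; combined with the discrete rotation argument of \cite{lin2012approximate}, one upgrades asymptotic to approximate unitary equivalence.

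The main obstacle is the uniqueness theorem in the converse direction: this is the deepest part of the Kirchberg–Phillips program and genuinely requires the full strength of $\mathcal{O}_\infty$-absorption, the Cuntz pair representation of $KK$, and delicate approximation arguments exploiting pure infiniteness. By comparison, the existence statement reduces (after absorbing $\mathcal{O}_\infty$) to producing asymptotic morphisms with prescribed $K$-theoretic data and then straightening them to genuine $^\ast$-homomorphisms using that $B$ is Kirchberg, while the UCT/finite generation upgrade is a relatively formal consequence once uniqueness is in hand.
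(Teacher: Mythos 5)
The paper does not prove Theorem~\ref{KP:uniqueness} at all: it is quoted from \cite[Theorem~4.4.1]{Phi:Classification} and \cite[Theorem 8.2.1]{Ror:Classification}, with the final sentence deduced from \cite[Remark 2.4.8]{Ror:Classification}. So the only fair comparison is whether your sketch would stand as a proof on its own, and for the main equivalence it would not. Your forward direction (asymptotic unitary equivalence implies $KK(\varphi)=KK(\psi)$, via homotopy invariance and triviality of inner conjugation on $KK$) is fine in outline. But the converse is a roadmap of the Kirchberg--Phillips program rather than an argument: the step you describe as ``the Cuntz pair representing zero can be conjugated to the trivial pair by a path of unitaries'' \emph{is} the uniqueness theorem, and you defer it explicitly. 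That is legitimate only if, like the paper, you treat the result as a citation; as a proof it is the gap. (Also, the existence half of your decomposition is not needed for the statement at hand, which is purely a uniqueness assertion.)

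The refinement in the last sentence --- the one point on which the paper does say something --- is handled incorrectly. Approximate unitary equivalence does \emph{not} preserve $KK$-classes ``immediately'': a point-norm limit of unitary conjugates only fixes the image of $KK(\varphi)$ in $KL(A,B)=KK(A,B)/\mathrm{Pext}(K_*(A),K_{*+1}(B))$, and the hypotheses ``UCT and finitely generated $K$-groups'' are present precisely because $\mathrm{Pext}$ vanishes when $K_*(A)$ is finitely generated, so that $KL=KK$ (equivalently, one can argue via the universal multi-coefficient theorem and total $K$-theory). Relatedly, you attach finite generation to the wrong implication: passing from a continuous path $(u_t)$ to a sequence $(u_n)$ (asymptotic implies approximate) is trivial and needs no hypotheses, whereas the substantive direction is that approximate unitary equivalence forces $KK(\varphi)=KK(\psi)$, and that is where finite generation is consumed. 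Finally, equality of the induced maps on $K_*$ on finitely many generators does not determine the $KK$-class even for finitely generated groups, since the $\mathrm{Ext}^1$ term in the UCT sequence need not vanish; so the mechanism you propose (``detect equality of $KK$-classes from data on finitely many generators'') does not establish the claim that the paper extracts from \cite[Remark 2.4.8]{Ror:Classification}.
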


A Kirchberg algebra $A$ is said to be in \emph{standard form} if $A$ is unital and $[1_A]=0$ in $K_0(A)$. Every Kirchberg algebra is Morita (or stably) equivalent to a Kirchberg algebra in standard form, and the standard form is unique up to isomorphism. Any \cstar-algebra which is Morita equivalent to a Kirchberg algebra $A$ and which is in standard form is isomorphic to $p(A \otimes K)p$ for some projection $p \in A \otimes K$ with $[p]=0$ in $K_0(A)$ (and such a projection $p$ always exists, in fact in $A$). The uniqueness of the standard from is deduced from the elementary fact that if $p$ and $q$ are non-zero projections in $A \otimes K$ with $[p]=[q]$ in $K_0(A)$, then~$p \sim q$. 

It is a well-known fact that $A$ is in standard form if and only if the Cuntz algebra $\mathcal{O}_2$ embeds unitally into $A$. Indeed, ``if'' follows from the fact that $K_0(\mathcal{O}_2)= 0$. Conversely, if $A$ is a Kirchberg algebra in standard form, then $2[1_A]=[1_A]$, which implies that $1_A = p+q$ for some non-zero projections $p,q \in A$ with $p \sim q \sim 1_A$ (again using that $[p]=[q]$ implies $p \sim q$).
Let $s_1,s_2 \in A$ be such that $s_1^*s_1 = s_2^*s_2=1_A$, $s_1s_1^* = p$ and $s_2s_2^*=q$. Then $C^*(s_1,s_2)$ is a unital sub-\cstar-algebra of $A$ which is isomorphic to $\mathcal{O}_2$.

%
%

\begin{lemma} \label{L.Standard} 
Every Kirchberg algebra $D$ in standard form 
has approximately inner half-flip and satisfies  $D^{\otimes \infty}\cong \cO_2$. 
\end{lemma}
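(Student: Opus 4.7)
The plan is to prove the two assertions separately. For the approximately inner half-flip I invoke the Kirchberg--Phillips uniqueness theorem (Theorem~\ref{KP:uniqueness}), which reduces the question to an equality of $KK$-classes. For $D^{\otimes\infty} \cong \cO_2$ I combine strong self-absorption of $D^{\otimes\infty}$ with Kirchberg's $\cO_2$-absorption theorem and Theorem~\ref{T.RC}.

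For the half-flip, set $\varphi_1(d) := d \otimes 1_D$ and $\varphi_2(d) := 1_D \otimes d$; both are unital $^\ast$-homomorphisms between the unital Kirchberg algebras $D$ and $D\otimes D$ (simplicity and pure infiniteness of the tensor product follow from those of $D$ together with nuclearity). By Theorem~\ref{KP:uniqueness}, asymptotic unitary equivalence reduces to showing $KK(\varphi_1) = KK(\varphi_2)$. Writing $\varphi_1 = \id_D \otimes \iota$ with $\iota\colon \bbC \to D$ the unital inclusion, functoriality of the external Kasparov product gives $KK(\varphi_1) = KK(\id_D) \boxtimes KK(\iota)$ in $KK(D\otimes \bbC, D\otimes D) = KK(D, D\otimes D)$. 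Since $D$ is in standard form, $KK(\iota) = [1_D] = 0$ in $K_0(D) = KK(\bbC, D)$, so by bilinearity of the external product $KK(\varphi_1) = 0$; the symmetric computation yields $KK(\varphi_2) = 0$. Hence $\varphi_1$ and $\varphi_2$ are asymptotically, and a fortiori approximately, unitarily equivalent.

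For the identification $D^{\otimes\infty} \cong \cO_2$, first observe that $D^{\otimes\infty}$ is a unital separable nuclear simple purely infinite \cstar-algebra, hence itself a Kirchberg algebra, and is strongly self-absorbing by \cite[Proposition~1.9]{ToWi:Strongly} since $D$ has approximately inner half-flip. Iterating the external-product vanishing from the previous paragraph gives $[1_{D^{\otimes n}}] = 0$ for each $n$, so $[1_{D^{\otimes\infty}}] = 0$; thus $D^{\otimes\infty}$ is itself in standard form and $\cO_2$ embeds unitally into it. Strong self-absorption of $D^{\otimes\infty}$ supplies a unital embedding $D^{\otimes\infty} \hookrightarrow (D^{\otimes\infty})' \cap (D^{\otimes\infty})^{\cU}$, which when composed with a unital embedding $\cO_2 \hookrightarrow D^{\otimes\infty}$ yields a unital embedding of $\cO_2$ into the relative commutant. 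Theorem~\ref{T.RC} (applied with $\cO_2$ playing the role of $D$ and $D^{\otimes\infty}$ playing the role of $A$) then gives $D^{\otimes\infty} \otimes \cO_2 \cong D^{\otimes\infty}$. On the other hand, Kirchberg's $\cO_2$-absorption theorem provides $D^{\otimes\infty} \otimes \cO_2 \cong \cO_2$ (since $D^{\otimes\infty}$ is a unital separable simple nuclear \cstar-algebra), and combining these isomorphisms yields $D^{\otimes\infty} \cong \cO_2$.

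I expect the main obstacle to be the clean handling of the external Kasparov product: I must argue carefully that the vanishing of $[1_D]$ in $K_0(D)$ really does force $KK(\varphi_1) = KK(\varphi_2) = 0$, and I must verify that Theorem~\ref{KP:uniqueness} produces asymptotic unitary equivalence (hence approximate unitary equivalence) without needing the UCT or finite generation hypotheses. A secondary concern is the invocation of Kirchberg's $\cO_2$-absorption theorem, which must be cited in the UCT-free form valid for unital separable simple nuclear \cstar-algebras.
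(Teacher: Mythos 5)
Your argument is correct, but both halves take a genuinely different route from the paper's. For the half-flip, the paper also reduces matters to $KK(\varphi_1)=KK(\varphi_2)$ via Theorem~\ref{KP:uniqueness}, but instead of computing with the external Kasparov product it factors the two maps through $\cO_2$: since $D$ is in standard form, $\cO_2$ embeds unitally into $D$, so $d\mapsto d\otimes 1$ and $d\mapsto 1\otimes d$ factor as $D\to D\otimes\cO_2\to D\otimes D$ and $D\to\cO_2\otimes D\to D\otimes D$, and $D\otimes\cO_2\cong\cO_2$ together with $KK(\cO_2,\cO_2)=0$ forces both classes to vanish; your bilinearity computation $KK(\varphi_i)=KK(\id_D)\boxtimes[1_D]=0$ reaches the same conclusion more directly, and you are right that only the UCT-free half of Theorem~\ref{KP:uniqueness} is needed, asymptotic unitary equivalence being stronger than approximate unitary equivalence (the paper uses it the same way). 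For $D^{\otimes\infty}\cong\cO_2$ the paper is more elementary: it writes $D^{\otimes\infty}$ as the limit of $D\to D\otimes\cO_2\to D\otimes D\to D\otimes D\otimes\cO_2\to\cdots$, observes that every other term is $\cO_2$ (again by \cite[Theorem~3.8]{KircPhi:Embedding}), and concludes that $D^{\otimes\infty}$ is an inductive limit of copies of $\cO_2$, hence isomorphic to $\cO_2$; your route instead gets strong self-absorption of $D^{\otimes\infty}$ from \cite[Proposition~1.9]{ToWi:Strongly}, places a unital copy of $\cO_2$ in the central sequence algebra, and combines Theorem~\ref{T.RC}(1) with Kirchberg's $A\otimes\cO_2\cong\cO_2$ theorem. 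Both proofs ultimately rest on Kirchberg's $\cO_2$-absorption; the paper's telescoping argument avoids the central-sequence machinery and the appeal to \cite{ToWi:Strongly}, while yours trades the inductive-limit step for tools already set up earlier in the paper. Two small streamlinings of your second half: the unital copy of $\cO_2$ in $D^{\otimes\infty}$ is obtained at once by composing $\cO_2\hookrightarrow D\hookrightarrow D^{\otimes\infty}$, with no need to check that $D^{\otimes\infty}$ is a Kirchberg algebra in standard form; and you should say explicitly that $D^{\otimes\infty}$ is separable, simple, unital and nuclear (being a unital inductive limit of such algebras) so that Kirchberg's theorem and Theorem~\ref{T.RC} apply to it.
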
 

\begin{proof} We must show that the $^\ast$-homomorphisms $\alpha(d)=d\otimes 1_D$ and $\beta(d)=1_D\otimes d$, $d \in D$, from $D$ to $D\otimes D$ are approximately unitarily equivalent.  Since $D$ and $D \otimes D$ are Kirchberg algebras we can use the Kirchberg--Phillips' classification theorem (Theorem \ref{KP:uniqueness}) whereby it suffices to show that $KK(\alpha) = KK(\beta)$ in $KK(D, D\otimes D)$.
Since $D$ is in standard form there is a unital embedding $\mathcal{O}_2 \to D$. We can therefore factor $\alpha$ and $\beta$ through $\mathcal{O}_2$ as follows
$$D \to D \otimes {\mathcal{O}}_2 \to D \otimes D, \qquad D \to {\mathcal{O}}_2  \otimes D  \to D \otimes D,$$
where we recall that $D \otimes  {\mathcal{O}}_2 \cong  {\mathcal{O}}_2 \otimes D \cong  {\mathcal{O}}_2$ by \cite[Theorem~3.8]{KircPhi:Embedding}. As $KK(\mathcal{O}_2,\mathcal{O}_2) = 0$, this implies that $KK(\alpha) = KK(\beta) = 0$.


As for the second claim, arguing as above, we can write $D^{\otimes \infty}$ as the inductive limit of the sequence:
$$D \to D \otimes \mathcal{O}_2  \to D \otimes D \to D \otimes D \otimes {\mathcal{O}}_2 \to D \otimes D \otimes D \to \cdots.$$
Every other \cstar-algebra in this sequence is isomorphic to $\mathcal{O}_2$ (by  \cite[Theorem~3.8]{KircPhi:Embedding}), whence $D^{\otimes \infty}$ is an inductive limit of a sequence of copies of $\mathcal{O}_2$; and the inductive limit of such a sequence is isomorphic to $\mathcal{O}_2$ (by the Kirchberg-Phillips' classification or by \cite{Ror:Classification1993}). 
%
%
\end{proof}


It is known that $\mathcal{O}_2$ and $\mathcal{O}_\infty$ are strongly self-absorbing (see e.g., \cite{Ror:Classification}), so in particular they have approximately inner flip.

For the reader's convenience we reproduce the following well-known result due to Cuntz (\cite{Cuntz:Kingston}) and sketch its proof. 

\begin{prop} \label{P.half-flip-O-n} The Cuntz algebras $\mathcal{O}_n$ do not have approximately inner half-flip when $2 < n < \infty$. 
\end{prop}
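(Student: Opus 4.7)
My plan is to proceed by contradiction: suppose $\mathcal{O}_n$ has approximately inner half-flip for some $n$ with $2 < n < \infty$. Then by the Toms--Winter result cited just before Theorem~\ref{T0}, $E := \mathcal{O}_n^{\otimes \infty}$ is strongly self-absorbing. I will derive a contradiction from this via a K\"unneth computation.

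If $E$ is strongly self-absorbing, then the first-factor embedding $E \to E \otimes E$, $a \mapsto a \otimes 1$, is approximately unitarily equivalent to an isomorphism, and hence induces an isomorphism on $K_0$. The image of this induced map lies in the $K_0(E) \otimes K_0(E)$ subgroup of $K_0(E \otimes E)$ coming from the tensor part of the K\"unneth short exact sequence, so surjectivity forces $K_1(E) \otimes K_1(E) = 0$ (another direct summand of the tensor part, with trivial intersection with $K_0(E) \otimes K_0(E)$). But $K_0(\mathcal{O}_n) \cong \mathbb{Z}/(n-1)$ and $K_1(\mathcal{O}_n) = 0$, so K\"unneth gives $K_1(\mathcal{O}_n \otimes \mathcal{O}_n) \cong \mathrm{Tor}(\mathbb{Z}/(n-1), \mathbb{Z}/(n-1)) \cong \mathbb{Z}/(n-1)$, which is nonzero precisely because $n - 1 > 1$. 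Iterating K\"unneth at each stage $k$, $K_1(\mathcal{O}_n^{\otimes k})$ is a nonzero $(n-1)$-torsion abelian group for every $k \geq 2$, and the transition maps are injective (arising as split inclusions into the tensor part at the next stage). Since tensor commutes with inductive limits, $K_1(E) \otimes K_1(E) = \varinjlim K_1(\mathcal{O}_n^{\otimes k}) \otimes K_1(\mathcal{O}_n^{\otimes k})$ is a directed union of nonzero groups with injective transitions, hence nonzero. Contradiction.

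The main obstacle is the K\"unneth bookkeeping: verifying that $K_1(E) \otimes K_1(E)$ genuinely appears as a subgroup of $K_0(E \otimes E)$ that intersects trivially with the image of the $K_0$-embedding. This rests on the fact that the tensor part of the K\"unneth short exact sequence is an honest subgroup $K_0(E) \otimes K_0(E) \oplus K_1(E) \otimes K_1(E) \subseteq K_0(E \otimes E)$. An alternative route bypassing the infinite tensor product is to apply Theorem~\ref{KP:uniqueness} directly to the maps $\alpha = \mathrm{id} \otimes 1$ and $\beta = 1 \otimes \mathrm{id}$ from $\mathcal{O}_n$ to $\mathcal{O}_n \otimes \mathcal{O}_n$: approximate unitary equivalence would force $KK(\alpha) = KK(\beta)$, and the UCT localises the obstruction in $\mathrm{Ext}^1(K_0(\mathcal{O}_n), K_1(\mathcal{O}_n \otimes \mathcal{O}_n)) \cong \mathbb{Z}/(n-1)$, which can be computed via the flip's action as $-1$ on the $\mathrm{Tor}$-summand by graded commutativity of $\mathrm{Tor}$.
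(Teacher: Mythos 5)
Your main argument is correct, and it takes a genuinely different route from the paper's. The paper works directly with $\alpha=\id\otimes 1$ and $\beta=1\otimes\id$ into $\mathcal{O}_n\otimes\mathcal{O}_n$: it invokes R{\o}rdam's criterion that approximate unitary equivalence of $\alpha$ and $\beta$ would force the class of the canonical unitary $u=\sum_j s_j\otimes s_j^*$ into $(n-1)K_1(\mathcal{O}_n\otimes\mathcal{O}_n)$, computes $K_1(\mathcal{O}_n\otimes\mathcal{O}_n)\cong\mathbb{Z}/(n-1)\mathbb{Z}$ via the Pimsner--Voiculescu sequence for $\mathcal{O}_n\cong M_{n^\infty}\rtimes\mathbb{N}$, and shows via the index map that $[u]$ is a generator, hence nonzero. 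You instead pass to $E=\mathcal{O}_n^{\otimes\infty}$, use the Toms--Winter result that an approximately inner half-flip for $\mathcal{O}_n$ would make $E$ strongly self-absorbing, and derive a contradiction from the K\"unneth formula: surjectivity of the first-factor embedding on $K_0$ forces $K_1(E)\otimes K_1(E)=0$, while iterated K\"unneth (with $\mathrm{Tor}(\mathbb{Z}/(n-1),\mathbb{Z}/(n-1))\neq 0$) gives $K_1(E)\otimes K_1(E)\neq 0$. This buys an argument with no explicit unitary and no Pimsner--Voiculescu computation, at the cost of heavier machinery (Toms--Winter, K\"unneth and bootstrap-class membership of $E$ and $E\otimes E$, continuity of $K$-theory) and the bookkeeping you flag; that bookkeeping does go through: the K\"unneth sequence splits, the connecting map $x\mapsto x\otimes[1_{\mathcal{O}_n}]$ is an isomorphism onto the summand $K_1(\mathcal{O}_n^{\otimes k})\otimes K_0(\mathcal{O}_n)$ of the tensor part (all groups involved are $(n-1)$-torsion and $[1]$ generates $K_0(\mathcal{O}_n)$), and tensor squares of split injections remain injective, so the limit group is nonzero. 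One caution: your proposed ``alternative route'' is not justified as written --- locating the obstruction $KK(\alpha)-KK(\beta)$ in $\mathrm{Ext}^1(\mathbb{Z}/(n-1),\mathbb{Z}/(n-1))$ does not reduce it to the flip's action on $K_*$ (for $n=3$ that action on $K_1\cong\mathbb{Z}/2$ is trivial in any case), and evaluating this Ext component is essentially the computation the paper performs with $[u]$ and the index map; fortunately your main route does not depend on it.
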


\begin{proof} 
Let $\alpha, \beta \colon \mathcal{O}_n \to \mathcal{O}_n \otimes \mathcal{O}_n$ be the two canonical endomorphisms given by $\alpha(d) = d \otimes 1$ and $\beta(d) = 1 \otimes d$, $d \in \mathcal{O}_n$. The interrelations between $\alpha$ and $\beta$ are encoded in the unitary
$$
\textstyle u = \sum_{j=1}^n \alpha(s_j)\beta(s_j)^* = \sum_{j=1}^n s_j \otimes s_j^* \in \cO_n \otimes \cO_n,
$$
cf.\ \cite{Ror:Classification1993}, where $s_1, s_2, \dots, s_n$   are the canonical generators of $\mathcal{O}_n$.
It is well-known that $\alpha$ and $\beta$ are not unitarily equivalent if
$[u] \notin (n-1)K_1(\mathcal{O}_n \otimes \mathcal{O}_n)$ (see \cite[Theorem 3.6]{Ror:Classification1993}).
We proceed to prove this fact. 

With $B$ denoting the UHF algebra $M_{n^\infty}$ we can identify $\cO_n$ with the crossed product 
$B\rtimes_\rho \bbN$ where $\rho(a)= s_1 as_1^*$ (see the proof of \cite[Theorem~4.2.2]{Ror:Classification}). 
Therefore $\cO_n\otimes \cO_n$ is identified with $(\cO_n\otimes B)\rtimes_{\rho'} \bbN$ where 
$\rho'(a\otimes b):=a\otimes \rho(b)$.  
By stability and continuity of $K$-theory it is easy to see that $K_j(\cO_n\otimes B) \cong K_j(\cO_n) \otimes K_0(B)$ for $j=0,1$, and so 
$K_0(\cO_n\otimes B) \cong \bbZ/(n-1)\bbZ$
and $K_1(\cO_n\otimes B)=0$.
Therefore 
the Pimsner--Voiculescu exact sequence (\cite[Theorem~10.2.2]{blackadar1998k}) for this crossed product becomes:  
\begin{center}
\begin{tikzpicture}
  \matrix[row sep=1cm,column sep=1.3cm] 
  {
  \node(00) {$\bbZ/(n-1)\bbZ$}; & 
\node(01) {$\bbZ/(n-1)\bbZ$}; & 
\node(02) {$K_0(\cO_n\otimes \cO_n)$}; 
\\
\node(10) {$K_1(\cO_n\otimes \cO_n)$}; & 
\node(11) {0}; & 
\node(12) {0}; 
\\
};
\path (00) edge[->] node[above]{0} (01); 
\path (01) edge[->] node[above]{$K_0(\iota)$} (02);
\path (10) edge[->] node[left]{$\delta_1$}  (00); 
\path (02) edge[->] node[right]{$\delta_0$}  (12); 
\path (12) edge[->] (11)  ; 
\path (11) edge[->] (10)  ; 
\end{tikzpicture} 
\end{center}

\noindent
(with $\iota\colon \cO_n\otimes B\to \cO_n\otimes \cO_n$ denoting the inclusion map and $\delta_1$ denoting the 
index map).
By the exactness of this sequence at the bottom left corner we see that  
  $K_1(\mathcal{O}_n \otimes \mathcal{O}_n) \cong \mathbb{Z}/(n-1)\mathbb{Z}$.
Since $K_0(\cO_n\otimes B)$ is generated by the class of the unit (as in the case of $K_0(\cO_n)$), 
a (partial) unitary $w \in \cO_n \otimes \cO_n$ is a generator for $K_1(\cO_n\otimes \cO_n)$ if and only if $\delta_1([w])=[1]$. 

With $u$ as above,  
$
v:=u^*(1\otimes s_1^*)=\sum_{j=1}^ns_j^*\otimes s_j s_1^*
$
is a co-isometry in $\cO_n \otimes B$, and $u^*=v(1\otimes s_1)$. Thus, by the definition of the index map $\delta_1$ in the Pimsner--Voiculescu six term exact sequence above, one can conclude that $\delta_1([u^*])=[1]$, whence $[u]$ is a generator for $K_1(\cO_n\otimes \cO_n)$. 
In particular, $[u] \ne 0$ and  $\mathcal{O}_n$ does not have approximately inner half-flip
\end{proof}

\subsection{Limiting examples from Kirchberg algebras}
This example contains limiting examples, whose interesting properties are derived from the Kirchberg--Phillips classification of Kirchberg algebras (Theorem \ref{KP:uniqueness}).
After this article was submitted, the last-named author undertook a more thorough investigation of the relationship between approximately inner flip and $K$-theory, describing exactly which UCT-satisfying Kirchberg algebras (and which stably finite, classifiable C*-algebras) have approximately inner flip \cite{T:aifK}; that paper is largely complementary to the present examples.

It is well-known that \cstar-algebras with approximately inner flip need not be strongly self-absor\-bing.
For example, matrix algebras and 
UHF algebras  not of infinite type have this property.

\begin{example}\label{Ex.half-not-full} 
For every $m\geq 2$  there exists  a separable unital \cstar-algebra $A$ such that $A$ does not have approximately inner half-flip 
and $M_{m}(A)$ has approximately inner half-flip but not  approximately inner flip. 

Proposition~\ref{P.half-flip-O-n} implies that $\cO_n$ 
 does not have approximately inner half-flip and therefore  
$M_{n+1}(\cO_n)$  does not have approximately inner flip by    \cite[Corollary~2.6]{effros1978c}.   
 Because it is in standard form, $M_{n+1}(\cO_n)$ has approximately inner half-flip
 by Lemma \ref{L.Standard}. 
\end{example} 

\begin{example} \label{Ex.aihf}
Not every separable unital \cstar-algebra with approximately inner half-flip\ is an atomic model of its theory. 
Moreover there are elementarily equivalent separable \cstar-algebras with approximately inner half-flip\ which are not
isomorphic, and  
the isomorphism relation for separable \cstar-algebras with approximately inner half-flip is not smooth. 
(The examples here are, in addition, simple.)

Lemma~\ref{L.Standard}  implies that every Kirchberg algebra in standard form 
has approximately inner half-flip and therefore provides a large supply of Kirchberg 
algebras with approximately inner half-flip.
By 
\cite[Theorem~5.7.1]{gardella2014conjugacy}, the map that associates
$K_0$ and $K_1$ to a separable \cstar-algebra is a Borel map from the Borel space of separable 
\cstar-algebras to the Borel space of pairs of countable abelian groups (in this context, the order is trivial and we ignore the $K_0$-class of the identity). 
By the Kirchberg--Phillips classification and corresponding range of invariant 
theorem (see \cite{Ror:Classification}) this map, restricted to the category of Kirchberg algebras satisfying the UCT, is equivalence of categories. 
Since the isomorphism of countable abelian groups is not a smooth equivalence 
relation, there are elementarily equivalent but non-isomorphic UCT Kirchberg algebras in the standard 
form. Since elementarily equivalent separable atomic models are isomorphic, 
some of these \cstar-algebras are not atomic.

This proof shows that there are non-atomic \cstar-algebras with approximately inner half-flip  with trivial~$K_1$ and
torsion-free $K_0$ of rank $2$, since the isomorphism of 
torsion-free rank~2 groups is not smooth. 
\end{example} 

The argument in  Example~\ref{Ex.aihf} 
 is nonconstructive and very similar  to the proof
that there are elementarily equivalent but nonisomorphic
separable AF algebras (\cite[Theorem~3 (1)]{Mitacs2012}). 
In both cases we don't have  an explicit natural example of elementarily equivalent 
but nonisomorphic \cstar-algebras. 
We also do not know whether  elementarily equivalent 
separable \cstar-algebras with approximately inner flip are necessarily isomorphic.

\begin{example} \label{Ex.O3} A separable unital simple nuclear \cstar-algebra $A$ such that 
all unital $^\ast$-homomorphisms of $A$ into $A^{\cU}$ are unitarily conjugate but $A$ does not have approximately inner half-flip. 
(This differs from Example~\ref{Ex.Cantor} in that $A$ is simple, $A \not\cong A \otimes A$, and that $^\ast$-homomorphisms from the \cstar-algebra in Example~\ref{Ex.Cantor} to its ultraproduct are not all unitarily conjugate.)

There are many examples; we take~$\cO_3$. By the Universal Coefficient Theorem (\cite[Theorem~23.1.1]{blackadar1998k}), and since $K_1(\cO_3) = 0$, we have
$$KK(\cO_3,\cO_3) \cong \mathrm{Hom}(K_0(\cO_3),K_0(\cO_3)).$$
As $K_0(\cO_3) \cong \bbZ/2\bbZ$ and $[1]$ is a generator for $K_0(\cO_3)$ it follows that $KK(\varphi) = {\bf{1}}$ for all unital endomorphisms $\varphi$ on $\cO_3$. Hence, by the Kirchberg--Phillips classification theorem (Theorem \ref{KP:uniqueness})
any two unital endomorphisms of $\cO_3$ are approximately unitarily equivalent. 

Since $\cO_3$  is semiprojective (\cite{Black:Shape}) every $^\ast$-homomorphism
$\Phi\colon \cO_3\to \cO_3^{\cU}$ lifts to a $^\ast$-homomorphism $\tilde\Phi\colon \cO_3\to \ell_\infty(\cO_3)$. 
Fix $^\ast$-homomorphisms $\Phi_1$ and $\Phi_2$ of $\cO_3$ into $\cO_3^{\cU}$. 
By the above there is a sequence of unitaries $u_n$ in $\ell_\infty(\cO_3)$ whose images under the quotient 
map witness that $\Phi_1$ is approximately unitarily equivalent to $\Phi_2$. 
By the countable saturation we can find a single unitary $u\in \cO_3^{\cU}$ such that $\Phi_2=\Ad u\circ \Phi_1$, 
as required. 
\end{example} 
   

Our last example uses the following  version of stability of $KK$:

\begin{lemma} \label{KK-stability} Let $A$ and $B$ be \cstar-algebras and let $n \ge 1$ be an integer. It follows that there is an isomorphism
$$\rho \colon KK(A,B) \to KK(A \otimes M_n, B \otimes M_n)$$
such that $\rho(KK(\varphi)) = KK(\varphi \otimes \mathrm{id}_{n})$ for all $^\ast$-homomorphisms $\varphi \colon A \to B$.
\end{lemma}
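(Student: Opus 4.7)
The plan is to derive $\rho$ from the Morita invariance of $KK$-theory applied to a corner embedding. Let $e = e_{11} \in M_n$ be a rank-one projection, and consider the $^\ast$-homomorphisms $\iota_A \colon A \to A \otimes M_n$ and $\iota_B \colon B \to B \otimes M_n$ defined by $\iota_A(a) = a \otimes e$ and $\iota_B(b) = b \otimes e$. Each of these is a full corner embedding: the image $(1 \otimes e)(A \otimes M_n)(1 \otimes e)$ is $^\ast$-isomorphic to $A$ and is a full hereditary sub-\cstar-algebra of $A \otimes M_n$. By the Morita invariance of $KK$-theory (see e.g., \cite{blackadar1998k}), the induced classes $[\iota_A] \in KK(A, A \otimes M_n)$ and $[\iota_B] \in KK(B, B \otimes M_n)$ are $KK$-equivalences.

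Denoting the Kasparov inverses by $[\iota_A]^{-1} \in KK(A \otimes M_n, A)$ and $[\iota_B]^{-1} \in KK(B \otimes M_n, B)$, I would define
$$\rho(x) := [\iota_A]^{-1} \cdot x \cdot [\iota_B] \in KK(A \otimes M_n, B \otimes M_n), \qquad x \in KK(A, B),$$
where $\cdot$ denotes the Kasparov product. Since this product is associative and bilinear, $\rho$ is a well-defined group homomorphism, with two-sided inverse given by the analogous formula $y \mapsto [\iota_A] \cdot y \cdot [\iota_B]^{-1}$; hence $\rho$ is an isomorphism.

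For the naturality claim, fix a $^\ast$-homomorphism $\varphi \colon A \to B$ and observe that
$$(\varphi \otimes \mathrm{id}_{M_n}) \circ \iota_A = \iota_B \circ \varphi,$$
because both sides send $a \in A$ to $\varphi(a) \otimes e$. Applying the functorial assignment $\psi \mapsto KK(\psi)$ and using compatibility of composition with the Kasparov product, this yields the identity $[\iota_A] \cdot [\varphi \otimes \mathrm{id}_{M_n}] = [\varphi] \cdot [\iota_B]$ in $KK(A, B \otimes M_n)$. Multiplying on the left by $[\iota_A]^{-1}$ gives $\rho(KK(\varphi)) = KK(\varphi \otimes \mathrm{id}_{M_n})$, as required. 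The only substantive input is the invocation of Morita invariance to see that each corner embedding $\iota_A, \iota_B$ is a $KK$-equivalence; everything else is formal manipulation of the Kasparov product, so no genuine obstacle is expected.
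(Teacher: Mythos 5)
Your proposal is correct and follows essentially the same route as the paper: both define $\rho(x) = KK(\iota_{A,n})^{-1}\cdot x\cdot KK(\iota_{B,n})$ using the corner embeddings $a\mapsto a\otimes e_{11}$, invoke stability (Morita invariance) of $KK$ to see these are $KK$-equivalences, and deduce the naturality from the intertwining relation $(\varphi\otimes\mathrm{id}_n)\circ\iota_{A,n}=\iota_{B,n}\circ\varphi$ together with functoriality of the Kasparov product. No gaps; your version of the intertwining identity is in fact stated more carefully than in the paper.
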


\begin{proof} For each \cstar-algebra $D$, let $\iota_{D,n} \colon D \to D \otimes M_n$ denote the canonical inclusion $\iota_{D,n}(d) = d \otimes e_{11}$. Then $KK(\iota_{D,n})$ defines an invertible element of $KK(D,D \otimes M_n)$ by stability of $KK$-theory. Hence the map
$$\rho \colon KK(A,B) \to KK(A \otimes M_n, B \otimes M_n), \qquad x \mapsto KK(\iota_{A,n})^{-1} \cdot x \cdot KK(\iota_{B,n}), $$
where ``$\cdot$'' denotes the Kasparov product, is an isomorphism. Inspection shows that $\iota_{A,n} \circ \varphi = (\varphi \otimes \mathrm{id}_{n}) \circ \iota_{B,n}$ for each $^\ast$-homomorphism $\varphi \colon A \to B$, which in particular implies that
$$KK(\iota_{A,n}) \cdot KK(\varphi) = KK(\varphi \otimes \mathrm{id}_{n}) \cdot KK(\iota_{B,n}).$$
 This completes the proof.
\end{proof}

\begin{example} \label{Ex.O-infty} A separable
 \cstar-algebra $D$ with approximately inner flip such that $D\otimes D\cong D$ 
but there exists an automorphism $\phi$ of $D$ that is not approximately inner.
In particular,
\begin{enumerate}
\item \label{Ex.O.3} not all unital $^\ast$-homomorphisms of $D$ into $D^{\cU}$ are unitarily conjugate,
\item \label{Ex.O.4} $D^{\cU}$ is not \pDa, 
\item \label{Ex.O.1} $D$ does not embed into $D'\cap D^{\cU}$ (and therefore $D$ does not satisfy~$\bbT_D$), 
\item \label{Ex.O.2} $D$ is not s.s.a, and
\item \label{Ex.O.6} although $\phi$ and $\id_D$ are not approximately unitarily equivalent, their compositions with the first-factor inclusion $D \to D \otimes D$ are approximately unitarily equivalent.
\end{enumerate}
Let $D$ be the standard form of $\cO_\infty$, that is $D=p\cO_\infty p$ where $p$ is a projection in $\cO_\infty$ with $[p]=0$ in $K_0(\cO_\infty)$. Then $D$ has approximately inner half-flip by  Lemma~\ref{L.Standard}. 
One could show, using stability of
$KK$ as in Lemma~\ref{KK-stability}, that this \cstar-algebra even has approximately inner flip. 
As $D$ is stably isomorphic to $\cO_\infty$ we see that $D \otimes D$ is stably isomorphic to $\cO_\infty \otimes \cO_\infty$ which again is isomorphic to $\cO_\infty$. Moreover, $D \otimes D$ is in standard form (because $D$ and hence also $D \otimes D$ admit unital embeddings of $\cO_2$). This entails that $D \otimes D \cong D$. 
%
%

%

%

There is an automorphism of $D$ which reverses $K_0$.
It cannot be approximately inner, because approximately inner maps agree with the identity on $K$-theory.

Clause \eqref{Ex.O.4} now follows from Lemma~\ref{L.flip} and then \eqref{Ex.O.1} follows from Proposition~\ref{P.1}.
\eqref{Ex.O.2} follows from Theorem \ref{T.RC}.

To see \eqref{Ex.O.6}, note that $[\phi(p) \otimes 1_D] = 0 = [d \otimes 1_D]$ for all $d\in D$.
Hence, these two maps from $D$ to $D \otimes D$ agree on $K$-theory.
By the Universal Coefficient Theorem and since $K_1(\cO_\infty)=0$,
\[ KK(\cO_\infty,\cO_\infty) \cong \mathrm{Hom}(K_0(\cO_\infty),K_0(\cO_\infty)). \]
Hence, by the Kirchberg--Phillips classification theorem (Theorem \ref{KP:uniqueness}), these two maps are approximately unitarily equivalent.


\end{example}

\bibliographystyle{plain}
\bibliography{ssa}

\end{document}